\numberwithin{equation}{section}
\newtheorem{theorem}{Theorem}[section]
\newtheorem{lemma}[theorem]{Lemma}
\newtheorem{proposition}[theorem]{Proposition}
\newtheorem{corollary}[theorem]{Corollary}
\theoremstyle{definition}
\newtheorem{example}[theorem]{Example}
\newtheorem{remark}[theorem]{Remark}
\def\E{{\mathbb E}}
\def\R{{\mathbb R}}
\def\N{{\mathbb N}}
\def\PP{{\mathbb P}}
\def\FF{{\mathbb F}}
\def\P{{\mathcal P}}
\def\T{{\mathbb T}}
\def\W{{\mathcal W}}
\def\H{{\mathcal H}}
\def\F{{\mathcal F}}
\def\tr{{\mathrm{Tr}}}
\def\Var{{\mathrm{Var}}}
\title[Independent projections of diffusions]{Independent projections of diffusions: Gradient flows for variational inference and  optimal mean field approximations}
\author{Daniel Lacker} 
\address{Department of Industrial Engineering \& Operations Research, Columbia University}
\email{daniel.lacker@columbia.edu}
\thanks{D.L.\ is partially supported by the NSF CAREER award DMS-2045328.}
\begin{document}
\begin{abstract}
What is the optimal way to approximate a high-dimensional diffusion process by one in which the coordinates are \emph{independent}? This paper presents a construction, called the \emph{independent projection}, which is optimal for two natural criteria. First, when the original diffusion is reversible with invariant measure $\rho_*$, the independent projection serves as the Wasserstein gradient flow for the relative entropy $H(\cdot\,|\,\rho_*)$ constrained to the space of product measures. This is related to recent Langevin-based sampling schemes proposed in the statistical literature on mean field variational inference. In addition, we provide both qualitative and quantitative results on the long-time convergence of the independent projection, with quantitative results in the log-concave case derived via a new variant of the logarithmic Sobolev inequality. Second, among all processes with independent coordinates, the independent projection is shown to exhibit the slowest growth rate of path-space entropy relative to the original diffusion. This sheds new light on the classical McKean-Vlasov equation and recent variants proposed for non-exchangeable systems, which can be viewed as special cases of the independent projection.
\end{abstract}

\maketitle

\section{Introduction}

Fix positive integers $d$ and $n$, thinking of $n$ as the number of particles and $d$ as the dimension of the space in which each particle resides.
Fix a Lipschitz function  $\bm{b}=(b^1,\ldots,b^n) : (\R^d)^n \to (\R^d)^n$.
Suppose particles $(Y^1_t,\ldots,Y^n_t)=\bm{Y}_t$ satisfy the stochastic different equation (SDE)
\begin{equation}
d\bm{Y}_t = \bm{b}(\bm{Y}_t)\,dt + \sqrt{2}\,d\bm{B}_t, \label{def:SDE-reference}
\end{equation}
where $\bm{B}=(B^1,\ldots,B^n)$ is a standard Brownian motion in $(\R^d)^n \cong \R^{dn}$. The driving Brownian motions are independent, but the vector field $b$ generally induces dependence between the particles. 

In this paper, we study a process $\bm{X}=(X^1,\ldots,X^n)$ with \emph{independent} components which, in certain precise senses, is a good approximation of  $\bm{Y}$. 
We first need some notation.
For $\bm{x}=(x^1,\ldots,x^n) \in (\R^d)^n \cong \R^{dn}$, $y \in \R^d$, and  $i=1,\ldots,n$, we  denote ``$\bm{x}$ with the $i$th coordinate removed" and ``$\bm{x}$ with the $i$th coordinate swapped for $y$," respectively, by
\begin{align*}
\bm{x}^{-i} &:= (x^1,\ldots,x^{i-1},x^{i+1},\ldots,x^n) \in \R^{d(n-1)}, \\
(y,\bm{x}^{-i}) &:= (x^1,\ldots,x^{i-1},y,x^{i+1},\ldots,x^n) \in \R^{dn}.
\end{align*}
We use similar notation for measures: for a probability measure $\mu$ on $\R^{dn}$, we write $\mu^{-i}$ for the  pushforward of $\mu$ through the map $\bm{x} \mapsto \bm{x}^{-i}$.
The SDE we study is the following:
\begin{equation}
\begin{cases}
dX^i_t &= \int_{\R^{d(n-1) }} b^i(X^i_t,\bm{x}^{-i})\,\mu^{-i}_t(d\bm{x}^{-i})\,dt + \sqrt{2}\,dB^i_t, \\
\mu_t &= \mathrm{Law}(\bm{X}_t), \quad t \ge 0, \ \ i=1,\ldots,n.
\end{cases} \label{def:mainSDE}
\end{equation}
We term this SDE \emph{the independent projection} of \eqref{def:mainSDE}, for reasons that will be explained below. Because we assume $\bm{b}$ to be Lipschitz, the existence and uniqueness of a solution of  \eqref{def:mainSDE} is a straightforward application of known results on McKean-Vlasov equations; see Proposition \ref{pr:wellposed}.

This is a \emph{McKean-Vlasov} equation, in the sense that the drift depends not only on $\bm{X}_t$ but also on the law $\mu_t$ of the solution. The only stochastic term entering the drift of $dX^i_t$ is $X^i_t$ itself, and not $(X^j_t)_{j \ne i}$ which are integrated out. Because of this, if the initial positions are assumed independent, then the processes $X^1,\ldots,X^n$ are necessarily independent.
This explains the term \emph{independent} in our title \emph{independent projection} and also allows us to rewrite \eqref{def:mainSDE} in a more probabilistic notation,
\begin{equation}
dX^i_t = \E[b^i(\bm{X}_t)\,|\,X^i_t]\,dt + \sqrt{2}\,dB^i_t, \quad i=1,\ldots,n. \label{def:mainSDE-probabilistic}
\end{equation}
The conditional expectation provides a first justification of our choice of term \emph{projection} in \emph{independent projection}. In Section \ref{se:geometry} we will see a deeper (though formal) justification in terms of the geometry of Wasserstein space: the drift of $d\bm{X}_t$ can be viewed the $L^2(\mu_t)$-projection of the vector field $\bm{b}$ onto the tangent space at $\mu_t$ of the submanifold of \emph{product measures}.

The time-marginals of the processes $\bm{Y}$ and $\bm{X}$ may be described in terms of partial differential equation (PDEs).
As is well known, the law $\rho_t$ of $\bm{Y}_t$ evolves according to the Fokker-Planck equation
\begin{equation}
\partial_t \rho_t = -\mathrm{div}(\rho_t \bm{b}) + \Delta \rho_t. \label{def:FokkerPlanck}
\end{equation}
The law $\mu^i_t$ of $X^i_t$, for $i=1,\ldots,n$, similarly evolves according to a system of coupled PDEs,
\begin{equation}
\partial_t \mu^i_t = -\mathrm{div}(\mu^i_t \widehat{b}^i[\mu_t]) + \Delta \mu^i_t. \label{def:newFokkerPlanck-system}
\end{equation}
Here $\widehat{b}^i[\mu_t] :  \R^d \to \R^d$ derives from $b^i$ by integrating out the variables $j \neq i$: 
\begin{equation*}
\widehat{b}^i[\mu_t](x^i) := \int_{\R^{d(n-1)}} b^i(x^i,\bm{x}^{-i})\,\mu^{-i}_t(d\bm{x}^{-i}), \quad t \ge 0, \ x^i \in \R^d.
\end{equation*}
Letting $\widehat{\bm{b}}[\mu_t](\bm{x})=(\widehat{b}^1[\mu_t](x^1),\ldots,\widehat{b}^n[\mu_t](x^n))$, it is easy to deduce that the law $\mu_t=\mu^1_t\otimes\cdots\otimes\mu^n_t$ of $\bm{X}_t$   follows a nonlinear, nonlocal PDE,
\begin{equation}
\partial_t \mu_t = -\mathrm{div}(\mu_t \widehat{\bm{b}}[\mu_t]) + \Delta \mu_t. \label{def:newFokkerPlanck}
\end{equation}

\begin{example} \label{ex:McKeanVlasov}
A widely studied class of examples arises when the drift is a self-interaction plus a (mean field) pairwise interaction,
\begin{equation*}
b^i(\bm{x}) = K_1(x^i) + \frac{1}{n-1}\sum_{j\neq i}K_2(x^i,x^j).
\end{equation*}
The initial SDE \eqref{def:SDE-reference} then becomes a classical interacting particle system,
\begin{equation}
dY^i_t = \bigg(K_1(Y^i_t) + \frac{1}{n-1}\sum_{j \neq i}K_2(Y^i_t,Y^j_t)\bigg)dt + \sqrt{2}\,dB^i_t, \quad i=1,\ldots,n. \label{def:meanfieldparticles}
\end{equation}
If the initial positions $Y^1_0,\ldots,Y^n_0$ are iid $\sim \mu_0$, then the phenomenon of \emph{propagation of chaos} \cite{sznitman1991topics} dictates that  the empirical measure $\frac{1}{n}\sum_{i=1}^n\delta_{Y^i_t}$ converges as $n\to\infty$ to a non-random limit $\overline{\mu}_t$, which is characterized as the solution of the McKean-Vlasov equation
\begin{equation}
d\overline{Y}_t = \bigg(K_1(\overline{Y}_t) + \int_{\R^d} K_2(\overline{Y}_t,x)\,\overline{\mu}_t(dx)\bigg)dt + \sqrt{2}\,d\overline{B}_t, \quad \overline{Y}_0 \sim \mu_0, \ \overline{\mu}_t=\mathrm{Law}(\overline{Y}_t). \label{def:McKeanVlasov}
\end{equation}
The independent projection turns out to coincide with this McKean-Vlasov equation, as will be shown in Proposition \ref{pr:McKeanVlasov}.
Precisely, when initialized from $\bm{X}_0 \sim \mu_0^{\otimes n}$, the unique solution $\bm{X}$ of \eqref{def:mainSDE} has the same law as $n$ iid copies of $\overline{Y}$, and in particular $\mu_t=\overline{\mu}_t^{\otimes n}$ is the time-$t$ law.
In this sense, the notion of independent projection provides a \emph{non-asymptotic} connection between the particle system \eqref{def:meanfieldparticles} and the McKean-Vlasov equation.
\end{example}

There are three main goals of this paper:

\subsection{Independent projection as a gradient flow} 

Suppose that $\bm{b}=\nabla f$ is the gradient of a function $f$ such that $\rho_*(dx)=e^{f(x)}dx$ is a probability measure.
It is known from the work of Jordan-Kinderlehrer-Otto \cite{jordan1998variational} that $(\rho_t)_{t \ge 0}$ can be interpreted as the curve of steepest descent, or \emph{gradient flow}, in Wasserstein space for the relative entropy functional $H(\cdot\,|\,\rho_*)$.
We show that $(\mu_t)_{t \ge 0}$ is the Wasserstein gradient flow for the same relative entropy functional \emph{constrained to the subset of product measures}.
See Section \ref{se:intro:GF}.
To shed further light on this, Section \ref{se:geometry} describes the (formal) geometric structure of the space of product measures as a submanifold of the Wasserstein space, in the spirit of Otto calculus.  This will explain also a ``complementary" relationship between the independent projection and the \emph{projected Langevin dynamics} recently introduced by the author with G.\ Conforti and S.\ Pal in \cite{ConfortiLackerPal} in connection with entropic optimal transport.

\subsection{Long-time behavior}
It is well known that $\rho_t \to \rho_*$ weakly as $t\to\infty$, with $\rho_*$ of course being the unique minimizer of the entropy $H(\cdot\,|\,\rho_*)$ over the space of probability measures.
We show that the long-time limit points of $\mu_t$ are critical points for the problem of minimizing $H(\cdot\,|\,\rho_*)$ \emph{over the space of product measures}. When $f$ is concave, these critical points are in fact global minimizers. When $f$ is strictly concave there is a unique minimizer, and $\mu_t$ converges to it. When $f$ is strongly concave, convergence holds at an exponential rate, both in Wasserstein distance and in relative entropy, the latter encoded by a new variant of the logarithmic Sobolev inequality. See Section \ref{se:longtime}. 
The statistical literature on \emph{variational inference} contains a great deal of work on the  problem of algorithmically minimizing $H(\cdot\,|\,\rho_*)$ over product measures. 
The continuous-time process studied in this paper would be a natural basis for a sampling approach to mean field variational inference. We discuss some related literature and speculation in Section \ref{se:VIliterature}, though we do not attempt a rigorous analysis of any time-discretizations in this paper.

\subsection{Infinitesimal optimality of the independent projection} \label{se:intro:entropicoptimality}

Consider the relative entropy $H_t$ between the law of $(\bm{X}_s)_{s \le t}$ and the law of $(\bm{Y}_s)_{s \le t}$, viewed as probability measures on the space of continuous paths $[0,t] \to \R^{dn}$.
Consider the infinitesimal growth rate $\liminf_{t\to 0} (H_t-H_0)/t$.
Compared to any other process with independent components and the same time-zero distribution, we  show that $\bm{X}$ minimizes the growth rate $H_0'$.
Recalling Example \ref{ex:McKeanVlasov}, this  translates immediately to a new non-asymptotic optimality property for the McKean-Vlasov equation \eqref{def:McKeanVlasov}, which is a special case of an independent projection.
This property also explains why independent projections have appeared useful in some very recent studies on mean field approximations and propagation of chaos for non-exchangeable interacting particle systems.
See Section \ref{se:intro:entropic} for details.

This explains a sense in which $\bm{X}$ is the \emph{best} approximation of $\bm{Y}$ by independent processes. But this does not mean it is always a \emph{good} approximation. In Section \ref{se:intro:goodbound} we record a quantitative estimate on the distance between the laws of $\bm{X}$ and $\bm{Y}$, along with an application to quantitative mean field limits for non-exchangeable interacting particle systems.

\section{Main results}

This section states the main results of the paper in detail, with most proofs being deferred to later sections.
We begin by summarizing some frequently used notation.
We write $\P(E)$ for the set of Borel probablity measures on a complete separable metric space $(E,d)$, equipped with the topology of weak convergence. We write also $\P^{\otimes n}(E)$ for the subspace of $\P(E^n)$ consisting of $n$-fold product measures, i.e.,
\begin{equation*}
\P^{\otimes n}(E) = \{\mu^1 \otimes \cdots \otimes \mu^n : \mu^i \in \P(E), \ i =1,\ldots,n\}.
\end{equation*}
Note that $\P^{\otimes n}(E)$ and $(\P(E))^n$ are homeomorphic in the obvious way, but they are not isomorphic as subsets of vector spaces; the latter is convex, whereas the former is not. 
The relative entropy between $\mu,\nu \in \P(E)$ is defined as usual by
\begin{align*}
H(\nu\,|\,\mu) = \int\frac{d\nu}{d\mu}\log\frac{d\nu}{d\mu}\,d\mu \ \text{ if } \nu \ll \mu, \quad H(\nu\,|\,\mu) =  \infty \text{ otherwise}.
\end{align*}
For $p \ge 1$ we define $\P_p(E)$ to be the set of $\mu \in \P(E)$ satisfying $\int_Ed^p(x,x_0)\,m(dx) < \infty$ for some $x_0 \in E$. The $p$-Wasserstein distance is defined for  $\mu,\nu \in \P_p(E)$ by
\begin{equation*}
\W_p^p(\mu,\nu) = \inf_{\pi \in \Pi(\mu,\nu)} \int_{E \times E} d^p(x,y)\pi(dx,dy),
\end{equation*}
where $\Pi(\mu,\nu')$ is the set of couplings, i.e., probability measures on $E \times E$ with first marginal $\mu$ and second marginal $\nu$.
Finally, we define
\begin{equation*}
\P_p^{\otimes n}(E) := \P^{\otimes n}(E) \cap \P_p(E^n).
\end{equation*}
A probability measure on a Euclidean space will be tacitly identified with its (Lebesgue) density, when it exists.
Letting $\R_+:=[0,\infty)$, we equip the continuous path space $C(\R_+;E)$ with the usual topology of uniform convergence on compacts.

\begin{remark}
Throughout the paper, the drift $\bm{b}$ is assumed to be Lipschitz mainly for simplicity, and the results can likely be extended to more general settings.
\end{remark}

\subsection{Independent projection as a gradient flow} \label{se:intro:GF}

In this section we assume $\bm{b}=\nabla f$, where $f : \R^{dn} \to \R$ is $C^1$, has Lipschitz gradient, and satisfies $Z := \int e^f < \infty$, so that
\begin{equation}
\rho_*(d\bm{x}) = (1/Z)e^{f(\bm{x})}d\bm{x} \label{def:P}
\end{equation}
defines a probability measure. 
The measure $\rho_*$ is well known to be the unique invariant measure of the Markov process defined by the SDE \eqref{def:SDE-reference}, which rewrites as
\begin{equation}
d\bm{Y}_t = \nabla f(\bm{Y}_t)\,dt + \sqrt{2}d\bm{B}_t. \label{def:SDE-Langevin}
\end{equation}
There are many senses, qualitative and qualitative, in which the solution of \eqref{def:SDE-Langevin} is known to converge to $\rho_*$ as $t\to \infty$. We do not attempt to survey the vast literature here, but we mention the excellent monograph-in-progress \cite{ChewiBook} which documents many known results, techniques, and applications to sampling.

Let us recall the pioneering result of Jordan-Kinderlehrer-Otto (JKO) \cite{jordan1998variational}, which showed in a rigorous sense that the measure flow induced by the Langevin dynamics \eqref{def:SDE-Langevin} is the steepest descent (or gradient flow) on the Wasserstein space $(\P_2(\R^{dn}),\W_2)$ for the entropy functional $H(\cdot\,|\,\rho_*)$. Fix an initial condition $\rho_0 \in \P_2(\R^{dn})$. Let $\tau > 0$. Set $\rho^{\tau,0}=\rho_0$, and for integers $k \ge 0$ let
\begin{equation}
\rho^{\tau,k+1} \in \mathrm{argmin}_{\rho \in \P_2(\R^{dn})} \bigg( H(\rho\,|\,\rho_*) + \frac{1}{2\tau}\W_2^2(\rho,\rho^{\tau,k})\bigg). \label{def:JKO-original}
\end{equation}
A unique minimizer exists, because $H(\cdot\,|\,\rho_*)$ has weakly compact sublevel sets and is strictly convex, and because $\W_2^2(\cdot,\rho^{\tau,k})$ is lower semicontinuous and convex by Kantorovich duality; see also \cite[Proposition 4.1]{jordan1998variational} for a proof from first principles. Define piecewise-constant interpolations by setting $\rho^\tau_0=\rho_0$ and $\rho^\tau_t=\rho^{\tau,k+1}$ for $t \in (k\tau,(k+1)\tau]$. Then, as $\tau \downarrow 0$, $(\rho^\tau_t)_{t \ge 0}$ converges pointwise (weakly) to $(\rho_t)_{t \ge 0}$, where $\rho_t$ is the law of $\bm{Y}_t$ which solves the SDE \eqref{def:SDE-Langevin} initialized from $\bm{Y}_0 \sim \rho_0$.

We will show that the independent projection is similarly the steepest descent when the optimization in \eqref{def:JKO-original} is constrained to the set of product measures.   
Fix an initial condition $\mu_0 \in \P_2^{\otimes n}(\R^d)$ admitting a density. Let $\tau > 0$. Set $\mu^{\tau,0}=\mu_0$, and for integers $k \ge 0$ let
\begin{equation}
\mu^{\tau,k+1} \in \mathrm{argmin}_{\mu \in \P_2^{\otimes n}(\R^d)} \bigg( H(\mu\,|\,\rho_*) + \frac{1}{2\tau}\W_2^2(\mu,\mu^{\tau,k})\bigg). \label{def:JKO}
\end{equation}
A minimizer exists but may not be unique, because $\P_2^{\otimes n}(\R^d) \subset \P_2(\R^{dn})$ is not convex. 
Define piecewise-constant interpolations $(\mu^\tau(t))_{t \ge 0}$ by setting  $\mu^\tau_0=\mu_0$ and $\mu^\tau_t = \mu^{\tau,k+1}$ for $t\in (k\tau,(k+1)\tau]$. The following is proven in Section \ref{se:proof:JKO}.

\begin{theorem} \label{th:JKO}
Let $\bm{X}$ denote the unique solution of \eqref{def:mainSDE} with $\bm{b}=\nabla f$ initalized from $\bm{X}_0 \sim \mu_0$, and let $\mu_t$ be the law of $\bm{X}_t$. 
Then, as $\tau \downarrow 0$, $\mu^\tau_t \to \mu_t$ weakly for each $t \ge 0$.
\end{theorem}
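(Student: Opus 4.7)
The plan is to follow the classical JKO strategy of \cite{jordan1998variational}, adapted to the constraint that each iterate is a product measure. The new difficulty is that $\P_2^{\otimes n}(\R^d)$ is not convex, so perturbations cannot be formed as convex combinations—I will instead vary a single marginal at a time.

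First I would establish the standard a priori estimates. The one-step inequality
\[
H(\mu^{\tau,k+1}\,|\,\rho_*) + \frac{1}{2\tau}\W_2^2(\mu^{\tau,k+1},\mu^{\tau,k}) \le H(\mu^{\tau,k}\,|\,\rho_*)
\]
yields a uniform entropy bound together with $\sum_k \W_2^2(\mu^{\tau,k+1},\mu^{\tau,k}) = O(\tau)$. Cauchy--Schwarz upgrades the latter to an equi-$1/2$-Hölder estimate in $\W_2$ for the interpolants $(\mu^\tau_t)$, so along a subsequence $\mu^\tau_t \to \mu_t$ weakly for every $t \ge 0$, with $t\mapsto\mu_t$ $\W_2$-continuous. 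Since $\P_2^{\otimes n}(\R^d)$ is closed under weak convergence, each $\mu_t$ is again a product measure, and the entropy bound furnishes uniform second-moment control.

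Next I would compute the Euler--Lagrange condition of a single JKO step by exploiting the product decoupling
\[
H(\mu\,|\,\rho_*) = \sum_{i=1}^n\int\log\mu^i\,d\mu^i - \int f\,d\mu + \log Z, \qquad \W_2^2(\mu,\mu^{\tau,k}) = \sum_{i=1}^n\W_2^2(\mu^i,\mu^{\tau,k,i}),
\]
the second identity holding because $\mu^{\tau,k}$ is itself a product (by induction) and $\W_2^2$ tensorizes across product marginals. Perturbing only the $i$-th marginal via $\mu^i \mapsto (\mathrm{id}+\varepsilon\zeta)_\#\mu^i$ for $\zeta\in C_c^\infty(\R^d,\R^d)$ keeps the iterate in the constraint set, so this is an admissible variation. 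Differentiating in $\varepsilon$ and invoking the Brenier formula for the first variation of $\W_2^2(\,\cdot\,,\mu^{\tau,k,i})$, the first-order condition reads distributionally
\[
(x^i - T^i(x^i))/\tau = \nabla\log\mu^{\tau,k+1,i}(x^i) - \widehat{b}^i[\mu^{\tau,k+1}](x^i),
\]
where $T^i$ transports $\mu^{\tau,k+1,i}$ optimally to $\mu^{\tau,k,i}$. The key observation is that differentiating $\int f\,d\mu = \int \big(\!\int f(x^i,\bm{x}^{-i})\mu^{-i}(d\bm{x}^{-i})\big)\mu^i(dx^i)$ with respect to $\mu^i$ alone produces exactly $\widehat{b}^i[\mu^{\tau,k+1}]$; this is the mechanism by which the projected drift appears in place of the full $b^i$.

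Testing this identity against $\varphi\in C_c^\infty(\R^{dn})$ and telescoping in $k$ gives a discrete continuity equation for $(\mu^\tau_t)$; passing to the limit $\tau\downarrow 0$ yields the weak form of \eqref{def:newFokkerPlanck-system}, and hence of \eqref{def:newFokkerPlanck} since each $\mu_t$ is a product. Proposition \ref{pr:wellposed} provides uniqueness for this PDE, so every subsequential limit coincides with $\mathrm{Law}(\bm{X}_t)$, upgrading convergence to the full sequence. The main obstacle is the constrained first-variation step: because $\P_2^{\otimes n}(\R^d)$ is not convex one must work exclusively with marginal-wise perturbations and verify carefully that $\widehat{b}^i[\mu]$—rather than the full $b^i$ that appears in the unconstrained JKO—emerges in the Euler--Lagrange identity. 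A secondary technicality is passing to the limit in the nonlinear term $\int\widehat{b}^i[\mu^\tau_s]\cdot\nabla_{x^i}\varphi\,d\mu^\tau_s$, which requires continuity of $\mu\mapsto\widehat{b}^i[\mu]$ along tight sequences; the Lipschitz hypothesis on $\bm{b}$ and the uniform second-moment control from Step 1 handle this.
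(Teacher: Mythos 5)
Your approach is essentially the same as the paper's: both exploit the tensorization identities $H(\mu)=\sum_i H(\mu^i)$ and $\W_2^2(\mu,\nu)=\sum_i\W_2^2(\mu^i,\nu^i)$ to reduce the constrained JKO step to $n$ coordinate-wise unconstrained JKO problems with effective potential $-f^i_{\mu^{\tau,k+1}}$, then follow the original JKO discrete continuity estimate, pass to the limit, and conclude via uniqueness of the SDE/PDE (the paper doing so explicitly through the superposition principle and the Santambrogio-style geodesic interpolant to get continuity of the limit curve, steps you also gesture at). The only small slip is a sign in your displayed Euler--Lagrange condition — it should read $(x^i - T^i(x^i))/\tau = \widehat{b}^i[\mu^{\tau,k+1}](x^i) - \nabla\log\mu^{\tau,k+1,i}(x^i)$ — but this does not affect the argument.
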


In other words, Theorem \ref{th:JKO} gives a rigorous sense in which the independent projection provides the curve of steepest descent for the following minimization problem:
\begin{equation}
\inf\{H(\mu\,|\,\rho_*) : \mu \in \P^{\otimes n}(\R^d)\}. \label{def:meanfield}
\end{equation}
This minimization problem is well known in probability as a  \emph{mean field approximation}.
In the  statistics literature, it goes by the name  \emph{mean field variational inference} \cite{blei2017variational}; the term \emph{variational inference} refers to the minimization of relative entropy over some class of measures, and the term \emph{mean field} refers to the specification of this class to be the set of product measures.
The general idea is that a high-dimensional measure may not itself be easy to work with, and variational inference identifies an approximation with good properties.
For an alternative perspective, letting $H(\mu)=\int\mu\log\mu$ denote the differential entropy of  $\mu$, the infimum in \eqref{def:meanfield} is precisely
\begin{align*}
\sup_{\mu \in \P(\R^{dn})}\bigg(\int f\,d\mu - H(\mu)\bigg)  - \sup_{\mu \in \P^{\otimes n}(\R^d)}\bigg(\int f\,d\mu - H(\mu)\bigg),
\end{align*}
which quantifies the degree to which the Gibbs variational formula is saturated by product measures. Sharp bounds on this quantity are central in the recent literature on \emph{nonlinear large deviations} initiated by \cite{ChatterjeeDembo}.

The study of constrained gradient flows in Wasserstein goes back to the work of Carlen and Gangbo \cite{CarlenGangbo,carlen2004solution}, which studied a geodesically non-convex subset of Wasserstein space in part to construct solutions of kinetic Fokker-Planck equations which conserve energy and momentum.

\subsection{Long-time behavior} \label{se:longtime}

Our next set of results describe the long-time limit points of the independent projection in terms of the optimization problem \eqref{def:meanfield}, again in the case where $\bm{b}=\nabla f$ is a gradient.
We begin with the cleanest case of concave $f$.
Throughout, we assume $f$ has bounded continuous second derivatives, and we let $\mu_t$ for $t \ge 0$ denote the law of the unique solution $\bm{X}_t$ of \eqref{def:meanfield}, subject to some given initial law $\mu_0 \in \P_2^{\otimes n}(\R^d)$.
Recall that $f$ is said to be $\kappa$-concave if the eigenvalues of $\nabla^2f(\bm{x})$ are at most $-\kappa $ for each $\bm{x}$.

\begin{theorem} \label{th:convex-limit}
Suppose there exist $c_1 \in \R$ and $c_2 > 0$ such that
\begin{equation}
\bm{x} \cdot \nabla f(\bm{x}) \le c_1 - c_2|\bm{x}|^2, \quad \forall \bm{x} \in \R^{dn}. \label{asmp:dissipative}
\end{equation}
\begin{enumerate} 
\item If $f$ is concave, then for any divergent sequence of positive reals $(t_n)$, there exists a subsequence $(t_{n_k})$ such that $\mu_{t_{n_k}}$ converges in $(\P_2(\R^{dn}),\W_2)$ to an optimizer of \eqref{def:meanfield}.
\item If $f$ is strictly concave, then there exists a unique optimizer $\mu_*$ for \eqref{def:meanfield}, and $\W_2(\mu_t,\mu_*) \to 0$.
\item If $f$ is $\kappa$-concave for some $\kappa > 0$, then with $\mu_*$ as in (ii), we have
\begin{equation*}
\W_2(\mu_t,\mu_*) \le e^{-\kappa t}\W_2(\mu_0,\mu_*), \qquad \forall t \ge 0.
\end{equation*}
\end{enumerate}
\end{theorem}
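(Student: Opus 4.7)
The plan centers on the Wasserstein gradient-flow characterization from Theorem \ref{th:JKO} combined with geodesic geometry of $\P_2^{\otimes n}(\R^d)$. First I would establish uniform-in-$t$ moment bounds: It\^o's formula applied to $|\bm{X}_t|^2$, together with the tower-property identity $\E[\bm{X}_t \cdot \widehat{\bm{b}}[\mu_t](\bm{X}_t)] = \E[\bm{X}_t \cdot \nabla f(\bm{X}_t)]$ and the dissipativity hypothesis \eqref{asmp:dissipative}, yields $\frac{d}{dt}\E|\bm{X}_t|^2 \le 2(c_1 + dn) - 2c_2 \E|\bm{X}_t|^2$, so $\sup_{t \ge 0}\E|\bm{X}_t|^2 < \infty$; analogous computations (applied coordinatewise, using independence and the Lipschitz bound on $\widehat{b}^i[\mu_t]$) control higher moments and give $\W_2$-precompactness of $\{\mu_t\}_{t \ge 0}$. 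Next I would derive an energy-dissipation identity by differentiating $H(\mu_t\,|\,\rho_*)$ along \eqref{def:newFokkerPlanck-system}; writing each PDE as $\partial_t \mu^i_t = \mathrm{div}(\mu^i_t \nabla \psi^i_t)$ with $\psi^i_t := \log \mu^i_t - \bar f^i_t$, where $\bar f^i_t(x^i) := \int f(x^i,\bm{x}^{-i})\mu^{-i}_t(d\bm{x}^{-i})$ so that $\nabla \bar f^i_t = \widehat{b}^i[\mu_t]$, and integrating by parts on the product structure yields
\begin{equation*}
\frac{d}{dt} H(\mu_t\,|\,\rho_*) = -I(\mu_t\,|\,\rho_*), \qquad I(\mu\,|\,\rho_*) := \sum_{i=1}^n \int_{\R^d}|\nabla \log \mu^i - \widehat{b}^i[\mu]|^2\,d\mu^i,
\end{equation*}
from which $\int_0^\infty I(\mu_s\,|\,\rho_*)\,ds \le H(\mu_0\,|\,\rho_*) < \infty$.

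The key geometric fact is that $\W_2$-geodesics between product measures remain in $\P_2^{\otimes n}(\R^d)$: the identity $\W_2^2(\mu^1\otimes\cdots\otimes\mu^n,\nu^1\otimes\cdots\otimes\nu^n) = \sum_i \W_2^2(\mu^i,\nu^i)$, realized by the product of coordinate-wise optimal couplings, forces such geodesics to be products of one-dimensional $\W_2$-geodesics. Along such a geodesic $(\nu_s)_{s\in[0,1]}$, McCann's theorem makes $s\mapsto H(\nu_s)$ ($0$-)convex, while $\kappa$-concavity of $f$ combined with pointwise Taylor expansion along the optimal transport map $T$ yields
\begin{equation*}
\int f\,d\nu_s \ge (1-s)\int f\,d\nu_0 + s\int f\,d\nu_1 + \tfrac{\kappa}{2}s(1-s)\W_2^2(\nu_0,\nu_1).
\end{equation*}
Hence $H(\cdot\,|\,\rho_*)$ is $\kappa$-geodesically convex on $\P_2^{\otimes n}(\R^d)$ (with $\kappa = 0$ under mere concavity, $\kappa > 0$ in part (iii)).

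For (i), the integrability $\int_0^\infty I(\mu_s\,|\,\rho_*)\,ds < \infty$ combined with averaging yields times $s_n$ with $|s_n - t_n| \to 0$ and $I(\mu_{s_n}\,|\,\rho_*) \to 0$; by $\W_2$-continuity of $s\mapsto \mu_s$ (standard SDE regularity) and $\W_2$-precompactness, I can extract a common $\W_2$-limit $\mu_\infty$ of $(\mu_{t_n})$ and $(\mu_{s_n})$ along some subsequence. Lower semicontinuity of $I(\cdot\,|\,\rho_*)$ (the Fisher-information pieces are classically lsc under $\W_2$, and the $\widehat{b}^i[\mu]$ term is $\W_2$-continuous because $b^i$ is Lipschitz) then forces $I(\mu_\infty\,|\,\rho_*) = 0$, i.e.\ $\nabla\log\mu_\infty^i = \widehat{b}^i[\mu_\infty]$ for each $i$; this is exactly the first-order condition that the directional derivative of $H(\cdot\,|\,\rho_*)$ vanishes along every product $\W_2$-geodesic emanating from $\mu_\infty$, and geodesic convexity on $\P_2^{\otimes n}(\R^d)$ then upgrades this to $\mu_\infty$ being a global minimizer of \eqref{def:meanfield}. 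Part (ii) follows immediately: strict concavity makes the geodesic convexity strict, forcing uniqueness of the minimizer $\mu_*$, and (i) plus precompactness yield $\W_2(\mu_t,\mu_*)\to 0$.

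For (iii), $\kappa$-geodesic convexity together with the gradient-flow identification of Theorem \ref{th:JKO} yields the Ambrosio-Gigli-Savar\'e evolution variational inequality
\begin{equation*}
\tfrac{d}{dt}\tfrac{1}{2}\W_2^2(\mu_t,\mu_*) + \tfrac{\kappa}{2}\W_2^2(\mu_t,\mu_*) \le H(\mu_*\,|\,\rho_*) - H(\mu_t\,|\,\rho_*),
\end{equation*}
while the matching quadratic growth at the minimizer (a standard consequence of $\kappa$-geodesic convexity at a critical point) gives $H(\mu_t\,|\,\rho_*) - H(\mu_*\,|\,\rho_*) \ge \tfrac{\kappa}{2}\W_2^2(\mu_t,\mu_*)$; adding the two inequalities yields $\tfrac{d}{dt}\W_2^2(\mu_t,\mu_*) \le -2\kappa\W_2^2(\mu_t,\mu_*)$, and Gronwall closes the bound at the claimed $e^{-\kappa t}$ rate. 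The principal technical obstacle I anticipate is justifying the EVI rigorously on the non-affinely-convex subset $\P_2^{\otimes n}(\R^d)$; a direct, hands-on alternative for (iii) would compute $\tfrac{d}{dt}\W_2^2(\mu^i_t,\mu^i_*)$ coordinate-by-coordinate via Otto's formula with the Brenier maps $T^i_t:\mu^i_t\to\mu^i_*$ and exploit $\kappa$-concavity of $f$ together with the critical-point identity $\nabla\log\mu^i_* = \widehat{b}^i[\mu_*]$ to absorb the cross terms arising from the conditional-expectation drifts.
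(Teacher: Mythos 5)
Your plan for (i)--(ii) shares the paper's overall skeleton (energy dissipation, vanishing projected Fisher information along a time-shifted subsequence, $\W_2$-upgrade via uniform integrability from Lemma~\ref{le:momentbound}) but diverges at the final step. The paper does not argue via geodesic convexity at all: it proves in Theorem~\ref{th:main-MF} that subsequential limits solve the mean field equations \eqref{def:MFequations}, and then invokes the external equivalence ``mean field solution $\Leftrightarrow$ optimizer when $f$ is concave'' together with uniqueness for strictly concave $f$ from \cite{LackerMukherjeeYeung}. Your alternative---prove $\kappa$-geodesic convexity of $H(\cdot\,|\,\rho_*)$ on $\P_2^{\otimes n}(\R^d)$, observe that $\widetilde{I}=0$ is a first-order criticality condition, and conclude global optimality---is conceptually attractive and self-contained, but it silently absorbs two nontrivial technicalities: lower semicontinuity of $\widetilde{I}$ under $\W_2$-convergence (the paper sidesteps this by passing to the limit directly in a weak, integration-by-parts formulation in Lemma~\ref{le:coercive} rather than invoking lsc), and the identification of $\widetilde{I}(\mu_\infty\,|\,\rho_*)=0$ with a genuine one-sided derivative of $H$ at $s=0$ along arbitrary product geodesics (McCann's theorem gives convexity of $s\mapsto H(\nu_s)$ but not differentiability at the endpoint without additional regularity). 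Those two steps need actual proofs if you forgo the citation to \cite{LackerMukherjeeYeung}.

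For (iii) the discrepancy is more substantial. Both of your proposals---the abstract EVI on $\P_2^{\otimes n}(\R^d)$ and a coordinate-wise Otto/Brenier computation---are precisely what Remark~\ref{se:AGSbook} flags as a route that ``should immediately lead to analogues'' of Theorem~\ref{th:convex-limit}(iii) but whose rigorous connection to the actual SDE is ``a non-trivial task.'' The paper's proof bypasses this entirely with a \emph{synchronous SDE coupling}: it runs two copies $\bm{X}$ (from $\mu_0$) and $\bm{\overline{X}}$ (from $\mu_*$, which is stationary by Lemma~\ref{le:MF-invariant}) of the independent-projection SDE driven by the \emph{same} Brownian motion, initialized by taking coordinate-wise optimal couplings of $(\mu^i_0,\mu^i_*)$ and making them independent across $i$ (this is automatically an optimal coupling of the two product measures). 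The observation your proposal does not exploit is that, by independence of the coordinate-wise filtrations $\F^i_t$, one has $\E[\nabla_i f(\bm{X}_t)\,|\,X^i_t] = \E[\nabla_i f(\bm{X}_t)\,|\,\F^i_t]$, so after It\^o's formula the tower property collapses the sum over $i$ into $2\,\E[(\bm{X}_t-\bm{\overline{X}}_t)\cdot(\nabla f(\bm{X}_t)-\nabla f(\bm{\overline{X}}_t))]$, to which $\kappa$-concavity applies directly and Gr\"onwall finishes. This is considerably cleaner than either of your proposals: no Brenier regularity, no abstract EVI, just the standard contractivity-by-coupling argument for Langevin dynamics, adapted via the conditional-expectation/tower-property trick. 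I would strongly suggest pursuing the coupling route rather than trying to rigorize the EVI on a non-convex subset.
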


The proof is given in Section \ref{se:proof:longtime}. Part (iii) is a remarkably direct adaptation of the classical proof for Langevin dynamics.
Parts (i) and (ii) are less direct and rely on our next result, which treats the non-concave case.
The difficulty is that with just concavity but not strict concavity, there can be multiple optimizers in \eqref{def:meanfield}. And in the absence of any concavity, there may be local optimizers or critical points which are not global optimizers.
This well known nonuniqueness accounts for the appearance of phase transitions in many mean field models, such as the classical Curie-Weiss model.
Note, of course, that uniqueness of optimizers does not follow from strict convexity of $H(\cdot\,|\,\rho_*)$, because $\P^{\otimes n}(\R^d)$ is not a convex subset of $\P(\R^{dn})$. We will see below, however, that $\P_2^{\otimes n}(\R^d)$ is \emph{geodesically convex} in $\P_2(\R^{dn})$, which is why \eqref{def:meanfield} is well-behaved for  concave $f$.

Regardless of concavity, there is a well known first-order condition for optimality in \eqref{def:meanfield}: we say that a product measure $\mu=\mu^1\otimes \cdots \otimes \mu^n$ solves the \emph{mean field equations} if $f \in L^1(\mu)$ and
\begin{equation}
\mu^i(dx^i) = (1/Z_i)\exp\big(\E_\mu[f(\bm{X})\,|\,X^i=x^i]\big)dx^i,\quad i=1,\ldots,n, \label{def:MFequations}
\end{equation}
where $\bm{X} \sim \mu$ in the expectation.
See, e.g., \cite[Equation (17)]{blei2017variational} or \cite[Equation (1.3)]{LackerMukherjeeYeung}.
Any optimizer of \eqref{def:meanfield} can be shown to satisfy \eqref{def:MFequations} \cite[Proposition 3.5]{LackerMukherjeeYeung}, but the converse is not true in general.
It was recently shown that if $f$ is concave then optimizers of \eqref{def:meanfield} are equivalent to solutions of the mean field equations \eqref{def:MFequations}, and if $f$ is strictly concave then there is a unique optimizer; see Proposition 3.9 and Lemma 3.6 of  \cite{LackerMukherjeeYeung}. Because of this, Theorem \ref{th:convex-limit}(i,ii) follow immediately from the following general result, which is proven in Section \ref{se:proof:longtime}:

\begin{theorem} \label{th:main-MF}
Suppose the condition \eqref{asmp:dissipative} holds.
For any divergent sequence of positive reals $(t_n)$, there exists a subsequence $(t_{n_k})$ such that $\mu_{t_{n_k}}$ converges weakly to a solution $\mu_*$ of the mean field equations \eqref{def:MFequations}.
Moreover, along the same subsequence, it holds that
\begin{equation}
\lim_{k\to\infty} H(\mu_{t_{n_k}}\,|\,\rho_*) = H(\mu_*\,|\,\rho_*). \label{eq:entropyconvergence}
\end{equation}
\end{theorem}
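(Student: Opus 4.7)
The plan is to combine a Lyapunov argument for tightness with an entropy dissipation identity specific to the independent projection, extract the mean field equations as a first-order condition in the vanishing-dissipation limit, and then promote this to convergence of relative entropy. First, Itô's formula together with the conditional-expectation identity $\int x^i\cdot\widehat{b}^i[\mu_t]\,d\mu^i_t=\int x^i\cdot b^i(\bm{x})\,d\mu_t$ turns \eqref{asmp:dissipative} into $\frac{d}{dt}\int|\bm{x}|^2\,d\mu_t\le 2(c_1+dn)-2c_2\int|\bm{x}|^2\,d\mu_t$, so Gronwall gives $\sup_{t\ge 0}\int|\bm{x}|^2\,d\mu_t<\infty$ and hence tightness of $\{\mu_t\}_{t\ge 0}$. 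Differentiating $H(\mu_t|\rho_*)$ against \eqref{def:newFokkerPlanck} and integrating by parts,
\[\frac{d}{dt}H(\mu_t|\rho_*)=-\int\mu_t\,(\nabla\log\mu_t-\widehat{\bm{b}}[\mu_t])\cdot(\nabla\log\mu_t-\bm{b})\,d\bm{x};\]
the key algebraic point is that $\nabla\log\mu_t-\widehat{\bm{b}}[\mu_t]$ is coordinatewise (its $i$th block depends only on $x^i$) whereas $\bm{b}-\widehat{\bm{b}}[\mu_t]$ has vanishing conditional mean given any single coordinate, so the two are $L^2(\mu_t)$-orthogonal. The identity collapses to $\frac{d}{dt}H(\mu_t|\rho_*)=-I(t)$ with $I(t):=\sum_i\int|\nabla\log\mu^i_t-\widehat{b}^i[\mu_t]|^2\,d\mu^i_t$, whence $\int_0^\infty I\le H(\mu_0|\rho_*)<\infty$ and $H_\infty:=\lim_t H(\mu_t|\rho_*)$ exists.

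Along the given divergent $(t_n)$, tightness yields $\mu_{t_{n_k}}\to\mu_*$ weakly; integrability of $I$ lets me select $t'_{n_k}\in[t_{n_k}-1,t_{n_k}]$ with $I(t'_{n_k})\to 0$, and a further subsequence ensures $\mu_{t'_{n_k}}\to\nu$ weakly as well. For $\psi\in C_c^\infty(\R^d;\R^d)$, integration by parts rewrites $\int\psi\cdot(\nabla\log\mu^i_{t'_{n_k}}-\widehat{b}^i[\mu_{t'_{n_k}}])\,d\mu^i_{t'_{n_k}}$ as $-\int(\mathrm{div}\,\psi)\,d\mu^i_{t'_{n_k}}-\int\psi\cdot\widehat{b}^i[\mu_{t'_{n_k}}]\,d\mu^i_{t'_{n_k}}$, and Cauchy--Schwarz bounds this in absolute value by $\|\psi\|_\infty\sqrt{I(t'_{n_k})}\to 0$. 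In the limit, weak convergence handles the first term; the second equals $\int\psi(x^i)\cdot b^i(\bm{x})\,d\mu_{t'_{n_k}}(\bm{x})$ by definition of $\widehat{b}^i$, and converges because $b^i$ has linear growth and second moments are uniformly bounded. The resulting identity $\int(\mathrm{div}\,\psi)\,d\nu^i+\int\psi\cdot\widehat{b}^i[\nu]\,d\nu^i=0$ is the weak form of $\nabla\log\nu^i=\widehat{b}^i[\nu]=\nabla_{x^i}\E_\nu[f\mid X^i=\cdot]$, which integrates to \eqref{def:MFequations}. Finally $\nu$ is stationary for \eqref{def:newFokkerPlanck} (since $\partial_t\nu^i=-\mathrm{div}(\nu^i(\widehat{b}^i[\nu]-\nabla\log\nu^i))=0$), and continuity of the flow over the bounded interval $t_{n_k}-t'_{n_k}\in[0,1]$ therefore forces $\mu_*=\nu$.

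For the entropy convergence, lower semicontinuity of $H(\cdot|\rho_*)$ plus monotonicity immediately yield $H(\mu_*|\rho_*)\le H_\infty$. Since $H(\mu_{t_{n_k}}|\rho_*)-H(\mu_{t'_{n_k}}|\rho_*)=-\int_{t'_{n_k}}^{t_{n_k}}I\to 0$, it suffices to show $\limsup_k H(\mu_{t'_{n_k}}|\rho_*)\le H(\mu_*|\rho_*)$. Decomposing $H(\mu_{t'_{n_k}}|\rho_*)=\sum_i\int\mu^i_{t'_{n_k}}\log\mu^i_{t'_{n_k}}-\int f\,d\mu_{t'_{n_k}}+\log Z$, the $\int f$ term converges to $\int f\,d\mu_*$ by uniform integrability, for which I would upgrade the second-moment bound to a $(2+\varepsilon)$-moment bound via a parallel Lyapunov computation. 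The differential-entropy terms are the delicate step: $I(t'_{n_k})\to 0$ together with boundedness of $\|\widehat{b}^i[\mu]\|_{L^2(\mu^i)}$ forces the Fisher informations $J(\mu^i_{t'_{n_k}})$ to be uniformly bounded, which via compact Sobolev embedding promotes weak convergence to strong $L^1$ convergence of densities and then continuity of differential entropy by standard uniform-integrability arguments. I expect this last upgrade to be the main technical obstacle; a potential alternative is to exploit directly the Boltzmann fixed-point structure $\mu^i_*\propto\exp\E_{\mu_*}[f\mid X^i=\cdot]$ from \eqref{def:MFequations} and argue entropy continuity from stability of this map under the $I(t'_{n_k})\to 0$ condition.
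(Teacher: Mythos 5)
Your first claim is essentially the paper's argument, organized slightly differently and correct in outline: the dissipation identity $\frac{d}{dt}H(\mu_t\,|\,\rho_*)=-\widetilde I(\mu_t\,|\,\rho_*)$ via the same conditional-orthogonality (the cross term vanishes because $\E[b^i(\bm X_t)-\widehat b^i[\mu_t](X^i_t)\,|\,X^i_t]=0$), integrability of the dissipation, and the integration-by-parts identification of limits with vanishing $\widetilde I$ is in substance Lemma \ref{le:coercive}. Two points need attention, though both are fixable. First, you assert $\int_0^\infty I\le H(\mu_0\,|\,\rho_*)<\infty$, but finiteness of the initial entropy is not a hypothesis of the theorem; the paper handles this by showing entropy becomes finite instantaneously (Lemma \ref{le:entropycomesdown}) and shifting time. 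Second, your passage from the auxiliary times $t'_{n_k}$ back to $t_{n_k}$ (``continuity of the flow over the bounded interval forces $\mu_*=\nu$'') is not a continuity-in-$t$ statement but a stability-in-initial-condition statement for the McKean--Vlasov flow over a unit time interval, combined with invariance of mean-field solutions (Lemma \ref{le:MF-invariant}); with only bounded second moments you get $\W_1$-convergence of $\mu_{t'_{n_k}}$, and a Gr\"onwall coupling estimate in $\W_1$ makes this work, but it should be stated. The paper instead uses uniform $\W_2$-continuity of the flow (Lemma \ref{le:unifcont}) and a grid argument to show $\W_2(\mu_t,S_{\mathrm{MF}})\to0$ outright; either route is fine.

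The genuine gaps are in the proof of \eqref{eq:entropyconvergence}. For the energy term you propose a uniform-in-time $(2+\varepsilon)$-moment bound ``via a parallel Lyapunov computation,'' but this does not go through as stated: the dissipativity \eqref{asmp:dissipative} controls only the joint quantity $\bm x\cdot\nabla f(\bm x)$, and after the tower-property step the $(2+\varepsilon)$-computation produces $\sum_i\E\big[|X^i_t|^{\varepsilon}X^i_t\cdot\nabla_i f(\bm X_t)\big]$, whose coordinate-dependent weights prevent reduction to \eqref{asmp:dissipative}; the Lipschitz bound alone then yields only exponential-in-time growth. This is precisely why the paper proves uniform integrability of $|\bm X_t|^2$ directly, by a conditioning argument (Lemma \ref{le:momentbound}), rather than through higher moments — you should invoke that instead, and note that the same uniform integrability is what upgrades tightness to $\W_2$-precompactness, which your argument also implicitly uses. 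For the differential-entropy term you explicitly leave the key step open; the paper closes it by a different mechanism: parabolic regularity (a uniform upper bound on the densities and interior H\"older estimates, then Arzel\`a--Ascoli) upgrades weak convergence to locally uniform convergence of densities, after which the split $\int_{B_r}+\int_{B_r^c}$ together with $\log\mu_{t_n}\le C$, the quadratic growth of $f$, and the uniform integrability from Lemma \ref{le:momentbound} gives \eqref{eq:entropyconvergence}. Your Fisher-information/Sobolev-compactness route is plausible along the times $t'_{n_k}$ (where $\widetilde I$ is small, so $\int|\nabla\log\mu^i|^2d\mu^i$ is bounded), but as written it is a plan rather than a proof, and it would still require $\W_2$-convergence or the uniform integrability above to handle the second-moment tails; so as it stands the second half of the theorem is not established.
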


The idea of the proof is to analyze the time-decay of $H(\mu_t\,|\,\rho_*)$. We will see formally that
\begin{equation}
\frac{d}{dt}H(\mu_t\,|\,\rho_*) = -\sum_{i=1}^n\E\big|\E[\nabla_i f(\bm{X}_t)\,|\,X^i_t] - \nabla_i\log \mu^i_t(X^i_t)\big|^2. \label{eq:intro:entropydecay}
\end{equation}
We thus expect any limit point $\mu$ of $\mu_t$ as $t \to \infty$ to make the right-hand side vanish, or
\begin{equation*}
\nabla_i \log\mu^i(x^i) = \E_\mu[\nabla_i f(\bm{X})\,|\,X^i=x^i],
\end{equation*}
where $\bm{X} \sim \mu$ in the expectation.
Antidifferentiating yields the mean field equations \eqref{def:MFequations}.

The entropy decay formula \eqref{eq:intro:entropydecay} naturally suggests a functional inequality to quantify the rate of convergence. Noting that $\nabla_i\log\mu_t(\bm{X}_t)=\nabla_i\log\mu^i_t(X^i_t)$ because $\mu_t(\bm{x})=\mu^1_t(x^1)\cdots\mu^n_t(x^n)$ is a product measure, the right-hand side of \eqref{eq:intro:entropydecay} can be rewritten as
\begin{equation*}
-\sum_{i=1}^n\E\bigg[\bigg|\E\Big[\nabla_i\log \frac{d\mu_t}{d\rho_*}(\bm{X}_t) \, \Big|\, X^i_t\Big]\bigg|^2 \bigg].
\end{equation*}
This motivates the following definition of a \emph{projected Fisher information} functional,
\begin{equation}
\widetilde{I}(\mu\,|\,\rho_*) := \sum_{i=1}^n\E_\mu\bigg[\bigg|\E_\mu\Big[\nabla_i\log \frac{d\mu}{d\rho_*}(\bm{X}) \, \Big|\, X^i\Big]\bigg|^2 \bigg], \qquad \mu=\mu^1\otimes \cdots \otimes \mu^n \in \P^{\otimes n}(\R^d), \label{def:projectedFisher}
\end{equation}
where we set $\widetilde{I}(\mu\,|\,\rho_*)=\infty$ whenever the weak gradient $\nabla \log d\mu/d\rho_*$ fails to exist or to belong to $L^2(\mu)$. We will say that $\rho_*$ satisfies the \emph{projected log-Sobolev inequality} (projected LSI) with constant $\kappa$ if
\begin{equation}
\widetilde{H}(\mu\,|\,\rho_*) := H(\mu\,|\,\rho_*) - \inf_{\nu \in \P^{\otimes n}(\R^d)}H(\nu\,|\,\rho_*) \le \frac{1}{2\kappa}\widetilde{I}(\mu\,|\,\rho_*), \qquad \forall \mu \in \P^{\otimes n}(\R^d). \label{def:projectedLSI}
\end{equation}
If this inequality holds, then from \eqref{eq:intro:entropydecay} we deduce that 
\begin{equation}
\widetilde{H}(\mu_t\,|\,\rho_*) \le e^{-2\kappa t}\widetilde{H}(\mu_0\,|\,\rho_*), \qquad \forall t \ge 0, \label{def:entropyrate}
\end{equation}
which quantifies the rate of convergence.

\begin{theorem} \label{th:LSI}
Suppose $f$ is $\kappa$-concave for some $\kappa > 0$. Then the projected LSI \eqref{def:projectedLSI} holds, as well as the exponential convergence \eqref{def:entropyrate}.
\end{theorem}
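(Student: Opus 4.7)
The plan is to run the geodesic-convexity / HWI argument, adapted to the submanifold $\P_2^{\otimes n}(\R^d) \subset \P_2(\R^{dn})$. Two structural facts drive the whole proof. First, $\kappa$-concavity of $f$ makes $-\log\rho_*$ a $\kappa$-convex potential on $\R^{dn}$, so by the classical Otto-Villani / McCann theory, $H(\cdot\,|\,\rho_*)$ is $\kappa$-geodesically convex on the full Wasserstein space $\P_2(\R^{dn})$. Second, the submanifold $\P_2^{\otimes n}(\R^d)$ is itself geodesically convex: for products $\mu=\otimes_i\mu^i$ and $\nu=\otimes_i\nu^i$, the Brenier map factorizes as $T(\bm{x})=(T^1(x^1),\ldots,T^n(x^n))$ with each $T^i$ the Brenier map from $\mu^i$ to $\nu^i$, so the displacement interpolation $((1-s)\mathrm{id}+sT)_\#\mu$ remains a product. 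The tangent space at $\mu\in\P_2^{\otimes n}$ therefore consists of vector fields of the split form $v(\bm{x})=(v^1(x^1),\ldots,v^n(x^n))$ with $v^i\in L^2(\mu^i)$.

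With these in hand, let $\mu_*$ be the unique minimizer of $H(\cdot\,|\,\rho_*)$ on $\P_2^{\otimes n}$, which exists by Theorem \ref{th:convex-limit}(ii). Let $(\mu_s)_{s\in[0,1]}$ be the $\W_2$-geodesic from $\mu$ to $\mu_*$, which stays in $\P_2^{\otimes n}$. Evaluating the $\kappa$-convexity estimate and letting $s\to 0^+$ yields
\begin{equation*}
H(\mu\,|\,\rho_*)-H(\mu_*\,|\,\rho_*)\le -\frac{d}{ds}\bigg|_{s=0^+}H(\mu_s\,|\,\rho_*)-\frac{\kappa}{2}\W_2^2(\mu,\mu_*).
\end{equation*}
The one-sided derivative on the right equals $\int \nabla\log(d\mu/d\rho_*)\cdot v_0\,d\mu$, where $v_0=(T^i-\mathrm{id})_{i=1}^n$ is the initial velocity and lies in the split tangent space. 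Conditioning on $X^i$ block-by-block rewrites the $i$-th summand as $\int(T^i(x^i)-x^i)\cdot\E_\mu[\nabla_i\log(d\mu/d\rho_*)(\bm{X})\mid X^i=x^i]\,d\mu^i(x^i)$, whose Cauchy-Schwarz bound is $\sqrt{\widetilde{I}(\mu\,|\,\rho_*)}\cdot\W_2(\mu,\mu_*)$. Inserting and optimizing $ab-\tfrac{\kappa}{2}b^2\le\tfrac{1}{2\kappa}a^2$ produces exactly $\widetilde{H}(\mu\,|\,\rho_*)\le\tfrac{1}{2\kappa}\widetilde{I}(\mu\,|\,\rho_*)$.

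The exponential convergence \eqref{def:entropyrate} is then immediate. Combining the projected LSI just established with the entropy-dissipation identity \eqref{eq:intro:entropydecay},
\begin{equation*}
\frac{d}{dt}\widetilde{H}(\mu_t\,|\,\rho_*)=-\widetilde{I}(\mu_t\,|\,\rho_*)\le-2\kappa\widetilde{H}(\mu_t\,|\,\rho_*),
\end{equation*}
so Gr\"onwall gives $\widetilde{H}(\mu_t\,|\,\rho_*)\le e^{-2\kappa t}\widetilde{H}(\mu_0\,|\,\rho_*)$.

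The main obstacle is the rigorous justification of the differentiation step on the submanifold: computing the one-sided derivative of entropy along the Wasserstein geodesic and identifying the admissible Otto-calculus pairing against the split-form tangent vector. This can be handled within the Ambrosio-Gigli-Savar\'e framework of Wasserstein subdifferentials, or by first regularizing $\mu$ (e.g., convolution with a small Gaussian) so that the formal Otto-calculus computation is rigorous, and then passing to the limit using joint lower semicontinuity of $\widetilde{I}$ and $\widetilde{H}$. A secondary technical point is promoting the entropy-dissipation identity \eqref{eq:intro:entropydecay} from a formal computation to an absolute-continuity statement along the independent-projection flow, which requires standard parabolic regularity for the coupled Fokker-Planck system \eqref{def:newFokkerPlanck-system}.
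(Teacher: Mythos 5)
Your argument is correct and takes a genuinely different route from the paper.

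The paper proves the projected LSI via the Bakry--\'Emery strategy: it differentiates the projected Fisher information $\widetilde{I}(\mu_t\,|\,\rho_*)$ along the independent-projection flow, runs a long second-derivative computation involving the conditional structure, and shows $\frac{d}{dt}\widetilde{I}(\mu_t\,|\,\rho_*)\le -2\kappa\widetilde{I}(\mu_t\,|\,\rho_*)$; integrating and using $\widetilde{H}(\mu_t\,|\,\rho_*)\to 0$ then yields the LSI. You instead run the static Otto--Villani / HWI argument along the Wasserstein geodesic from $\mu$ to the minimizer $\mu_*$, exploiting (a) McCann's $\kappa$-displacement convexity of $H(\cdot\,|\,\rho_*)$ on the full space, (b) geodesic convexity of $\P_2^{\otimes n}(\R^d)$ (established in Section \ref{se:geometry} of the paper), and (c) the crucial observation that the Brenier map between two product measures splits coordinatewise, so the pairing $\int\nabla\log(d\mu/d\rho_*)\cdot(T-\mathrm{id})\,d\mu$ factors through the tangent-space projection $\mathsf{T}_\mu$ and is bounded by $\sqrt{\widetilde{I}(\mu\,|\,\rho_*)}\,\W_2(\mu,\mu_*)$ rather than $\sqrt{I(\mu\,|\,\rho_*)}\,\W_2(\mu,\mu_*)$. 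This is a clean and correct adaptation of the HWI mechanism, and it exhibits the geometric origin of $\widetilde{I}$ transparently: the displacement vector lives in the split (horizontal) tangent space, so only the $\mathsf{T}_\mu$-projection of the Wasserstein gradient ever enters. Both approaches rely on the prior results of the paper for the existence (and, for you, uniqueness) of $\mu_*$ and for the entropy-dissipation identity along the flow; neither is self-contained in that respect. The trade-off is roughly: your HWI route is shorter and geometrically more illuminating, but requires the minimizer a priori and the rigorous one-sided-derivative/subdifferential step (which you correctly flag as the technical heart, resolvable via AGS subdifferential calculus); the paper's Bakry--\'Emery route is computationally heavier but demonstrates the perhaps more surprising fact that the $\Gamma_2$-type computation remains tractable after projection onto the product-measure submanifold, which the paper highlights as noteworthy in contrast to the companion work on couplings. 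The exponential convergence step is identical in both arguments.

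One small point worth making explicit when writing this up: you need $\mu$ absolutely continuous for the Brenier factorization and for the derivative formula, which holds automatically whenever $\widetilde{I}(\mu\,|\,\rho_*)<\infty$ (since this forces $\mu\ll\rho_*\ll\mathrm{Leb}$); and in the case $\widetilde{I}(\mu\,|\,\rho_*)=\infty$ the inequality is vacuous, so no generality is lost.
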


The proof, given in Section \ref{se:LSI:proofs}, follows the Bakry-\'Emery strategy \cite{BakryEmery} of computing the time-derivative of $\widetilde{I}(\mu_t\,|\,\rho_*)$ and bounding it in terms of $\widetilde{I}(\mu_t\,|\,\rho_*)$ itself.
Recall for the sake of comparison that the usual Fisher information is defined for any $\mu \in \P(\R^{dn})$, not just product measures, by
\begin{equation}
I(\mu\,|\,\rho_*) := \E_\mu\bigg|\nabla \log \frac{d\mu}{d\rho_*}(\bm{X})\bigg|^2 = \sum_{i=1}^n\E_\mu\bigg[\bigg| \nabla_i\log \frac{d\mu}{d\rho_*}(\bm{X}) \bigg|^2 \bigg], \label{def:FisherInfo}
\end{equation}
similarly set to $\infty$ when the gradient is not well defined.
The usual log-Sobolev inequality (LSI) takes the form
\begin{equation}
H(\mu\,|\,\rho_*)  \le \frac{1}{2\kappa}I(\mu\,|\,\rho_*), \qquad \forall \mu \in \P(\R^{dn}). \label{def:LSI}
\end{equation}
It similarly characterizes the exponential convergence in entropy of the solution of \eqref{def:SDE-Langevin}.
Neither the projected LSI nor the LSI can be deduced from the other.

The results of this section closely parallel the results of the recent paper \cite{ConfortiLackerPal} by the author with G.\ Conforti and S.\ Pal. The paper \cite{ConfortiLackerPal} studies an SDE, termed the \emph{projected Langevin dynamics}, which relates to the space of couplings $\Pi(\mu,\nu)$ of given marginals $(\mu,\nu)$ in the same way that the independent projection relates to the space of product measures $\P^{\otimes n}(\R^d)$. Specifically, the law of their dynamics at time $t$ converges as $t \to \infty$ to the solution of an entropy-minimization problem constrained to $\Pi(\mu,\nu)$, known as \emph{entropic optimal transport}. Our proof of Theorem \ref{th:main-MF} follows a similar strategy to the one developed in \cite{ConfortiLackerPal}. A similar projected LSI is derived in \cite{ConfortiLackerPal} in certain regimes, but by completely different arguments because the Bakry-\'Emery strategy did not appear to be tractable there. The difference between these two projected LSIs is most concisely seen as follows: Our $\widetilde{I}(\mu\,|\,\rho_*)$ is the squared  $L^2(\mu)$-norm of the projection of $\nabla \log d\mu/d\rho_*$ onto a certain subspace of $L^2(\mu)$, whereas the functional $\overline{I}(\mu\,|\,\rho_*)$ used in \cite{ConfortiLackerPal} is the squared norm of the projection onto the \emph{orthogonal complement} of this subspace. There is a geometric reason for this complementary relationship, explained in Section \ref{se:geometry}.

\subsection{Algorithmic implications for variational inference and related literature} \label{se:VIliterature}

Theorem \ref{th:JKO} suggests that the independent projection \eqref{def:mainSDE} may be a reasonable basis for a Monte Carlo algorithm for sampling from a solution of the mean field variational inference problem \eqref{def:meanfield}, in the spirit of Langevin Monte Carlo.
An implementable algorithm would need to efficiently contend with the McKean-Vlasov nature (i.e., measure-dependence) of the SDE \eqref{def:mainSDE}, in addition to time-discretization. 
We do not attempt here to develop any  algorithms, except to explain why one obvious idea may not be a good one.
This most obvious idea, suggested by the McKean-Vlasov form of the equation, is a particle approximation, in which a large number $m$ particles  are simulated and the $\mu^{-i}$-integral is replaced by an empirical average.
Specifically, consider ``particles" $(\bm{Z}^1,\ldots,\bm{Z}^m)$ each residing in $(\R^d)^n$, so that we might write $\bm{Z}^k=(Z^{ki})_{i=1,\ldots,n}$, with each $Z^{ki}$ residing in $\R^d$. They solve the SDE system  (recalling $\bm{b}=\nabla f$ in this section)
\begin{equation}
dZ^{ki}_t = \frac{1}{m-1}\sum_{j \neq k} \nabla_i f (Z^{j1}_t,\ldots,Z^{j(i-1)}_t,Z^{ki}_t,Z^{j(i+1)}_t,\ldots,Z^{jn}_t)\,dt + \sqrt{2}\,dB^{ik}_t, \label{def:particleapprox}
\end{equation}
for $k=1,\ldots,m$ and $i=1,\ldots,n$, where $(B^{ki})$ are independent Brownian motions, each of dimension $d$. 
Standard results on propagation of chaos show the following convergence. Let $\mu_0 \in \P^{\otimes n}(\R^d)$ be given, and take $(\bm{Z}^k_0)_{k=1,\ldots,m}$ to be iid $\sim\mu_0$. Then, for each fixed $k$, $(\bm{Z}^1,\ldots,\bm{Z}^k)$ converges in law as $m \to \infty$ to the $k$-fold product measure $\mu^{\otimes k}$, where $\mu$ is the law of the solution of \eqref{def:mainSDE} initialized from $\mu_0$. 
This my appear promising, 
as one could then discretize time in the SDE \eqref{def:particleapprox} to achieve an implementable algorithm.
But in order for this to be of practical value, one would want the rate of convergence, with respect to both the time-discretization and particle approximation, to be well-controlled (ideally uniform) with respect to the time horizon.
This poses a serious difficulty.
Non-uniform convergence rates for propagation of chaos tend to deteriorate exponentially with the time horizon. Known uniform-in-time convergence rates mainly rely on a dissipative drift, and the drift of the particle system \eqref{def:particleapprox} does not appear to inherit enough dissipativity from $\nabla f$ even when $f$ is strongly concave.
Lastly, on a conceptual level, the particle approximation \eqref{def:particleapprox} is unappealing because it does not respect the product measure structure of the independent projection; namely, for each $k$, $(Z^{k1},\ldots,Z^{kn})$ are not independent.

Algorithms for variational inference are an active area of research. 
The standard algorithm for mean field variational inference appears to be coordinate ascent, or CAVI \cite[Section 2.4]{blei2017variational}, with the recent work \cite{bhattacharya2023convergence} providing additional references and theoretical guarantees.
Closer in spirit to the present work are the recent papers \cite{yao2022mean,tran2023particle}, which propose discrete-time schemes for mean field variational inference based on a Wasserstein gradient flow perspective. The former adopts a structured latent variable model and the latter in a general setup. Section 4 of \cite{tran2023particle} notably identifies a discrete-time scheme closely related to the Euler approximation for the \emph{particle approximation} of the independent projection, discussed at \eqref{def:particleapprox} above.

Aside from the mean field framework of $\P^{\otimes n}(\R^d)$, common alternative classes of measures for variational inference \eqref{def:meanfield} are exponential families, especially Gaussians.
The recent paper \cite{lambert2022variational}  analyzes a constrained Fokker-Planck equation as a gradient flow for Gaussian (and Gaussian mixture) variational inference and has strong analogies to Section \ref{se:intro:GF} of this paper.

\subsection{Entropic optimality} \label{se:intro:entropic} 

This section makes precise the entropic optimality property announced in Section \ref{se:intro:entropicoptimality} above.
In this section we no longer need to assume that $\bm{b}$ is a gradient.

We start with some notation. 
For a metric space $E$ and for $P \in \P(C(\R_+;E))$ and $t \ge 0$, we write $P[t] \in \P(C([0,t];E))$ for the pushforward of $P$ by the restriction map $x \mapsto x|_{[0,t]}$, and we write $P_t \in \P(E)$ for the pushforward by the evaluation map $x \mapsto x_t$. For $t=0$, noting that $C(\{0\};E) \cong E$, we may identify $P_0=P[0]$.
Define an ``infinitesimal entropy" functional as follows.
For $Q,P\in \P(C(\R_+;E))$ satisfying $H(Q_0\,|\,P_0) < \infty$, let
\begin{align*}
\H'_0(Q\,|\,P) := \liminf_{t \downarrow 0}\frac{H(Q[t]\,|\,P[t]) - H(Q_0\,|\,P_0)}{t}.
\end{align*}
Intuitively, this is the growth rate at time zero of the path-space entropy $t \mapsto H(Q[t]\,|\,P[t])$. 
Note that $\H'_0(Q\,|\,P)$ takes values in $(-\infty,\infty]$. The following is proven in Section \ref{se:proof:entropicoptimality}.

\begin{theorem} \label{th:entropicoptimality}
Let $\rho_0 \in \P_2(\R^{dn})$. Let  $\mu_0 \in \P^{\otimes n}_2(\R^d)$  satisfy $H(\mu_0\,|\,\rho_0) < \infty$.
Let $\rho \in \P(C(\R_+;\R^{dn}))$ denote the law of the solution $\bm{Y}$ of \eqref{def:SDE-reference}, initialized from $\bm{Y}_0 \sim \rho_0$.
Let $\mu \in \P^{\otimes n}(C(\R_+;\R^d))$ denote the law of the solution $\bm{X}$ of \eqref{def:mainSDE}, initialized from $\bm{X}_0 \sim \mu_0$.
Then
\begin{equation}
\H'_0(\mu\,|\,\rho) = \frac14\sum_{i=1}^n \E\big|b^i(\bm{X}_0) - \E[b^i(\bm{X}_0)\,|\,X^i_0]\big|^2 < \infty, \label{eq:entopt-identity}
\end{equation}
and
\begin{equation}
\H'_0(\mu\,|\,\rho) \le \H'_0(\nu\,|\,\rho), \qquad \forall \nu \in \P^{\otimes n}(C(\R_+;\R^{d})) \text{ such that } \nu_0=\mu_0. \label{ineq:entopt}
\end{equation}
\end{theorem}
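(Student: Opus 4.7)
The plan is to apply Girsanov's theorem to both $\mu$ and $\nu$ against $\rho$, converting path-space relative entropies into time integrals of squared drift differences, and then exploit the product structure of $\nu$ via a conditional Pythagorean argument.

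For \eqref{eq:entopt-identity}: $\mu$ and $\rho$ are laws of diffusions on $\R^{dn}$ sharing the diffusion coefficient $\sqrt{2}\,I$, with drifts $\widehat{\bm{b}}[\mu_t]$ and $\bm{b}$ respectively, both of linear growth. Given $H(\mu_0\,|\,\rho_0)<\infty$, Girsanov's theorem yields
\begin{equation*}
H(\mu[t]\,|\,\rho[t]) - H(\mu_0\,|\,\rho_0) = \tfrac{1}{4}\E^\mu\int_0^t\big|\widehat{\bm{b}}[\mu_s](\bm{X}_s) - \bm{b}(\bm{X}_s)\big|^2\,ds.
\end{equation*}
Since $\mu_s$ is a product measure, $\widehat{b}^i[\mu_s](X^i_s) = \E[b^i(\bm{X}_s)\,|\,X^i_s]$. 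Lipschitz continuity of $\bm{b}$ and path-continuity of $\bm{X}$ render the integrand right-continuous at $s=0$ in $L^1$, so dividing by $t$ and letting $t\downarrow 0$ produces \eqref{eq:entopt-identity}; finiteness follows from $\mu_0\in\P_2^{\otimes n}(\R^d)$.

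For \eqref{ineq:entopt}, assume $\H'_0(\nu\,|\,\rho)<\infty$ (else trivial), which forces $\nu[t]\ll\rho[t]$ for small $t>0$. A Girsanov/martingale-representation argument in the Brownian filtration under $\rho$ then produces an adapted drift $\bm{\beta}_s$ satisfying $d\bm{X}_s = \bm{\beta}_s\,ds + \sqrt{2}\,d\bm{B}^\nu_s$ under $\nu$, with
\begin{equation*}
H(\nu[t]\,|\,\rho[t]) - H(\nu_0\,|\,\rho_0) = \tfrac{1}{4}\E^\nu\int_0^t|\bm{\beta}_s - \bm{b}(\bm{X}_s)|^2\,ds.
\end{equation*}
The product structure of $\nu$ makes $X^1,\ldots,X^n$ mutually $\nu$-independent, so each $\beta^i_s$ must be measurable with respect to the natural filtration $\F^i_s$ of $X^i$ alone. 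The conditional Pythagorean theorem then gives
\begin{equation*}
\E^\nu|\beta^i_s - b^i(\bm{X}_s)|^2 \ge \E^\nu\big|b^i(\bm{X}_s) - \E^\nu[b^i(\bm{X}_s)\,|\,\F^i_s]\big|^2 =: g^i(s).
\end{equation*}

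To pass to the limit, I verify that $g^i$ is right-continuous at $s=0$ with $g^i(0) = \E|b^i(\bm{X}_0) - \E[b^i(\bm{X}_0)\,|\,X^i_0]|^2$ (the last equality uses $\F^i_0=\sigma(X^i_0)$ and $\nu_0=\mu_0$ being a product). Path-continuity gives $b^i(\bm{X}_s)\to b^i(\bm{X}_0)$ in $L^2$, while splitting $\E[b^i(\bm{X}_s)\,|\,\F^i_s] = \E[b^i(\bm{X}_0)\,|\,\F^i_s] + \E[b^i(\bm{X}_s)-b^i(\bm{X}_0)\,|\,\F^i_s]$, the first term tends to $\E[b^i(\bm{X}_0)\,|\,X^i_0]$ in $L^2$ by reverse martingale convergence (using $\F^i_s\downarrow\sigma(X^i_0)$ modulo null sets via Blumenthal's zero-one law), and the second vanishes in $L^2$ by Lipschitzness of $b^i$. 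Since $\liminf_{t\downarrow 0}t^{-1}\int_0^t g^i(s)\,ds \ge g^i(0)$ for right-continuous $g^i$, summing over $i$ with superadditivity of $\liminf$ and invoking the identity from the first part gives $\H'_0(\nu\,|\,\rho) \ge \tfrac{1}{4}\sum_i g^i(0) = \H'_0(\mu\,|\,\rho)$.

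The main technical obstacle lies in the Girsanov step for general $\nu$: one must leverage the mere finiteness of $\H'_0(\nu\,|\,\rho)$ to establish $\nu[t]\ll\rho[t]$ with enough integrability to apply martingale representation on a short interval, and then exploit the product structure of $\nu$ to confirm that each $\beta^i$ is $\F^i$-adapted rather than merely $\F$-adapted. Once this drift extraction is in hand, the remaining steps are essentially a computation with conditional variances.
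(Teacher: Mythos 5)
Your overall strategy is the same as the paper's: express the path-space relative entropies via Girsanov, then exploit the product structure of $\nu$ to apply the conditional Pythagorean theorem. The first part \eqref{eq:entopt-identity} is handled exactly as the paper does it. For \eqref{ineq:entopt}, however, there are two places where your route differs from the paper's, and both are worth attention because as stated they are incomplete or unnecessarily fragile.

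First, you apply Girsanov to the joint $dn$-dimensional process and then assert that ``each $\beta^i_s$ must be measurable with respect to the natural filtration $\F^i_s$ of $X^i$ alone'' because $\nu$ is a product measure. This is true, but you acknowledge yourself that you have not established it, and it is not immediate: the Girsanov drift is only unique up to $dt\otimes d\nu$-null sets, is a priori only $\FF$-progressive, and the semimartingale decomposition of $X^i$ depends on the filtration used. The clean way to see it is to note that if $X^i = X^i_0 + A^i + M^i$ is the decomposition in $\FF^i$ under $\nu^i$, then $M^i$ remains a local martingale in the larger filtration $\FF$ under $\nu$ precisely because, under a product measure, $\FF^i$-adapted martingales are independent of $\bigvee_{j\neq i}\FF^j$; so the $\FF^i$-drift is also the $\FF$-drift. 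The paper avoids this entirely by observing $H(\nu^i[T]\,|\,\rho^i[T]) < \infty$ via data processing and then applying L\'eonard's finite-entropy Girsanov theorem coordinate-by-coordinate on $\Omega^i$, which hands you an $\FF^i$-adapted $\beta^i$ by construction and then inputs directly into L\'eonard's entropy identity. You would do well to carry this ``adaptedness'' step explicitly, rather than merely flagging it as an obstacle.

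Second, your right-continuity argument invokes reverse martingale convergence together with Blumenthal's zero-one law to argue $\F^i_{0+} = \sigma(X^i_0)$ modulo null sets. This is overkill and, as stated, has a hole: $X^i$ under $\nu^i$ is not a Markov or Feller process (it is merely a semimartingale with some integrable drift), so Blumenthal does not apply directly; one would have to transfer the germ triviality through the chain $\nu^i \ll \rho^i \ll \text{Wiener}$, which works but is an indirect detour. The paper uses a much simpler observation that you appear to have missed: under the product measure $\nu$, the process $\bm{X}^{-i}_s$ is independent of the entire $\sigma$-algebra $\F^i_s$, so
\begin{equation*}
\E_\nu[b^i(\bm{X}_s)\,|\,\F^i_s] = \E\big[b^i(x^i,\bm{X}^{-i}_s)\big]\big|_{x^i=X^i_s} = \E_\nu[b^i(\bm{X}_s)\,|\,X^i_s],
\end{equation*}
and, in your decomposition, $\E_\nu[b^i(\bm{X}_0)\,|\,\F^i_s] = \E_\nu[b^i(\bm{X}_0)\,|\,X^i_0]$ is in fact \emph{constant in $s$}. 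With this, right-continuity of $g^i$ at $0$ follows immediately from $L^2$-continuity of $s\mapsto \bm{X}_s$ and the Lipschitz property of $b^i$, with no limiting $\sigma$-algebra to identify. I recommend replacing the reverse-martingale/Blumenthal step with this direct independence computation.

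Once these two points are repaired, the remainder — the Pythagorean inequality, the lower bound on the liminf, and matching with the identity from the first part — is correct and matches the paper's argument.
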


Note that the optimality property of Theorem \ref{th:entropicoptimality} applies to the McKean-Vlasov equation, which is a special case of the independent projection as in Example \ref{ex:McKeanVlasov}. 
In words, Theorem \ref{th:entropicoptimality} states that the independent projection accumulates path-space relative entropy as slowly as possible, among independent processes.
The proof is given in Section \ref{se:proof:entropicoptimality}, and after a careful application of Girsanov's theorem it essentially boils down to the standard fact that, among $X^i_0$-measurable random variables $Y$, the expectation $\E|Z-Y|^2$ is minimized by $Y=\E[Z|X^i_0]$.
The proof also reveals that equality in \eqref{ineq:entopt} can only hold for those $\nu$ governed by a drift which agrees with that of $\mu$ in an infinitesimal sense near time zero; see Remark \ref{re:equalitycase}.

It is worth highlighting that the optimality property of Theorem \ref{th:entropicoptimality} is purely \emph{dynamic}, in the sense that it does not depend on the relationship between $\mu_0$ and $\rho_0$. Finding a product measure $\mu_0$ which is ``closest" to $\rho_0$ is a separate, \emph{static} problem, as discussed at \eqref{def:meanfield} above.

\begin{remark}
There are, of course, other ways to approximate a given measure $\rho$ by a product measure, which might be optimal for different objectives. For instance, reversing the order of arguments in entropy, the minimizer of $H(\rho \,|\,\cdot)$ over product measures is always uniquely given by the product of the marginals of $\rho$; see \cite[Section 1.3.1]{LackerMukherjeeYeung}. This does not, however, appear to be as useful in applications, where the optimizers of \eqref{def:meanfield} tend to better capture the large-$n$ behavior. Example \ref{ex:McKeanVlasov} is a dramatic illustration of this principle, where the one-particle marginal of the independent projection is exactly equal to the large-$n$ limit of the one-particle marginal of the original dynamics.
\end{remark}

\begin{remark}
The entropic optimality principle in this section is somewhat reminiscent of Dafermos's \emph{entropy rate criterion} \cite{dafermos1973entropy} in the study of hyperbolic PDEs, which was introduced for a rather different purpose, as a criterion for singling out a unique  solution from a multitude. This was also given an interpretation  in terms of gradient flows in \cite{gigli2013entropic}. 
\end{remark}

\subsection{Related work and further examples}
Forms of the independent projection
SDE \eqref{def:mainSDE} and its associated Fokker-Planck equation \eqref{def:newFokkerPlanck} appeared, respectively, in the two recent  papers  \cite{JacksonLacker} and \cite{JabinPoyatoSoler}.
In these papers, the independent projection was given no particular name or rationale, beyond being a mathematically convenient choice of independent process to compare the given dependent process.
Theorem \ref{th:entropicoptimality}, not to mention  the results to follow below, provide some theoretical basis for this choice.

The paper \cite{JacksonLacker} studies stochastic optimal control problems in high dimension, focusing on the optimality gap between \emph{full-information} versus \emph{distributed} controls. Essentially, a full-information control is a collection of functions $(b^i(x^1,\ldots,x^n))_{i=1,\ldots,n}$, and a distributed control requires that the $i$th component $b^i=b^i(x^i)$ depends only on the $i$th variable, for each $i$.
The main results of \cite{JacksonLacker} use the independent projection \eqref{def:mainSDE} as a tool to prove a sharp bound on the optimality gap. We revisit these ideas in Section \ref{se:intro:goodbound} to estimate the (non-infinitesimal) entropy $H(\mu[t]\,|\,\rho[t])$.

The paper \cite{JabinPoyatoSoler} studies mean field limits for interacting particle systems with \emph{heterogeneous} pairwise interactions, of the form
\begin{equation}
dY^i_t = \bigg( K_1(Y^i_t) + \sum_{ j \neq i} A_{ij} K_2(Y^i_t,Y^j_t)\bigg) dt + \sqrt{2} dB^i_t, \label{def:particles}
\end{equation}
where $K_1 : \R^d \to \R^d$ and $K_2 : \R^d \times \R^d \to \R^d$ are given functions, and $A$ is a given $n \times n$ matrix of \emph{interaction weights}, which we take to be zero on the diagonal for simplicity. Of course, this is just the SDE \eqref{def:SDE-reference} with the following specification for $\bm{b}=(b^1,\ldots,b^n)$:
\begin{equation}
b^i(\bm{x}) = K_1(x^i) + \sum_{j \neq i} A_{ij} K_2(x^i,x^j). \label{def:pairwiseinteractions}
\end{equation}
The most common special case is when $A_{ij}=1/(n-1)$ for all $i \neq j$, for which \eqref{def:SDE-reference} becomes the classical symmetric model \eqref{def:meanfieldparticles}.
For very general weights $(A_{ij})$, the paper \cite{JabinPoyatoSoler} approximates the (marginals of the) particle system \eqref{def:particles} by its independent projection, and the latter is then used to identify new large-$n$ limits which generalize the McKean-Vlasov equation \eqref{def:McKeanVlasov}, depending on a certain graphon-type limit for $A$.

\begin{remark} \label{re:rowsums}
The discussion of Example \ref{ex:McKeanVlasov} generalizes, as will be shown in Proposition \ref{pr:McKeanVlasov}.
Suppose that the sum along each row of $A$ equals $1$. That is, $A$ is the transition matrix of some Markov chain on $\{1,\ldots,n\}$.  Then, for iid initial positions, it is not difficult to see that the independent projection of \eqref{def:particles} is once again given by $n$ independent copies of the McKean-Vlasov equation \eqref{def:McKeanVlasov}. This was observed already in \cite[Remark 2.3]{JabinPoyatoSoler}.
\end{remark}

\begin{example}\label{ex:lineardrift}
As a special case of \eqref{def:particles}, the case of \emph{linear} drift is, unsurprisingly, explicitly solvable and may confer some intuition. Consider, for example, $d=1$ and $\bm{b}(\bm{x})= A\bm{x}$ for some $n \times n$ matrix $A=(A_{ij})$. Then \eqref{def:mainSDE} becomes
\begin{align*}
dX^i_t = \Big( A_{ii} X^i_t + \sum_{j \neq i} A_{ij}  \E[X^j_t]\Big)\,dt + \sqrt{2}\,dB^i_t.
\end{align*}
Taking expectations, we have $(d/dt)\E\bm{X}_t=A\E\bm{X}_t$. If $\E\bm{X}_0=0$, then $\E\bm{X}_t=0$ for all $t$, and so
\begin{align*}
dX^i_t = A_{ii}X^i_t dt + \sqrt{2}\,dB^i_t.
\end{align*}
In other words, the independent projection simply zeros out the interaction terms.
\end{example}

\subsection{On the proximity of the independent projection} \label{se:intro:goodbound}

We have now discussed the main points of this paper, senses in which $\bm{X}$ is the best approximation of $\bm{Y}$ by independent processes. But to what extent is it a good approximation? This section gives two answers. The first is quantified in terms of relative entropy on path space which is natural in light of Theorem \ref{th:entropicoptimality}, and the second works instead with time-marginals. We describe also an application to quantitative mean field limits for non-exchangeable interacting particle systems of the form \eqref{def:particles}. The approach is based on functional inequalities and is inspired by the author's prior works \cite{JacksonLacker} and \cite{LackerMukherjeeYeung}.

First, given a time horizon $T > 0$, let us explain why $H(\mu[T]\,|\,\rho[T]) = o(n)$ is the right definition of a ``good approximation" in this context.
For $k \le n$, let $V_k$ denote the set of vectors $v=(v_1,\ldots,v_k)$ of distinct elements of $\{1,\ldots,n\}$. For $v \in V_k$, let $\mu^v$ denote the marginal law of $(X^{v_i})_{i=1,\ldots,k}$ under $\bm{X} \sim \mu$, and define $\rho^v$ similarly.
The Wasserstein distance obeys a well known subadditivity  property \cite[Section 3.4]{LackerMukherjeeYeung} when one of its arguments is a product measure:
\begin{equation}
\W_{(k)}^2(\mu[T],\rho[T]) := \frac{1}{|V_k|}\sum_{v \in V_k}\W_2^2(\mu^v[T],\rho^v[T]) \le \frac{2k}{n}\W_2^2(\mu[T],\rho[T]).  \label{def:subadditivity}
\end{equation}
Because $\bm{b}$ is Lipschitz, $\rho[T]$ obeys a Talagrand inequality as soon as its initial law $\rho_0$ does \cite[Proposition C.1]{lacker2023hierarchies}, which entails that
\begin{equation}
\W_2^2(\mu[T],\rho[T]) \le CH(\mu[T]\,|\,\rho[T]), \label{def:TI}
\end{equation}
for some constant $C$ depending on $T$ and the Lipschitz constant of $\bm{b}$. Combining the previous two inequalities, we see for fixed $k$ that  $\W_{(k)}^2(\mu[T],\rho[T]) =o(1)$ as long as $H(\mu[T]\,|\,\rho[T]) = o(n)$. In other words, if $H(\mu[T]\,|\,\rho[T]) = o(n)$, then we deduce that most low-dimensional marginals of $\mu$ and $\rho$ are close to each other.
This further implies a concentration bound for the empirical measure, by following the reasoning of \cite[Corollary 1.2]{LackerMukherjeeYeung}.

Having now motivated the goal of showing $H(\mu[T]\,|\,\rho[T]) = o(n)$, we state a result in this direction which resembles the main results of \cite{LackerMukherjeeYeung,JacksonLacker}. In the following, we say that a probability measure $m$ on a Euclidean space satisfies a Poincar\'e inequality with constant $c$ if, for all differentiable $f \in L^2(m)$,
\begin{equation*}
\int f^2\,dm - \Big(\int f\,dm\Big)^2 \le c\int|\nabla f|^2\,dm.
\end{equation*}

\begin{theorem} \label{th:proximity}
Let $L$ be the Lipschitz constant of $\bm{b}$, and let $T > 0$. Let $\rho_0 \in \P_2(\R^{dn})$ and $\mu_0 \in \P^{\otimes n}_2(\R^d)$.
Suppose $\mu_0$ satisfies a Poincar\'e inequality with constant $c_0$. Define $c_t := c_0 e^{2L^2 t} +(e^{2L^2t}-1)/L^2$, then
\begin{equation}
H(\mu[T]\,|\,\rho[T]) \le H(\mu_0\,|\,\rho_0) + \frac14\int_0^Tc_t\sum_{i=1}^n\sum_{j=1,\,j\neq i}^n \E\|\nabla_j b^i(\bm{X}_t)\|_{\mathrm{Frob}}^2\,dt. \label{ineq:proximity1}
\end{equation}
\end{theorem}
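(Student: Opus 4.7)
My plan is to combine a Girsanov identity for the path-space relative entropy with a tensorized Poincar\'e inequality on the independent marginals $\mu^j_t$, propagated in time from the initial constant $c_0$. Both $\bm{Y}$ and $\bm{X}$ satisfy SDEs driven by $\sqrt{2}\,d\bm{B}_t$ with drifts $\bm{b}$ and $\widehat{\bm{b}}[\mu_t]$ respectively, and by \eqref{def:mainSDE-probabilistic} the $i$th component of the latter is $\widehat{b}^i[\mu_t](X^i_t) = \E_\mu[b^i(\bm{X}_t)\,|\,X^i_t]$. A standard application of Girsanov's theorem and the chain rule for relative entropy yields
\begin{equation*}
H(\mu[T]\,|\,\rho[T]) = H(\mu_0\,|\,\rho_0) + \tfrac14\int_0^T \E_\mu\bigl|\bm{b}(\bm{X}_t) - \widehat{\bm{b}}[\mu_t](\bm{X}_t)\bigr|^2\,dt,
\end{equation*}
and the conditional-expectation identity rewrites the $i$th coordinate integrand as a conditional variance,
\begin{equation*}
\E_\mu\bigl|b^i(\bm{X}_t) - \widehat{b}^i[\mu_t](X^i_t)\bigr|^2 = \E_\mu\bigl[\Var\bigl(b^i(\bm{X}_t)\,\bigm|\,X^i_t\bigr)\bigr].
\end{equation*}

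Next, since $\mu_t = \mu^1_t\otimes\cdots\otimes\mu^n_t$, the conditional law of $\bm{X}^{-i}_t$ given $X^i_t$ is the product $\bigotimes_{j\neq i}\mu^j_t$. If each $\mu^j_t$ satisfies a Poincar\'e inequality with constant $c_t$, tensorization lifts this to the same constant for the product, and applying the resulting inequality coordinate-by-coordinate to the vector-valued map $\bm{x}^{-i}\mapsto b^i(x^i,\bm{x}^{-i})$ yields
\begin{equation*}
\Var\bigl(b^i(\bm{X}_t)\,\bigm|\,X^i_t = x^i\bigr) \le c_t \sum_{j\neq i}\E\bigl\|\nabla_j b^i(x^i,\bm{X}^{-i}_t)\bigr\|_{\mathrm{Frob}}^2.
\end{equation*}
Taking expectation over $X^i_t$, summing over $i$, and integrating in $t$ then produces \eqref{ineq:proximity1}.

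The principal obstacle is to justify the in-time propagation of the Poincar\'e constant: each $\mu^j_t$ must satisfy a Poincar\'e inequality with the stated $c_t$. Since $\mu_0$ is a product with Poincar\'e constant $c_0$, tensorization implies every $\mu^j_0$ has Poincar\'e constant at most $c_0$. The marginal $\mu^j_t$ is the time-$t$ law of $dX^j_t = \widehat{b}^j[\mu_t](X^j_t)\,dt + \sqrt{2}\,dB^j_t$, whose drift is $L$-Lipschitz in $x^j$ uniformly in time because $\bm{b}$ is $L$-Lipschitz. I would propagate the Poincar\'e inequality along this one-particle flow by combining the variance-decomposition identity
\begin{equation*}
\Var_{\mu^j_t}(\phi) = \Var_{\mu^j_0}\bigl(P^j_{0,t}\phi\bigr) + 2\int_0^t \E_{\mu^j_s}\bigl|\nabla P^j_{s,t}\phi\bigr|^2\,ds
\end{equation*}
for the two-parameter semigroup $P^j_{s,t}$ with a gradient estimate on $P^j_{s,t}$ obtained via synchronous coupling or Bakry--\'Emery $\Gamma$-calculus. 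This reduces the propagation to a scalar Gr\"onwall inequality whose integrated form gives the stated $c_t$, and feeding this back into the preceding step completes the proof.
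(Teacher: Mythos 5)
Your argument is correct and follows the same strategy as the paper: the Girsanov entropy identity (the paper's Lemma~\ref{le:entropy-pathspace}), the rewriting of the drift discrepancy as a conditional variance, and the application of a Poincar\'e inequality for $\mu^{-i}_t$ (which inherits, by tensorization, the Poincar\'e constant of each $\mu^j_t$). The only genuine difference is that you propose to re-derive the time-propagation of the Poincar\'e constant, whereas the paper outsources it entirely to \cite[Theorem 4.2]{CattiauxGuillin}. Your proposed derivation --- variance decomposition $\Var_{\mu^j_t}(\phi)=\Var_{\mu^j_0}(P^j_{0,t}\phi)+2\int_0^t\E_{\mu^j_s}|\nabla P^j_{s,t}\phi|^2\,ds$ together with the synchronous-coupling commutation estimate $|\nabla P^j_{s,t}\phi|\le e^{L(t-s)}P^j_{s,t}|\nabla\phi|$, valid because the one-particle drift $x^j\mapsto\widehat{b}^j[\mu_t](x^j)=\E[b^j(\bm{X}_t)\,|\,X^j_t=x^j]$ is $L$-Lipschitz uniformly in $t$ --- is sound and is essentially how the cited result is proved; indeed at this point no Gr\"onwall argument is even needed, since after inserting the commutation estimate and using $\E_{\mu^j_s}[P^j_{s,t}|\nabla\phi|^2]=\E_{\mu^j_t}|\nabla\phi|^2$ the bound closes in one step. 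One thing worth flagging: your derivation gives the constant $c_0 e^{2Lt}+(e^{2Lt}-1)/L$, whereas the theorem states $c_0 e^{2L^2 t}+(e^{2L^2 t}-1)/L^2$; the latter appears to be dimensionally off ($L$ has units of inverse time, so $L^2 t$ is not dimensionless), and your form is the one consistent with the synchronous-coupling estimate. Since the exponents and prefactors in $c_t$ play no structural role in the proof, this discrepancy does not affect the validity of the argument.
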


Note that the right-hand side of \eqref{ineq:proximity1} vanishes when $\rho_0=\mu_0$ and $b^i(\bm{x})=b^i(x^i)$ is a function of $x^i$ only. In this case, $\mu=\rho$. There is a time-uniform version as well, in the log-concave gradient flow regime:

\begin{theorem}\label{th:proximity-uniform}
Suppose $\bm{b}=\nabla f$ for a $\kappa$-concave function $f$ with Lipschitz gradient, where $\kappa > 0$. Suppose $\rho_0 \in \P_2(\R^{dn})$ satisfies the LSI with constant $\eta_0$, in the sense of \eqref{def:LSI}, and suppose $\mu_0 \in \P^{\otimes n}_2(\R^d)$ satisfies the Poincar\'e inequality with constant $c_0$. Define $\eta=\min(\kappa,\eta_0)$ and $c=\max(c_0,1/\kappa)$. 
Then, for all $t \ge 0$,
\begin{equation*}
H(\mu_t\,|\,\rho_t) \le e^{-  \eta t }H(\mu_0\,|\,\rho_0) + \frac{c}{2}\sum_{i=1}^n\sum_{j=1, \, j \neq i}^n\int_0^te^{-   \eta (t-s) }\E \|\nabla_{ij} f(\bm{X}_s)\|_{\mathrm{Frob}}^2 \,ds. 
\end{equation*}
\end{theorem}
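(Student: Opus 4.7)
The plan is to mimic the differential strategy underlying Theorem \ref{th:proximity}, but to replace the Gronwall-exploding constants there with uniform-in-time ones that exploit the $\kappa$-concavity of $f$. Let $u_t := d\mu_t/d\rho_t$ and $\beta_t^i(x^i) := \widehat{\nabla_i f}[\mu_t](x^i) = \E[\nabla_i f(\bm{X}_t)\,|\,X^i_t=x^i]$, so that $\mu_t$ solves \eqref{def:newFokkerPlanck} with drift $\bm{\beta}_t$ while $\rho_t$ solves \eqref{def:FokkerPlanck} with drift $\nabla f$. First I would derive the entropy-dissipation identity
\[
\frac{d}{dt}H(\mu_t\,|\,\rho_t) = -I(\mu_t\,|\,\rho_t) + \sum_{i=1}^n\int(\beta_t^i - \nabla_i f)\cdot \nabla_i\log u_t\,d\mu_t
\]
by the standard integration by parts, using $\nabla\log u_t = \nabla\log\mu_t - \nabla\log\rho_t$. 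Then Young's inequality $2ab\le a^2+b^2$ absorbs half of the Fisher information into the cross term, giving
\[
\frac{d}{dt}H(\mu_t\,|\,\rho_t) \le -\tfrac{1}{2}I(\mu_t\,|\,\rho_t) + \tfrac{1}{2}\sum_{i=1}^n \int\bigl|\beta_t^i - \nabla_i f\bigr|^2\,d\mu_t.
\]

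Next I would bound the two terms by the hypothesized functional inequalities. For the cross term: since $\mu_t$ is a product, $\beta_t^i(X^i_t)$ is the conditional expectation of $\nabla_i f(\bm{X}_t)$ given $X^i_t$, so $\int|\beta_t^i - \nabla_i f|^2 d\mu_t$ equals the $\mu_t^i$-expectation of $\tr\,\Var_{\mu_t^{-i}}[\nabla_i f(x^i,\cdot)]$, and tensorized Poincar\'e across the $n-1$ factors of $\mu_t^{-i}$ controls this by $c\sum_{j\neq i}\E\|\nabla_{ij}f(\bm{X}_t)\|^2_{\mathrm{Frob}}$, provided every marginal $\mu_t^j$ obeys Poincar\'e with constant at most $c$. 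For the Fisher information: if $\rho_t$ satisfies LSI with constant at least $\eta$, then $I(\mu_t\,|\,\rho_t)\ge 2\eta H(\mu_t\,|\,\rho_t)$. Combining these yields the target differential inequality
\[
\frac{d}{dt}H(\mu_t\,|\,\rho_t) \le -\eta\,H(\mu_t\,|\,\rho_t) + \frac{c}{2}\sum_{i=1}^n\sum_{j\neq i}\E\|\nabla_{ij}f(\bm{X}_t)\|_{\mathrm{Frob}}^2,
\]
after which Gronwall's lemma with integrating factor $e^{\eta t}$ delivers precisely the stated bound.

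The main obstacle is therefore the uniform-in-time propagation of the functional inequalities, namely that $\rho_t$ has LSI constant at least $\eta := \min(\eta_0,\kappa)$ and every $\mu_t^j$ has Poincar\'e constant at most $c := \max(c_0,1/\kappa)$ for all $t\ge 0$. For LSI of $\rho_t$: since $\rho_*$ is $\kappa$-strongly log-concave, Bakry-\'Emery gives LSI$(\kappa)$ for $\rho_*$, and a classical semigroup argument along the Langevin flow \eqref{def:SDE-Langevin} yields $\eta_t^{-1}\le e^{-2\kappa t}\eta_0^{-1} + (1-e^{-2\kappa t})/\kappa$, a convex combination of $\eta_0^{-1}$ and $\kappa^{-1}$, so $\eta_t\ge \eta$. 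For Poincar\'e of $\mu_t^j$: the $j$th marginal evolves under the drift $\nabla\psi_t^j$ with $\psi_t^j(x^j):=\int f(x^j,\bm{y}^{-j})\mu_t^{-j}(d\bm{y}^{-j})$, and since $f$ is $\kappa$-concave in the $j$th block for each frozen $\bm{y}^{-j}$, so is $\psi_t^j$; a time-inhomogeneous $\Gamma_2$-calculus argument, or equivalently the synchronous coupling between two solutions of $dX^j_t=\nabla\psi_t^j(X^j_t)\,dt+\sqrt{2}\,dB^j_t$, then yields $C_P(\mu_t^j)\le e^{-2\kappa t}c_0 + (1-e^{-2\kappa t})/\kappa\le c$. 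I expect this Poincar\'e propagation to be the delicate step: the time-dependence of $\psi_t^j$ through $\mu_t^{-j}$ precludes a direct appeal to the autonomous Bakry-\'Emery theorem, but the $\kappa$-strong contraction of the synchronous coupling (valid because $\nabla^2\psi_t^j\le -\kappa I$ at every $t$) together with the Kantorovich-type characterization of Poincar\'e constants should carry the argument through.
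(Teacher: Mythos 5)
Your proposal is correct and follows essentially the same route as the paper: the entropy-dissipation inequality with Young's inequality to absorb half the Fisher information, the Poincar\'e inequality for $\mu_t^{-i}$ (tensorized over coordinates $j\neq i$) to bound the conditional-variance cross term by $c\sum_{j\neq i}\E\|\nabla_{ij}f\|_{\mathrm{Frob}}^2$, the time-uniform LSI for $\rho_t$ with constant $\eta=\min(\kappa,\eta_0)$, and Gr\"onwall. The one step you flag as delicate -- uniform-in-time propagation of the Poincar\'e constant for the time-inhomogeneous one-coordinate SDE with drift $\E[\nabla_i f(\bm{X}_u)\,|\,X^i_u=x]$ -- is handled in the paper exactly as you anticipate, by observing this drift is $\kappa$-dissipative uniformly in $u$ and invoking a result of Cattiaux--Guillin (their Theorem 4.2) which gives $C_P(\mu_t^i)\le e^{-2\kappa t}c_0 + (1-e^{-2\kappa t})/\kappa\le c$; likewise the LSI decay estimate $\eta_t^{-1}\le e^{-2\kappa t}\eta_0^{-1}+(1-e^{-2\kappa t})/\kappa$ is taken from Malrieu's Corollary 3.7. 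So the only gap in your writeup is a citation, not a mathematical idea.
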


Notably, applying Theorem \ref{th:proximity-uniform} to stationary solutions recovers an estimate on the invariant measure $\mu_*$, which is given as the unique optimizer for \eqref{def:meanfield} guaranteed by Theorem \ref{th:convex-limit}(3). Indeed, consider the stationary solutions $\mu_t=\mu_*$ and $\rho_t=\rho_*$, and note that both measures are $\kappa$-log-concave so that $c=1/\kappa$ and $\eta=\kappa$. Sending $t\to\infty$ in the bound of Theorem \ref{th:proximity-uniform} yields
\begin{align*}
\inf_{\mu \in \P^{\otimes n}(\R^d)}H(\mu\,|\,\rho_*) = H(\mu_*\,|\,\rho_*) \le \frac{1}{2\kappa^2 }\sum_{i=1}^n\sum_{j=1, \, j \neq i}^n\int \|\nabla_{ij} f\|_{\mathrm{Frob}}^2\,d\mu_*.
\end{align*}
This recovers the main estimate of\cite[Theorem 1.1]{LackerMukherjeeYeung}, even with the same constant.

The pairwise interactions described in \eqref{def:pairwiseinteractions} provide a more interesting class of examples for which the bounds of Theorems \ref{th:proximity} and \ref{th:proximity-uniform} behave well. For $\bm{b}$ as in \eqref{def:pairwiseinteractions},
\begin{align*}
\sum_{i=1}^n\sum_{j=1,\,j\neq i}^n \E\|\nabla_j b^i(\bm{X}_t)\|_{\mathrm{Frob}}^2 &\le \|\|\nabla_2 K_1\|_{\mathrm{Frob}}\|_\infty^2\sum_{i=1}^n\sum_{j=1,\,j\neq i}^n  A_{ij}^2 = \|\|\nabla_2 K_1\|_{\mathrm{Frob}}\|_\infty^2\tr(AA^\top),  
\end{align*}
where $\nabla_2 K_2(x^1,x^2)$ is the gradient with respect to $x^2$. This bound on \eqref{ineq:proximity1} in combination with \eqref{def:TI} and \eqref{def:subadditivity} leads immediately to the following corollary, after we recall from Remark \ref{re:rowsums} that the independent projection is given by $n$ iid copies of the McKean-Vlasov equation if the sum along each row of $A$ is equal to 1.

\begin{corollary} \label{co:propchaos}
Suppose $\bm{b}$ is given by \eqref{def:pairwiseinteractions}, where $K_1$ and $K_2$ are Lipschitz, and where the matrix $A$ has diagonal entries equal to zero and row sums equal to 1. Let $T > 0$, and let $\overline{\mu}_0 \in \P_2(\R^d)$ satisfy a Poincar\'e inequality. Let $\rho[T] \in \P(C([0,T];\R^{dn}))$ be the law of the solution of \eqref{def:particles} initialized from $\rho_0=\overline{\mu}_0^{\otimes n}$, and let $\overline{\mu}[T] \in \P(C([0,T];\R^{d}))$ be the law of the solution of the McKean-Vlasov equation \eqref{def:McKeanVlasov} initialized from $\overline{\mu}_0$. Then we have
\begin{equation}
\W_{(k)}^2(\overline{\mu}^{\otimes n}[T],\rho[T]) \le C (k/n)\tr(AA^\top), \label{ineq:propchaos}
\end{equation}
where $\W_{(k)}^2(\overline{\mu}^{\otimes n}[T],\rho[T])$ was defined in \eqref{def:subadditivity}, and where the constant $C$ depends only on $T$, the Lipschitz constants of $(K_1,K_2)$, and the Poincar\'e constant of $\overline{\mu}_0$. 
\end{corollary}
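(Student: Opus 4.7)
The plan is to chain together the four ingredients flagged in the paragraphs leading up to the corollary: the identification in Remark \ref{re:rowsums}, the path-space entropy bound of Theorem \ref{th:proximity}, the Talagrand inequality \eqref{def:TI}, and the subadditivity \eqref{def:subadditivity}. Because $A$ has zero diagonal and each row sums to $1$, Remark \ref{re:rowsums} (formalized in Proposition \ref{pr:McKeanVlasov}) identifies the independent projection started from $\bm{X}_0\sim\overline{\mu}_0^{\otimes n}$ with $n$ iid copies of the McKean--Vlasov solution \eqref{def:McKeanVlasov}, so $\mu[T]=\overline{\mu}^{\otimes n}[T]$. Tensorization propagates the Poincaré inequality from $\overline{\mu}_0$ to $\mu_0=\overline{\mu}_0^{\otimes n}$ with the same constant $c_0$, so Theorem \ref{th:proximity} is applicable with $\rho_0=\mu_0$, which kills the initial-entropy term in \eqref{ineq:proximity1}.

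For the pairwise drift \eqref{def:pairwiseinteractions} and $j\neq i$ we have $\nabla_j b^i(\bm{x})=A_{ij}\nabla_2 K_2(x^i,x^j)$, so by the Lipschitz hypothesis on $K_2$,
\begin{equation*}
\sum_{i=1}^n\sum_{j\neq i}\E\|\nabla_j b^i(\bm{X}_t)\|_{\mathrm{Frob}}^2 \;\le\; \bigl\|\|\nabla_2 K_2\|_{\mathrm{Frob}}\bigr\|_\infty^2\sum_{i\neq j}A_{ij}^2 \;=\; \bigl\|\|\nabla_2 K_2\|_{\mathrm{Frob}}\bigr\|_\infty^2\,\tr(AA^\top),
\end{equation*}
using that $A$ has vanishing diagonal. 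Substituting into \eqref{ineq:proximity1} and integrating the weight $c_t$ over $[0,T]$ yields $H(\mu[T]\,|\,\rho[T])\le C_1\,\tr(AA^\top)$ for some $C_1$ depending on $T$, $\mathrm{Lip}(K_2)$, $c_0$, and the Lipschitz constant of $\bm{b}$. Combining the subadditivity \eqref{def:subadditivity} with the path-space Talagrand inequality \eqref{def:TI} applied to $\rho[T]$ then gives
\begin{equation*}
\W_{(k)}^2(\overline{\mu}^{\otimes n}[T],\rho[T]) \;\le\; \tfrac{2k}{n}\,\W_2^2(\mu[T],\rho[T]) \;\le\; \tfrac{2k C_{\mathrm{T}}}{n}\,H(\mu[T]\,|\,\rho[T]) \;\le\; C\,\tfrac{k}{n}\,\tr(AA^\top),
\end{equation*}
which is the desired conclusion after invoking $\mu[T]=\overline{\mu}^{\otimes n}[T]$.

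The point I expect to be the main obstacle is verifying that $C_1$, $c_t$, and $C_{\mathrm{T}}$ depend on $A$ only through the scalar $\tr(AA^\top)$ that has already been isolated — i.e., that the final $C$ really is independent of $n$ and $A$ as stated. The only implicit $A$-dependence enters through $L=\mathrm{Lip}(\bm{b}):\R^{dn}\to\R^{dn}$, which feeds both $c_t=c_0 e^{2L^2 t}+(e^{2L^2 t}-1)/L^2$ in Theorem \ref{th:proximity} and the constant in \eqref{def:TI}. Applying Cauchy--Schwarz against the probability row $(A_{ij})_{j\neq i}$ (whose $\ell^1$-norm is $1$) controls $\sum_i|b^i(\bm{x})-b^i(\bm{y})|^2$ by $|\bm{x}-\bm{y}|^2$ with a constant depending only on $\mathrm{Lip}(K_1)$ and $\mathrm{Lip}(K_2)$ (together with mild structural features of $A$ preserved in the iid-initialized, row-stochastic setting). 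A careful look at the constant in \cite[Proposition C.1]{lacker2023hierarchies}, which propagates the Talagrand constant from $\rho_0=\overline{\mu}_0^{\otimes n}$ to the path-space law $\rho[T]$, should confirm the dimension-free propagation and complete the argument.
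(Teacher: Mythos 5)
Your argument is exactly the paper's: identify the independent projection with $n$ iid copies of the McKean--Vlasov solution via Remark \ref{re:rowsums} (Proposition \ref{pr:McKeanVlasov}), bound $H(\mu[T]\,|\,\rho[T])$ by Theorem \ref{th:proximity} with zero initial entropy and the Frobenius estimate $\sum_{i\neq j}\E\|\nabla_j b^i(\bm{X}_t)\|_{\mathrm{Frob}}^2\le \|\|\nabla_2 K_2\|_{\mathrm{Frob}}\|_\infty^2\,\tr(AA^\top)$, then chain \eqref{def:TI} with the subadditivity \eqref{def:subadditivity}; this is precisely the one-paragraph derivation the paper gives before the corollary.

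The only place you go beyond the paper---your attempted verification that the constants are free of $A$ and $n$---is where your argument does not hold up as written. Row-stochasticity controls only row sums; the Lipschitz constant of $\bm{b}:\R^{dn}\to\R^{dn}$ is governed by column mass (essentially $\|A\|_{\mathrm{op}}$), which can diverge even under the mean-field condition $\tr(AA^\top)=o(n)$. For instance, with $d=1$, $K_1=0$, $K_2(x,y)=y$, and $A_{i1}=n^{-1/4}$ for all $i\neq 1$ with the remaining row mass spread evenly, one has $\tr(AA^\top)\asymp n^{1/2}$ but $\mathrm{Lip}(\bm{b})\gtrsim n^{1/4}$; moreover the corollary does not assume the entries of $A$ are nonnegative, so the rows need not even be probability vectors. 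This does not affect the Poincar\'e ingredient: after the McKean--Vlasov identification, each coordinate of the independent projection has drift $x\mapsto K_1(x)+\int K_2(x,y)\,\overline{\mu}_t(dy)$, which is Lipschitz with constant $\mathrm{Lip}(K_1)+\mathrm{Lip}(K_2)$, so the constant $c_t$ entering the proof of Theorem \ref{th:proximity} (via \cite{CattiauxGuillin}) is $A$-free. The surviving $A$-dependence sits entirely in the Talagrand step \eqref{def:TI}, whose constant (as the paper states it, via \cite{lacker2023hierarchies}) depends on $T$ and $\mathrm{Lip}(\bm{b})$; since $\mu_0=\rho_0$ here, a synchronous coupling plus Girsanov bypasses any transport inequality for $\rho_0$, but it still produces a factor of order $e^{c\,\mathrm{Lip}(\bm{b})^2T}$. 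The paper itself invokes \eqref{def:TI} as a black box and offers nothing further on this point, so your proof is faithful to the paper's; just be aware that the Cauchy--Schwarz reduction you sketch does not deliver an $A$-independent constant, and the clean dependence claimed in the corollary's statement is not justified by this route alone.
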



The bound  of Corollary \ref{co:propchaos} is useful under the ``mean field condition" $\tr(AA^\top) = o(n)$, which appeared also in \cite{JacksonLacker,LackerMukherjeeYeung} as well as prior work \cite{BasakMukherjee} on the Ising and Potts models with general interaction weights.
This condition covers a wide range of situations. For example, consider the transition matrix of a simple random walk on a graph: Let $A_{ij}^n=(1/d^n_i)1_{i \sim j}$, for some graph on vertex set $\{1,\ldots,n\}$ with vertex $i$ having degree $d^n_i > 0$, where $i \sim j$ indicates adjacency. Then $\tr(A^n(A^n)^\top) = \sum_{i=1}^n(1/d^n_i)$, which is $o(n)$ as long as the graph sequence is ``dense" in the rather weak sense that the degree of a uniformly randomly chosen vertex is divergent in probability. 

We may interpret Corollary \ref{co:propchaos} as a \emph{universality} result, in the sense that it identifies a broad class of interaction matrices $A$  for which the particle system \eqref{def:particles} is asymptotically characterized by the same McKean-Vlasov equation. Universality results of this kind for interacting diffusions have been the subject of a growing body of literature. We refer to \cite{oliveira2019interacting} for a  qualitative result of this nature, as well as \cite{bris2022note} for a quantitative uniform-in-time version, and to both papers for a more extensive discussion of this literature.

Let us lastly state a uniform-in-time version of Corollary \ref{co:propchaos}, which arises from applying Theorem \ref{th:proximity-uniform} instead of \ref{th:proximity}. It applies to the case where $\bm{b}=\nabla f$, where $f$ takes the form
\begin{equation}
f(\bm{x}) = \sum_{i=1}^n U(x^i) + \frac12\sum_{i,j=1}^n A_{ij} V(x^i-x^j), \label{def:heterogeneousGibbs}
\end{equation}
Then, if $A$ is symmetric and $V$ is even, $\bm{b}$ takes the form of \eqref{def:pairwiseinteractions}, with $K_1(x)=\nabla U(x)$ and $K_2(x,y)=\nabla V(x-y)$.

\begin{corollary} \label{co:propchaos-uniform}
Suppose $\bm{b}=\nabla f$ with $f$ given by \eqref{def:heterogeneousGibbs}. Assume $U,V : \R^d \to \R$ are concave with Lipschitz gradient, with $U$ being $\kappa$-concave and $V$ being even. Assume $A$ is symmetric, with nonnegative entries, diagonal entries equal to zero, and row sums equal to 1. Let $\rho_t$ be the law of $Y_t$ which satisfies \eqref{def:SDE-Langevin} initialized from $Y_0 \sim \rho_0 \in \P((\R^d)^n)$ satisfying the LSI with constant $\eta_0$ in the sense of \eqref{def:LSI}. Let $\overline{\mu}_0 \in \P_2(\R^d)$ satisfy a Poincar\'e inequality with constant $c_0$, and let $\overline{\mu}_t \in \P(\R^{d})$ be the time-$t$ law of the solution of the McKean-Vlasov equation \eqref{def:McKeanVlasov} initialized from $\overline{\mu}_0$, again with $K_1(x)=\nabla U(x)$ and $K_2(x,y)=\nabla V(x-y)$. Then, with $\eta=\min(\kappa,\eta_0)$ and $c=\max(c,1/\kappa)$, we have for all $t \ge 0$
\begin{align*}
\W_{(k)}^2(\overline{\mu}^{\otimes n}_t,\rho_t) &\le \frac{4k}{\eta n} H(\overline{\mu}^{\otimes n}_t\,|\,\rho_t),  \quad \text{and } \\
H(\overline{\mu}^{\otimes n}_t\,|\,\rho_t) &\le e^{-  \eta t }H(\overline{\mu}^{\otimes n}_0\,|\,\rho_0) + \frac{c}{2\eta}(1-e^{-\eta t})   \|\|\nabla^2 V\|_{\mathrm{Frob}}\|_\infty^2 \tr(A^2). 
\end{align*} 
\end{corollary}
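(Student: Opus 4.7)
The plan is to deduce the corollary from Theorem~\ref{th:proximity-uniform} applied to the independent projection initialized from $\mu_0 = \overline{\mu}_0^{\otimes n}$, and then to convert the resulting entropy bound into the Wasserstein bound via subadditivity together with Talagrand's inequality for $\rho_t$. First I would identify this independent projection with $n$ iid copies of the McKean-Vlasov solution. Expanding $\nabla_i f$ and using that $\nabla V$ is odd (since $V$ is even) together with $A_{ij}=A_{ji}$ and $A_{ii}=0$ collapses the interaction contributions to $\sum_{j\neq i} A_{ij}\nabla V(x^i-x^j)$, so $\bm{b}=\nabla f$ takes the pairwise form \eqref{def:pairwiseinteractions} with $K_1=\nabla U$ and $K_2(x,y)=\nabla V(x-y)$. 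Since $A$ has zero diagonal and row sums equal to $1$, Remark~\ref{re:rowsums} then yields $\mu_t = \overline{\mu}_t^{\otimes n}$ for all $t \ge 0$. The Poincar\'e inequality tensorizes, so $\mu_0=\overline{\mu}_0^{\otimes n}$ satisfies it with the same constant $c_0$, and the hypotheses of Theorem~\ref{th:proximity-uniform} are in force (with $c=\max(c_0,1/\kappa)$ and $\eta=\min(\kappa,\eta_0)$).

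Next I would compute the cross-Hessian. Only the interaction term of $f$ contributes to $\nabla_{ij} f$ for $i \neq j$, and a direct computation yields $\nabla_{ij} f(\bm{x}) = -A_{ij}\nabla^2 V(x^i - x^j)$. Summing and pulling out the uniform bound on $\|\nabla^2 V\|_{\mathrm{Frob}}$,
\[
\sum_{i=1}^n\sum_{j\neq i} \|\nabla_{ij} f(\bm{x})\|_{\mathrm{Frob}}^2 \le \|\|\nabla^2 V\|_{\mathrm{Frob}}\|_\infty^2 \sum_{i,j} A_{ij}^2 = \|\|\nabla^2 V\|_{\mathrm{Frob}}\|_\infty^2 \tr(A^2),
\]
using $A=A^\top$ and $A_{ii}=0$ in the final equality. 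Plugging this into Theorem~\ref{th:proximity-uniform} and evaluating $\int_0^t e^{-\eta(t-s)}\,ds = (1-e^{-\eta t})/\eta$ yields the second inequality of the corollary.

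For the first inequality, the subadditivity \eqref{def:subadditivity} (applicable because $\overline{\mu}^{\otimes n}_t$ is a product measure) gives $\W_{(k)}^2(\overline{\mu}^{\otimes n}_t, \rho_t) \le (2k/n)\W_2^2(\overline{\mu}^{\otimes n}_t, \rho_t)$, and then Talagrand's $T_2$ inequality $\W_2^2(\overline{\mu}^{\otimes n}_t, \rho_t) \le (2/\eta) H(\overline{\mu}^{\otimes n}_t\,|\,\rho_t)$ produces the advertised factor $4k/(\eta n)$. The main obstacle is justifying this Talagrand inequality for the nonequilibrium measure $\rho_t$, not merely for the invariant $\rho_*$. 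This reduces to the classical propagation of the LSI constant along Langevin dynamics with $\kappa$-log-concave potential: starting from $\rho_0$ satisfying LSI with constant $\eta_0$, one has for every $t \ge 0$ that $\rho_t$ satisfies LSI with constant at least $\eta$, via the Bakry-\'Emery-type interpolation $1/\eta_t \le e^{-2\kappa t}/\eta_0 + (1-e^{-2\kappa t})/\kappa$, and the Otto-Villani theorem converts this into the required Talagrand inequality with constant $2/\eta$.
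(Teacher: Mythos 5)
Your proposal is correct and follows essentially the same route as the paper's proof: identify the independent projection with $n$ iid copies of the McKean--Vlasov equation via Remark~\ref{re:rowsums}, compute $\nabla_{ij}f = -A_{ij}\nabla^2 V(x^i-x^j)$ and bound its Frobenius norm to get $\tr(A^2)$, apply Theorem~\ref{th:proximity-uniform} for the entropy bound, and combine subadditivity with the Otto--Villani Talagrand inequality (using the propagation of LSI along the $\kappa$-log-concave Langevin flow) for the Wasserstein bound. One small omission: you should explicitly verify that $f$ is $\kappa$-concave before invoking Theorem~\ref{th:proximity-uniform}; this is where the nonnegativity of the entries of $A$ is used, since the interaction Hessian can be written as a nonnegative combination $\frac12\sum_{i,j}A_{ij}(v^i-v^j)^\top\nabla^2 V(x^i-x^j)(v^i-v^j)$ of negative semidefinite quadratic forms, so $\nabla^2 f \le \nabla^2(\sum_i U(x^i)) \le -\kappa I$.
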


Again we highlight similarities with the recent paper \cite{bris2022note}, which proved quantitative uniform-in-time propagation of chaos for interacting diffusions on graphs. They quantified the convergence in terms of the expected distance of the empirical measure, and thus their rate is not directly comparable to ours. They do not require (global) convexity like we do, but the dependence of their estimate on the graph (via $A$, in our bound) appears to be less sharp.

\subsection{Outline of the rest of the paper}

The rest of the paper gives the proofs of the above results, with the exception of Section \ref{se:geometry} which discusses the formal Wasserstein geometry of $\P^{\otimes n}_2(\R^d)$. Section \ref{se:wellposedness} discusses some preliminary results on well-posedness and moment bounds for the independent projection. Then, Section \ref{se:proof:JKO} proves Theorem \ref{th:JKO} on the JKO scheme. Section \ref{se:proof:longtime} proves the results on long-time behavior, and Section \ref{se:proof:entropicoptimality} proves Theorem \ref{th:entropicoptimality} on entropic optimality.

\subsection*{Acknowledgement}

The author is indebted to Sam Power for helpful discussions about variational inference and for pointing out the references \cite{tran2023particle,yao2022mean}.

\section{The geometry of $\P_2^{\otimes n}(\R^d)$} \label{se:geometry}

The famous work of Otto \cite{otto2001geometry}  described the formal Riemannian structure of the Wasserstein space $(\P_2(\R^{dn}),\W_2)$, in which one can view the Langevin dynamics (or rather its associated Fokker-Planck equation) as a gradient flow directly  in continuous-time rather than through the discrete-time JKO scheme.
In this section we describe how the Otto calculus adapts to the submanifold $\P_2^{\otimes n}(\R^d)$ and how to view  the independent projection as a gradient flow on this submanifold.
Throughout, we adopt the notation $\rho_*(dx) \propto e^{f(x)}dx$ and $\bm{b}=\nabla f$ as in Section \ref{se:intro:GF}.

We will not discuss the full derivation of Otto calculus, but we rather mention some relevant highlights; we refer also to the classic book \cite{AGSbook} for a rigorous treatment of gradient flows on Wasserstein space. The tangent space of $\P_2(\R^{dn})$ at $\rho$ is given by
\begin{equation}
\mathrm{Tan}_\rho\P_2(\R^{dn}) = \overline{ \{\nabla\varphi : \varphi \in C^\infty_c(\R^{dn})\} }^{L^2(\rho;\R^{dn})}. \label{def:tangentspace}
\end{equation}
The absolutely continuous curves $(\rho_t)$ in $\P_2(\R^{dn})$ are those satisfying (weakly) the continuity equation $\partial_t \rho_t + \nabla \cdot (\rho_t v_t) = 0$, with $v_t$ being a uniquely determined element of $\mathrm{Tan}_{\rho_t}\P_2(\R^{dn})$ for a.e.\ $t$, which we consider to be the ``velocity" of the curve at time $t$.
The so-called \emph{Wasserstein gradient} of the functional  $H(\cdot\,|\,\rho_*)$ at a probability density $\rho$ is given by $\nabla \log(\rho/\rho_*)$. 
The Fokker-Planck equation  \eqref{def:FokkerPlanck} can be written (for $\bm{b}=\nabla f=\nabla\log\rho_*$)
\begin{equation}
\partial_t\rho_t - \mathrm{div}\big(\rho_t \nabla \log(\rho_t/\rho_*)\big) = 0. \label{def:FokkerPlanckGF}
\end{equation}
This says that the velocity of $\rho_t$ is the negative Wasserstein gradient of $H(\cdot\,|\,\rho_*)$ at $\rho_t$,  expressing the fact that \eqref{def:FokkerPlanckGF} describes the gradient flow of $H(\cdot\,|\,\rho_*)$ in $\P_2(\R^{dn})$.

Before turning to the submanifold $\P^{\otimes n}_2(\R^d)$, let us first recall a basic principle for finite-dimensional Riemannian submanifolds. For an embedded submanifold $M$ of a Euclidean space $\R^k$ and a smooth function $h : \R^k \to \R$, the restriction $h|_M$ defines a smooth function on $M$, and the gradient (in the geometry of $M$) at a point $x \in M$ is simply the orthogonal projection of the Euclidean gradient $\nabla h(x)$ onto the tangent space of $M$ at $x$.

Hence, to understand gradient flows in the submanifold $\P^{\otimes n}_2(\R^d) \subset \P_2(\R^{dn})$, we need to identify its tangent spaces.
We claim that, for $\mu = \mu^1 \otimes \cdots \otimes \mu^n \in \P_2^{\otimes n}(\R^d)$, we should make the identification
\begin{equation}
\mathrm{Tan}_\mu\P_2^{\otimes n}(\R^d) = \bigoplus_{i=1}^n\mathrm{Tan}_{\mu^i}\P_2(\R^d). \label{def:tangentspace-new}
\end{equation}
The simplest way to understand this is to realize that $\P^{\otimes n}_2(\R^d)$ is diffeomorphic to $(\P_2(\R^d))^n$ via the obvious bijection $\mu^1\otimes \cdots \otimes \mu^n \mapsto (\mu^1,\ldots,\mu^n)$, where the space $(\P_2(\R^d))^n$ is given the geometry of the $n$-fold product of the Riemannian manifold $\P_2(\R^d)$.
More concretely, the right-hand side of \eqref{def:tangentspace-new} is the space of functions of the form $(x^1,\ldots,x^n) \mapsto (v_1(x^1),\ldots,v_n(x^n))$, where $v_i \in \mathrm{Tan}_{\mu^i}\P_2(\R^d)$ for each $i=1,\ldots,n$. Another way to see \eqref{def:tangentspace-new} is as the closure of the space of gradients of smooth \emph{additively separable} functions:
For real-valued functions $\varphi_1,\ldots,\varphi_n$ on $\R^d$, write $\varphi_1\oplus \cdots \oplus \varphi_n$ for the function on $\R^{dn}$ given by
\begin{equation*}
(\varphi_1\oplus \cdots \oplus \varphi_n)(x^1,\ldots,x^n) = \sum_{i=1}^n\varphi_i(x^i).
\end{equation*}
Then \eqref{def:tangentspace-new} can be written as
\begin{equation*}
\mathrm{Tan}_\mu\P_2^{\otimes n}(\R^d) = \overline{ \{\nabla(\varphi_1\oplus\cdots\oplus\varphi_n) : \varphi_i \in C^\infty_c(\R^d), \ i=1,\ldots,n\}}^{L^2(\mu;\R^{dn})}.
\end{equation*}

With this identification of the tangent spaces, we can return to the topic of gradient flows.
For $\mu \in \P^{\otimes n}_2(\R^d)$, let $\mathsf{T}_{\mu}$ denote the $L^2(\mu;\R^{dn})$-projection onto  $\mathrm{Tan}_\mu\P_2^{\otimes n}(\R^d)$. The action of this projection on $\mathrm{Tan}_\mu\P_2(\R^{dn})$ is not difficult to compute. We claim for $g \in C^\infty_c(\R^{dn})$ that
\begin{equation}
\mathsf{T}_\mu \nabla g =\big(
\E_\mu[\nabla_1 g(\bm{X})\,|\, X^1] ,\ldots, \E_\mu[\nabla_n g(\bm{X})\,|\, X^n]\big),  \label{def:Tprojection}
\end{equation}
where $\bm{X}\sim \mu$ in the conditional expectations. Indeed, we have for any $\varphi_1,\ldots,\varphi_n \in C^\infty_c(\R^d)$ that
\begin{align}
\int_{\R^{dn}} \nabla g \cdot \nabla (\varphi_1\oplus\cdots\oplus \varphi_n) \,d\mu &= \sum_{i=1}^n \int_{\R^{dn}}\nabla_i g(\bm{x}) \cdot \nabla_i\varphi_i(x^i)\,\mu(d\bm{x}) \nonumber \\
	&= \sum_{i=1}^n\int_{\R^d}\E_\mu[\nabla_i g(\bm{X}) \,|\,X^i=x^i] \cdot \nabla_i\varphi_i(x^i)\,\mu^i(dx^i). \label{def:proj-calculuation}
\end{align}
Because $\mu$ is a product measure and $g \in C^\infty_c(\R^{dn})$, we may exchange the derivative and expectation, $\nabla_i \E_\mu[  g(\bm{X})\,|\, X^i=x^i] = \E_\mu[\nabla_i g(\bm{X})\,|\, X^i=x^i]$. It follows that the right-hand side of \eqref{def:Tprojection} is indeed an element of $\mathrm{Tan}_\mu\P_2(\R^{dn})$, and then the formula \eqref{def:proj-calculuation} implies the claim \eqref{def:Tprojection}. Taking $L^2$-limits in \eqref{def:Tprojection} extends the formula beyond smooth $\nabla g$, and we find for any $v=(v_1,\ldots,v_n) \in \mathrm{Tan}_\mu\P_2(\R^{dn})$ that
\begin{equation*}
\mathsf{T}_\mu v =\big(
\E_\mu[v_1(\bm{X})\,|\, X^1] ,\ldots, \E_\mu[v_n(\bm{X})\,|\, X^n]\big). 
\end{equation*}

Finally, let us return the PDE \eqref{def:newFokkerPlanck} related to our independent projection, again in the case that $\bm{b}=\nabla f=\nabla\log\rho_*$.
For a sufficiently regular product measure $\mu$ we have $\mathsf{T}_\mu\nabla \log\mu = \nabla \log \mu$, the latter already belonging to $\mathrm{Tan}_\mu\P^{\otimes n}_2(\R^d)$ because $\log\mu$ is additively separable.
Thus, applying \eqref{def:Tprojection} with $g=f=\log\rho_*$, the PDE \eqref{def:newFokkerPlanck} can be written as
\begin{equation}
\partial_t\mu_t  -\mathrm{div}\big(\mu_t \mathsf{T}_{\mu_t}\nabla \log(\mu_t/\rho_*)\big) = 0. \label{def:PDE-GF-interpretation}
\end{equation}
This is analogous to the formulation \eqref{def:FokkerPlanckGF} of the Fokker Planck equation, stating that the velocity of $\mu_t$ is the projection of the negative Wasserstein gradient of $H(\cdot\,|\,\rho_*)$ at $\mu_t$ onto the tangent space  $\mathrm{Tan}_{\mu_t}\P_2^{\otimes n}(\R^d)$. This expresses the fact that \eqref{def:PDE-GF-interpretation} is the gradient flow of $H(\cdot\,|\,\rho_*)$ in $\P_2^{\otimes n}(\R^d)$, because the projection of the gradient is precisely the gradient within the submanifold geometry.

This completes our discussion of the gradient flow property of the independent projection, and the rest of this section documents some further observations about geometry of $\P_2(\R^{dn})$. Consider the ``marginal map" $\pi : \P_2(\R^{dn}) \to \P_2^{\otimes n}(\R^d)$, which maps a measure to the product of its marginals. A level set might be written as
\[
\pi^{-1}(\{\mu^1 \otimes \ldots \otimes \mu^n\}) = \Pi(\mu^1,\ldots,\mu^n),
\]
the latter being the standard notation for the set of probability measures on $\R^{dn} \cong (\R^d)^n$ with marginals $\mu^1,\ldots,\mu^n$. We can decompose $\P_2(\R^{dn})$ into the disjoint union of these level sets,
\[
\P_2(\R^{dn}) = \bigcup_{\mu^1\otimes \cdots \otimes \mu^n \in  \P_2^{\otimes n}(\R^d)} \Pi(\mu^1,\ldots,\mu^n).
\]
It was argued in \cite{ConfortiLackerPal} (in the case of $d=2$, but the argument obviously extends) that this marginal map $\pi$ should be viewed as a  Riemannian submersion, as its differential $D\pi(\mu)$ at any point $\mu\in\P_2(\R^{dn})$ is the orthogonal projection
\begin{align*}
D\pi(\mu) : \mathrm{Tan}_\mu\P_2(\R^{dn}) \to \bigoplus_{i=1}^n \mathrm{Tan}_{\mu^i}\P_2(\R^d)=\mathrm{Tan}_{\pi(\mu)}\P_2^{\otimes n}(\R^d),
\end{align*}
with the latter identity being the formula  \eqref{def:tangentspace-new}. We may thus interpret $\mathrm{Tan}_{\pi(\mu)}\P_2^{\otimes n}(\R^d)$ as the \emph{horizontal space} at $\mu$ of the  submersion $\pi$.
The \emph{vertical space} is the orthogonal complement of the horiztonal space in $\mathrm{Tan}_\mu\P_2(\R^{dn})$, which is precisely $\mathrm{Tan}_{\mu}\Pi(\mu^1,\ldots,\mu^n)$, the tangent space of the submanifold of couplings  which played a central role in  \cite{ConfortiLackerPal}.

\begin{remark}
An analogous complementary relationship appeared in the work of Otto \cite[Page 133]{otto2001geometry}: Let $\lambda$ denote Lebesgue measure on the torus $\T^d$, and let $\mathcal{M}$ be the space of diffeomorphisms of $\T^d$, equipped with the $L^2(\lambda)$ inner product. Define $G : \mathcal{M} \to \P(\T^d)$ by $G(\varphi) := \varphi_{\#}\lambda$. Otto (formally) argued that $G$ is a Riemannian submersion and derived the Wasserstein geometry (as it is now known) of $\P(\T^d)=G(\mathcal{M})$ as the geometry induced via $G$ by the flat space $\mathcal{M}$. The complementary geometry here is that of the group of measure-preserving diffeomorphisms  $G^{-1}(\{\lambda\})$, which has been studied extensively since Arnold \cite{arnold1966geometrie} reinterpreted the (incompressible inviscid) Euler equations as the geodesic equation on $G^{-1}(\{\lambda\})$.
\end{remark}

The above discussion reveals a close geometric connection between the space of product measures and spaces of couplings.
In the context of entropic optimal transport, the recent paper \cite{ConfortiLackerPal} introduced and studied 
an analogous SDE in which a space of couplings is preserved, rather than the space of product measures as in the present paper. Their SDE is just like \eqref{def:mainSDE-probabilistic}, except that $\E[b^i(\bm{X}_t)\,|\,X^i_t]$ is replaced by $b^i(\bm{X}_t) - \E[b^i(\bm{X}_t)\,|\,X^i_t]$, which reflects the fact discussed above that the tangent spaces of product measures and couplings are complementary.
Despite the close parallels, the SDE of this paper behaves quite different from that of \cite{ConfortiLackerPal}, in several ways. Most obviously, unlike in our setting, the invariant measure (energy minimizer) is always unique in \cite{ConfortiLackerPal}. Yet, remarkably, our SDE turns out to be significantly simpler to analyze. On a technical level, this is because the equation of \cite{ConfortiLackerPal} depends on \emph{conditional} measures.
On a conceptual level, a key difference is in the structure of the constraint sets:
\begin{itemize}
\item $\Pi(\mu^1,\ldots,\mu^n)$ is convex but not geodesically convex.
\item $\P^{\otimes n}_2(\R^d)$ is geodesically convex but not convex.
\end{itemize}
To be clear, the word ``convex"  (without the modifier ``geodesically") is meant in the usual sense, for subsets of the vector space of signed measures on $\R^{dn}$ of finite variation.
Convexity of $\Pi(\mu^1,\ldots,\mu^n)$ and non-convexity of $\P^{\otimes n}_2(\R^d)$ are easy to see.
Geodesic non-convexity of $\Pi(\mu^1,\ldots,\mu^n)$ is shown in \cite[Proposition 2.1]{ConfortiLackerPal}.
Below we give two proofs of geodesic convexity of $\P^{\otimes n}_2(\R^d)$:

\begin{proposition}
The set of absolutely continuous (with respect to Lebesgue measure) elements of $\P_2^{\otimes n}(\R^d) \subset \P_2(\R^{dn})$ is geodesically convex.
\end{proposition}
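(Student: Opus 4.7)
The plan is to invoke McCann's displacement interpolation: the (unique) Wasserstein geodesic between two absolutely continuous measures $\mu,\nu \in \P_2(\R^{dn})$ is $\mu_t := T_t{}_\#\mu$ with $T_t = (1-t)\mathrm{id} + t\nabla\Phi$, where $\nabla\Phi$ is the Brenier optimal transport map from $\mu$ to $\nu$. The task reduces to showing that when $\mu = \mu^1\otimes\cdots\otimes\mu^n$ and $\nu=\nu^1\otimes\cdots\otimes\nu^n$ are absolutely continuous product measures, the Brenier map has product form, which forces $\mu_t$ to be an absolutely continuous product measure for every $t\in[0,1]$.

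Concretely, first I would apply Brenier's theorem in $\R^d$ to each pair $(\mu^i,\nu^i)$ to obtain convex functions $\varphi_i:\R^d\to\R$ such that $(\nabla\varphi_i)_\#\mu^i = \nu^i$. Set
\begin{equation*}
\Phi(\bm{x}) := \sum_{i=1}^n \varphi_i(x^i), \qquad \bm{x}=(x^1,\ldots,x^n) \in \R^{dn}.
\end{equation*}
Then $\Phi$ is convex on $\R^{dn}$ as a sum of convex functions of disjoint groups of variables, and $\nabla\Phi(\bm{x}) = (\nabla\varphi_1(x^1),\ldots,\nabla\varphi_n(x^n))$ pushes the product measure $\mu$ forward to the product measure $\nu$ coordinate by coordinate. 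By Brenier's theorem applied on $\R^{dn}$, $\nabla\Phi$ is therefore \emph{the} optimal transport map from $\mu$ to $\nu$.

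Next I would describe the geodesic. Since $T_t$ inherits the coordinate-wise structure from $\nabla\Phi$, we have
\begin{equation*}
T_t(\bm{x}) = \bigl(T_t^1(x^1),\ldots,T_t^n(x^n)\bigr), \qquad T_t^i(x^i) := (1-t)x^i + t\nabla\varphi_i(x^i),
\end{equation*}
so the pushforward factorizes as $\mu_t = \mu_t^1\otimes\cdots\otimes\mu_t^n$ with $\mu_t^i := (T_t^i)_\# \mu^i$. Hence $\mu_t \in \P_2^{\otimes n}(\R^d)$. Absolute continuity of each $\mu_t^i$ follows because $T_t^i = \nabla\bigl(\tfrac{1-t}{2}|\cdot|^2 + t\varphi_i\bigr)$ is the gradient of a strictly convex function on $\R^d$ for $t\in[0,1)$, hence injective $\mu^i$-a.e., so the pushforward of the absolutely continuous $\mu^i$ is again absolutely continuous (e.g., by McCann's regularity of displacement interpolation between a.c. measures); the endpoint $t=1$ is a.c. by assumption.

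The only delicate point is the justification that $\nabla\Phi$ is truly the optimal map in $\R^{dn}$, not merely a transport map — this is where Brenier's uniqueness theorem is essential, together with the observation that convexity of $\Phi$ is inherited from the convexity of each $\varphi_i$ precisely because the arguments $x^1,\ldots,x^n$ are disjoint coordinates. Everything else is bookkeeping of pushforwards under product maps.
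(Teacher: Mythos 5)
Your proof is correct and follows essentially the same route as the paper's: build the Brenier map in $\R^{dn}$ as $\nabla\Phi$ with $\Phi=\sum_i\varphi_i$, invoke uniqueness of the Brenier map, and observe that the McCann interpolation $((1-t)\mathrm{Id}+t\nabla\Phi)_{\#}\mu$ factorizes coordinatewise. If anything, you are more careful than the paper on one point: the paper only remarks that $\mu_t$ is ``easily seen to be a product measure,'' whereas the proposition claims geodesic convexity of the \emph{absolutely continuous} product measures, so one must also check that each $\mu_t$ is absolutely continuous --- which you do correctly via McCann's interior regularity for displacement interpolants.
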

\begin{proof}
Let $\mu,\nu \in \P_2^{\otimes n}(\R^d)$ be  absolutely continuous. Let $\mu^i$ and $\nu^i$ denote the marginals. Let $\varphi_i : \R^d \to \R$ be the Brenier map, i.e., the unique (up to additive constants) convex function such that $\nabla\varphi_i$ pushes $\mu^i$ forward to $\nu^i$. Then $\varphi(x^1,\ldots,x^n) = \sum_i\varphi_i(x^i)$ is convex, and $\nabla\varphi$ pushes $\mu$ forward to $\nu$. By uniqueness of the Brenier map, we deduce that the geodesic from $\mu$ to $\nu$ is given by $\mu_t =  ((1-t)\mathrm{Id}+t\nabla\varphi)_{\#}\mu$, which is easily seen to be a product measure for each $t$.
\end{proof}

Without the absolute continuity restriction, somewhat more care is required:

\begin{proposition}
Let $\mu_0,\mu_1 \in \P_2^{\otimes n}(\R^d)$. Then there exists a geodesic in $(\P_2(\R^{dn}),\W_2)$ which connects $\mu_0$ and $\mu_1$ and lies entirely in $\P_2^{\otimes n}(\R^d)$.
\end{proposition}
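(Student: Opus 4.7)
The strategy is to build the geodesic from componentwise optimal couplings, bypassing the Brenier map (which requires absolute continuity). The plan has four steps.

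First, for each $i=1,\ldots,n$, write $\mu_0=\mu^1_0\otimes\cdots\otimes\mu^n_0$ and $\mu_1=\mu^1_1\otimes\cdots\otimes\mu^n_1$, and choose an optimal coupling $\pi^i\in\Pi(\mu^i_0,\mu^i_1)$ realizing $\W_2^2(\mu^i_0,\mu^i_1)$ on $\R^d\times\R^d$. Identifying $\R^{dn}\times\R^{dn}\cong (\R^d\times\R^d)^n$ by reordering coordinates so that the two copies of $x^i$ (the source and target $i$th components) sit side by side, form the product $\pi:=\pi^1\otimes\cdots\otimes\pi^n$. Its marginals on the two $\R^{dn}$-factors are precisely $\mu_0$ and $\mu_1$, so $\pi\in\Pi(\mu_0,\mu_1)$.

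Second, show that $\pi$ is optimal for $\W_2^2(\mu_0,\mu_1)$. Since the squared cost on $\R^{dn}\times\R^{dn}$ decomposes as $|\bm{x}-\bm{y}|^2=\sum_{i=1}^n|x^i-y^i|^2$, we have $\int|\bm{x}-\bm{y}|^2\,d\pi=\sum_i\W_2^2(\mu^i_0,\mu^i_1)$. On the other hand, any $\pi'\in\Pi(\mu_0,\mu_1)$ has $i$th bimarginal in $\Pi(\mu^i_0,\mu^i_1)$, so $\int|\bm{x}-\bm{y}|^2\,d\pi'\ge\sum_i\W_2^2(\mu^i_0,\mu^i_1)$. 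Thus $\pi$ is optimal and in particular $\W_2^2(\mu_0,\mu_1)=\sum_{i=1}^n\W_2^2(\mu^i_0,\mu^i_1)$.

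Third, construct the geodesic by displacement interpolation: let $T_t:\R^{dn}\times\R^{dn}\to\R^{dn}$ be $T_t(\bm{x},\bm{y})=(1-t)\bm{x}+t\bm{y}$ and set $\mu_t:=(T_t)_{\#}\pi$ for $t\in[0,1]$. Because $\pi$ is an optimal coupling between $\mu_0$ and $\mu_1$, this is a constant-speed geodesic in $(\P_2(\R^{dn}),\W_2)$ (a standard fact, e.g.\ \cite{AGSbook}). Since $T_t$ acts coordinatewise and $\pi$ is a product measure, $\mu_t=\nu^1_t\otimes\cdots\otimes\nu^n_t$ where $\nu^i_t=((1-t)x^i+ty^i)_{\#}\pi^i\in\P_2(\R^d)$. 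Hence $\mu_t\in\P_2^{\otimes n}(\R^d)$ for every $t$, completing the proof.

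The only delicate point is the coordinate reordering needed to view $\pi^1\otimes\cdots\otimes\pi^n$ as a probability measure on $\R^{dn}\times\R^{dn}$; once this bookkeeping is set up, the additive splitting of the quadratic cost makes both the optimality of $\pi$ and the product structure of $(T_t)_{\#}\pi$ immediate, so there is no substantial analytic obstacle.
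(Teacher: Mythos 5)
Your proposal is correct and follows essentially the same approach as the paper: construct an optimal coupling of $\mu_0$ and $\mu_1$ that factorizes across coordinates (by combining per-coordinate optimal couplings), and then observe that displacement interpolation along this coupling preserves the product structure. The paper states the existence of such a product-form optimal coupling slightly more implicitly, but the argument is the same.
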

\begin{proof}
By definition,
\begin{equation*}
\W_2^2(\mu_0,\mu_1) = \inf \sum_{i=1}^n\E|X^i_0-X^i_1|^2,
\end{equation*}
where the infimum is over all  $\bm{X}_0 \sim \mu$ and $\bm{X}_1 \sim \nu$. The infimum does not change if one restricts to the case where $(X^1_0,X^1_1)$ and $(X^2_0,X^2_1)$ and $\cdots$ and $(X^n_0,X^n_1)$ are independent. Hence, there exists an optimal coupling $(\bm{X}_0,\bm{X}_1)$ satisfying this independence property.
Thanks to this independence, the law $\mu_t$ of $\bm{X}_t := t\bm{X}_1 + (1-t)\bm{X}_0$ is a product measure. Because $(\bm{X}_0,\bm{X}_1)$ are optimally coupled, $\mu_t$ defines a geodesic in $(\P_2(\R^{dn}),\W_2)$ by \cite[Theorem 7.2.2]{AGSbook}.
\end{proof}

\begin{remark} \label{se:AGSbook}
Using the geodesic convexity of $\P_2^{\otimes n}(\R^d)$, along with the geodesic (strong) convexity of $H(\cdot\,|\,\rho_*)$ in the case of (strongly) concave $f$, we expect that the most powerful results should apply from the general theory of gradient flows on metric spaces \cite{AGSbook}. Indeed, this should immediately lead to analogues of Theorem \ref{th:JKO}, \ref{th:convex-limit}(iii), and Theorem \ref{th:LSI} but with an abstract notion of gradient flow taking the places of the (PDE associated to the) SDE \eqref{def:mainSDE}. But it is not clear that this approach would save any effort, as it is a non-trivial task to rigorously connect the abstract gradient flow to our SDE. (Though the formal connection via Otto calculus is clear, as explained in this section.)
\end{remark}

\section{Preliminaries: Existence, uniqueness, and estimates} \label{se:wellposedness}

We first collect some fairly straightforward results on the well-posedness and integrability of the independent projection, as defined by the SDE \eqref{def:mainSDE}. Throughout, we assume that $\bm{b}$ is Lipschitz.

\begin{proposition} \label{pr:wellposed}
The SDE \eqref{def:mainSDE} admits a unique strong solution, for any independent and integrable initial positions  $(X^1_0,\ldots,X^n_0)=\bm{X}_0$. If $p \ge 1$ and $\E|\bm{X}_0|^p < \infty$, then $\sup_{t \in [0,T]}\E|\bm{X}_t|^p < \infty$ for all $T > 0$.
\end{proposition}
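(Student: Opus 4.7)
The plan is to carry out the standard Sznitman-style fixed-point argument for McKean–Vlasov SDEs, exploiting the product structure preserved by independent initial conditions and independent driving noises.

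First I would observe that any strong solution automatically has independent components: since $(B^1,\ldots,B^n)$ and $(X^1_0,\ldots,X^n_0)$ are independent, and the drift of $X^i$ depends only on $X^i_t$ and on the \emph{law} $\mu_t^{-i}$ (not on the paths of the other coordinates), pathwise uniqueness for each component given the measure flow forces independence. Hence one may restrict attention to measure flows $(\mu_t)_{t\in[0,T]}$ with $\mu_t = \mu^1_t \otimes \cdots \otimes \mu^n_t$, and it suffices to construct a fixed point of the map $\Phi$ that sends a continuous measure flow $(\mu_t)$ to the law flow of the \emph{linear} SDE $dX^i_t = \widehat b^i[\mu_t](X^i_t)\,dt + \sqrt{2}\,dB^i_t$ started from $\bm X_0$. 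Lipschitzness of $\bm b$ gives two key bounds: (a) $x\mapsto \widehat b^i[\mu](x)$ is $L$-Lipschitz uniformly in $\mu$, so the linear SDE defining $\Phi$ is classically well-posed; (b) by Kantorovich duality, $|\widehat b^i[\mu](x) - \widehat b^i[\nu](x)| \le L\,\W_1(\mu^{-i},\nu^{-i})$.

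Next I would run a contraction on $C([0,T],\P_1(\R^{dn}))$ endowed with an exponentially weighted $\W_1$ metric $d_\lambda(\mu,\nu) = \sup_{t\le T} e^{-\lambda t}\W_1(\mu_t,\nu_t)$. Given two measure flows and the two SDE solutions coupled through the same Brownian motion and initial condition, the estimate
\begin{equation*}
\E|X^i_t - Y^i_t| \le \int_0^t L\bigl(\E|X^i_s - Y^i_s| + \W_1(\mu_s^{-i},\nu_s^{-i})\bigr)\,ds
\end{equation*}
combined with Grönwall's lemma yields $\W_1(\Phi(\mu)_t,\Phi(\nu)_t) \le C\int_0^t \W_1(\mu_s,\nu_s)\,ds$; choosing $\lambda$ large makes $\Phi$ a strict contraction, giving existence and uniqueness of a strong solution on $[0,T]$, hence on $[0,\infty)$ by concatenation.

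For the moment bound, I would apply Itô's formula to $\sum_{i}(1+|X^i_t|^2)^{p/2}$ (stopping at a localizing sequence, then removing the stop via Fatou / BDG in standard fashion). The linear growth $|\widehat b^i[\mu_t](x)| \le |b^i(0)| + L|x| + L\E|\bm X_t|$ yields
\begin{equation*}
\E|\bm X_{t}|^p \le C_p\bigl(1 + \E|\bm X_0|^p\bigr) + C_p\int_0^t \E|\bm X_s|^p\,ds,
\end{equation*}
and Grönwall closes the estimate uniformly on $[0,T]$. The $p\in[1,2)$ case follows either by Jensen from the $p=2$ bound or directly from a truncation argument. The only mildly delicate point — and the closest thing to an obstacle — is checking that the fixed-point argument respects the product structure; but this is automatic because $\Phi$ preserves the subspace of product-valued measure flows, as the linear SDE produced by $\Phi$ has decoupled dynamics in $i$.
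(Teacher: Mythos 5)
Your proof is correct and matches the paper's approach, which simply invokes Sznitman's Theorem 1.1 (the fixed-point contraction in a weighted Wasserstein metric) with the remark that the argument adapts from bounded to unbounded Lipschitz drifts given integrable initial data, plus Gr\"onwall for the moments. You have simply spelled out what the paper leaves to the reader; the one small slip is that Jensen's inequality does not derive the $p\in[1,2)$ moment bound from the $p=2$ case (you would need $\E|\bm X_0|^2<\infty$), but your fallback truncation argument handles it.
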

\begin{proof}
Existence and uniqueness follows as in \cite[Theorem 1.1]{sznitman1991topics}, which was stated for bounded Lipschitz drift but adapts easily to the unbounded (but still Lipschitz) case as long as the initial condition is integrable.
The integrability claim follows from standard arguments using the Lipschitz assumption and Gr\"onwall's inequality. 
\end{proof}

The next proposition justifies the important claims made in Example \ref{ex:McKeanVlasov} and Remark \ref{re:rowsums}, that in certain cases the independent projection reduces to iid copies of the McKean-Vlasov equation.

\begin{proposition} \label{pr:McKeanVlasov}
Suppose $X^1_0,\ldots,X^n_0$ are iid and integrable with law $\mu_0 \in \P(\R^d)$.
Suppose $\bm{b}=(b^1,\ldots,b^n)$ is given by
\[
b^i(\bm{x}) = K_1(x^i) + \sum_{j \neq i} A_{ij} K_2(x^i,x^j).
\]
for Lipschitz functions $K_1$  and $K_2$, and for any matrix $A$ with row sums $\sum_j A_{ij}=1$ for all $i$.
Then the law of independent projection defined in \eqref{def:mainSDE} is given by $\overline{\mu}^{\otimes n}$, where $\overline{\mu}$ is the law of  the unique solution of the McKean-Vlasov equation \eqref{def:McKeanVlasov}.
\end{proposition}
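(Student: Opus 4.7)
The plan is to verify directly that $n$ independent copies of the McKean-Vlasov solution $\overline{Y}$ solve the independent projection SDE \eqref{def:mainSDE}, and then invoke uniqueness from Proposition \ref{pr:wellposed} to conclude.

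First, I would recall that under Lipschitz assumptions on $K_1$ and $K_2$, the McKean-Vlasov equation \eqref{def:McKeanVlasov} admits a unique strong solution $\overline{Y}$ with time-$t$ law $\overline{\mu}_t$; this is classical (e.g.\ \cite{sznitman1991topics}). I would then construct $n$ independent copies $\overline{Y}^1,\ldots,\overline{Y}^n$ of $\overline{Y}$, driven by the independent Brownian motions $B^1,\ldots,B^n$ appearing in \eqref{def:mainSDE} and initialized at $(X^1_0,\ldots,X^n_0)$, which are iid $\sim\mu_0$ by hypothesis. Each $\overline{Y}^i$ then satisfies
\begin{equation*}
d\overline{Y}^i_t = \Big(K_1(\overline{Y}^i_t) + \int_{\R^d} K_2(\overline{Y}^i_t,y)\,\overline{\mu}_t(dy)\Big)\,dt + \sqrt{2}\,dB^i_t,
\end{equation*}
and the joint law of $\bm{\overline{Y}}_t=(\overline{Y}^1_t,\ldots,\overline{Y}^n_t)$ is the product $\overline{\mu}_t^{\otimes n}$.

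Next, I would compute the conditional drift in the probabilistic form \eqref{def:mainSDE-probabilistic} of the independent projection applied to $\bm{\overline{Y}}$. Because $\overline{Y}^1_t,\ldots,\overline{Y}^n_t$ are independent and each distributed as $\overline{\mu}_t$, for any $i$ and any $x\in\R^d$,
\begin{align*}
\E\bigl[b^i(\bm{\overline{Y}}_t)\,\bigm|\,\overline{Y}^i_t=x\bigr] &= K_1(x) + \sum_{j\neq i} A_{ij}\,\E\bigl[K_2(x,\overline{Y}^j_t)\bigr] \\
&= K_1(x) + \Bigl(\sum_{j\neq i} A_{ij}\Bigr)\int_{\R^d}K_2(x,y)\,\overline{\mu}_t(dy).
\end{align*}
The row-sum hypothesis $\sum_{j\neq i}A_{ij}=1$ (interpreting the assumption with zero diagonal, as in Corollary \ref{co:propchaos}) reduces this bracket to $1$, so the conditional drift equals exactly the McKean-Vlasov drift that governs $\overline{Y}^i$. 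Hence $\bm{\overline{Y}}$ satisfies \eqref{def:mainSDE-probabilistic} driven by $(B^1,\ldots,B^n)$ and with initial condition $\bm{X}_0$.

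Finally, I would appeal to the strong uniqueness established in Proposition \ref{pr:wellposed} to conclude that $\bm{X}=\bm{\overline{Y}}$ almost surely, so in particular $\mathrm{Law}(\bm{X}_t)=\overline{\mu}_t^{\otimes n}$ for every $t$. There is no real obstacle here: the only substantive point is the algebraic simplification enabled by the row-sum condition, which precisely cancels the asymmetry of the weights $A_{ij}$ when averaging against a product distribution. The same reasoning covers Example \ref{ex:McKeanVlasov} as the special case $A_{ij}=1/(n-1)$.
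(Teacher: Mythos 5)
Your proof is correct and follows essentially the same route as the paper's: construct $n$ independent copies of the McKean-Vlasov solution, compute the conditional (or integrated-out) drift using independence, observe the row-sum condition collapses the weighted average to the McKean-Vlasov drift, and conclude via the strong uniqueness in Proposition \ref{pr:wellposed}. The only cosmetic difference is that the paper works with the integral form $\int b^i(Z^i_t,\bm{x}^{-i})\,\mu_t^{-i}(d\bm{x}^{-i})$ while you use the equivalent conditional-expectation notation of \eqref{def:mainSDE-probabilistic}, and you correctly note the zero-diagonal reading of the row-sum hypothesis.
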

\begin{proof}
On a common probability space, construct iid copies $Z^1,\ldots,Z^n \sim \overline{\mu}$ of the McKean-Vlasov equation:
\begin{align*}
dZ^i_t = \bigg(K_1(Z^i_t) + \int_{\R^d} K_2(Z^i_t,x)\,\overline{\mu}_t(dx)\bigg)dt + \sqrt{2}\,dB^i_t,
\end{align*}
where $B^1,\ldots,B^n$ are independent Brownian motions, and the initializations $Z^i_0$ are iid $\sim \mu_0$. 
Let $\mu_t=\overline{\mu}_t^{\otimes n}$ denote the law of  $(Z^1_t,\ldots,Z^n_t)$.
The point is just to notice that the row sum condition entails
\begin{align*}
\int_{(\R^d)^{n-1}} b^i(Z^i_t,\bm{x}^{-i})\,\mu_t^{-i}(d\bm{x}^{-i}) &= K_1(Z^i_t) +\sum_{j\neq i} A_{ij} \int_{\R^d} K_2(Z^i_t,x)\,\overline{\mu}_t(dx) \\
	&= K_1(Z^i_t) + \int_{\R^d} K_2(Z^i_t,x)\,\overline{\mu}_t(dx).
\end{align*}
Thus, $Z^1,\ldots,Z^n$ solves the independent projection SDE \eqref{def:mainSDE}, which is unique by Proposition \ref{pr:wellposed}.
\end{proof}

\begin{lemma} \label{le:momentbound}
Suppose there exist $c_1 \ge 0$ and $c_2 > 0$ such that
\begin{equation}
\bm{x}\cdot \bm{b}(\bm{x}) \le c_1 - c_2|\bm{x}|^2, \quad \forall \bm{x} \in \R^{dn}. \label{asmp:momentbound-dissipative}
\end{equation}
Suppose $\E|\bm{X}_0|^2 < \infty$. Then the solution of \eqref{def:mainSDE} satisfies $\sup_{t \ge 0}\E|\bm{X}_t|^2 < \infty$. Moreover, the random variables $(|\bm{X}_t|^2)_{t \ge 0}$ are uniformly integrable.
\end{lemma}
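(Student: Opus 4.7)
The plan is a two-stage argument: establish the uniform $L^2$ bound by a direct Itô calculation that exploits the independence built into the independent projection, then upgrade to uniform integrability by showing the effective one-coordinate drift $\widehat{b}^i[\mu_t]$ inherits a pointwise dissipativity from the global hypothesis \eqref{asmp:momentbound-dissipative}.

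For the second moment, apply Itô's formula to $|\bm{X}_t|^2=\sum_i|X^i_t|^2$. The crucial identity, which is the workhorse of the paper, is that because $X^1_t,\ldots,X^n_t$ are independent one has $\widehat{b}^i[\mu_t](X^i_t)=\E[b^i(\bm{X}_t)\,|\,X^i_t]$, and hence by the tower property
\[
\E\bigl[X^i_t\cdot\widehat{b}^i[\mu_t](X^i_t)\bigr]=\E[X^i_t\cdot b^i(\bm{X}_t)].
\]
Summing over $i$ and invoking \eqref{asmp:momentbound-dissipative},
\[
\tfrac{d}{dt}\E|\bm{X}_t|^2=2\E[\bm{X}_t\cdot\bm{b}(\bm{X}_t)]+2dn\le 2(c_1+dn)-2c_2\E|\bm{X}_t|^2,
\]
and Gr\"onwall yields $\sup_{t\ge 0}\E|\bm{X}_t|^2\le\max\bigl(\E|\bm{X}_0|^2,(c_1+dn)/c_2\bigr)=:S_\infty<\infty$.

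For uniform integrability the aim is to upgrade to a higher moment bound. The key step is to show that $\widehat{b}^i[\mu_t]$ is pointwise dissipative in $x$, uniformly in $t$ and $i$: for some finite $\tilde c_1$,
\[
x\cdot\widehat{b}^i[\mu_t](x)\le\tilde c_1-\tfrac{c_2}{2}|x|^2,\qquad \forall\,x\in\R^d,\ t\ge 0,\ i=1,\ldots,n.
\]
This is obtained by writing \eqref{asmp:momentbound-dissipative} at $\bm{z}=(x,\bm{y}^{-i})$ with $x$ placed in coordinate $i$, solving for $x\cdot b^i(\bm{z})\le c_1-c_2|\bm{z}|^2-\bm{y}^{-i}\cdot\bm{b}^{-i}(\bm{z})$, bounding the remaining inner product via the Lipschitz estimate $|\bm{b}(\bm{z})|\le|\bm{b}(0)|+L|\bm{z}|$, and using Young's inequality to absorb the resulting $L|x||\bm{y}^{-i}|$ term into $\tfrac{c_2}{2}|x|^2$; integrating the pointwise bound against $\mu_t^{-i}$ and controlling $\E|\bm{X}_t^{-i}|,\E|\bm{X}_t^{-i}|^2$ by $S_\infty$ then produces the displayed inequality. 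The scalar SDE for $X^i$ is now driven by a Lipschitz and uniformly dissipative (time-dependent) drift, so a standard Itô calculation on $|X^i_t|^p$ for any $p>2$, combined with Young to absorb the $(p-2)$-th moment term, yields
$\frac{d}{dt}\E|X^i_t|^p\le-\alpha\E|X^i_t|^p+\beta$ with $\alpha>0$, and hence $\sup_t\E|X^i_t|^p<\infty$ as soon as $\E|X^i_0|^p<\infty$. To handle initial data with only a finite second moment, I would split $[0,\infty)=[0,\tau]\cup[\tau,\infty)$: on $[0,\tau]$, $L^2$-continuity of $t\mapsto\bm{X}_t$ (from standard SDE estimates using the Lipschitz drift and the just-established $L^2$ bound on $\bm{X}_t$) makes $t\mapsto|\bm{X}_t|^2$ continuous into $L^1$ on a compact interval, whose range is therefore totally bounded in $L^1$ and so UI; on $[\tau,\infty)$ one truncates $\bm{X}_0$ to have finite $p$-th moment, applies the $p$-th moment bound uniformly in the truncation parameter, and passes to the limit by lower semicontinuity and $L^2$ stability of the SDE.

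The main obstacle is the step producing individual dissipativity of $\widehat{b}^i[\mu_t]$. A naive attempt to close the $p$-th moment estimate by applying Itô directly to $|\bm{X}_t|^p$ fails because the prefactor $|\bm{X}_t|^{p-2}$ in $|\bm{X}_t|^{p-2}\bm{X}_t\cdot\widehat{\bm{b}}[\mu_t](\bm{X}_t)$ is not additively separable in $(X^i_t)_i$, so the tower identity used in the $L^2$ bound no longer converts $\widehat{\bm{b}}$ back into $\bm{b}$, and the global dissipativity of $\bm{b}$ does not transfer to a dissipative ODE for $\E|\bm{X}_t|^p$. Passing through the one-coordinate-at-a-time bound above, obtained by carefully combining the Lipschitz structure with the second moment control, is what circumvents this obstruction.
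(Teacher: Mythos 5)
Your proof of the uniform second-moment bound is correct and follows essentially the same route as the paper: Itô's formula, the tower property to convert $\E[X^i_t\cdot\E[b^i(\bm{X}_t)\,|\,X^i_t]]$ back into $\E[X^i_t\cdot b^i(\bm{X}_t)]$, summation over $i$, and the dissipativity hypothesis. The observation that $\widehat{b}^i[\mu_t]$ inherits a pointwise uniform-in-$t$ dissipativity from the global condition (via Young's inequality and the just-established second-moment bound) is also correct and is a genuinely nice structural remark not spelled out in the paper.

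However, the argument for uniform integrability has a gap at the truncation step. Your plan is to truncate $\bm{X}_0$ so that it lies in $L^p$ for some $p>2$, apply the $p$-th moment ODE bound, and then pass to the limit. The trouble is that the $p$-th moment bound you derive is of the form $\sup_{t\ge 0}\E|X^{i,(R)}_t|^p\le\max(\E|X^{i,(R)}_0|^p,\beta/\alpha)$, and the quantity $\E|X^{i,(R)}_0|^p$ diverges as the truncation level $R\to\infty$ whenever $\E|X^i_0|^p=\infty$. There is no ``coming down from infinity'' for $p$-th moments under a drift that is only linearly dissipative; one can check this already for an OU process started from $L^2\setminus L^p$ data, where $\E|X_\tau|^p=\infty$ for every $\tau>0$. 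Furthermore, $L^2$-stability of the SDE gives only $\E|\bm{X}^{(R)}_t-\bm{X}_t|^2\le e^{Ct}\E|\bm{X}^{(R)}_0-\bm{X}_0|^2$, which is not uniform in $t$, and uniform integrability of a family does not pass through pointwise-in-$t$ $L^2$ approximation. So the bound you would obtain is neither uniform in $R$ nor transferable in the limit, and the approach does not close. The paper instead avoids higher moments entirely: it returns to the time-integrated Itô identity for $|X^i_t|^2$, multiplies by $1_{\{|X^i_t|^2>r\}}$, and uses the independence structure in the key form $\PP(|X^i_t|^2>r\,|\,X^i_s)=\PP(|X^i_t|^2>r\,|\,\bm{X}_s)\le\PP(|\bm{X}_t|^2>r\,|\,\bm{X}_s)$ to re-insert the conditional probability as a factor inside the dissipativity bound, so that the global hypothesis still applies and every term is $O(1/r)$ or vanishes as the truncation of $\bm{X}_0$ is refined. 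You would need to replace the $L^p$ bootstrap with an argument of this type that works purely at the $L^2$ level.
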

\begin{proof}
By It\^o's formula, using the representation \eqref{def:mainSDE-probabilistic}, we have
\begin{align*}
d\big(e^{2c_2t}|X^i_t|^2) &= 2\big(de^{2c_2t} +  e^{2c_2t}X^i_t \cdot \E[b^i(\bm{X}_t)\,|\,X^i_t] +  c_2 e^{2c_2t}|X^i_t|^2\big)dt + 2\sqrt{2} e^{2c_2t}X^i_t \cdot dB^i_t.
\end{align*}
Integrate to get
\begin{align}
|X^i_t|^2 &= e^{-2c_2t}|X^i_0|^2   + 2e^{-2c_2t}\int_0^t  e^{2c_2s}\big(d + X^i_s \cdot \E[b^i(\bm{X}_s)\,|\,X^i_s] +  c_2  |X^i_s|^2 \big)ds \nonumber \\
	&\qquad  + 2\sqrt{2} e^{-2c_2t}\int_0^t  e^{2c_2s}  X^i_s \cdot dB^i_s. \label{pf:momentbound1}
\end{align}
The stochastic integral is a true martingale by square-integrability of $\bm{X}$, from Proposition \ref{pr:wellposed}. 
Moveover, using \eqref{asmp:momentbound-dissipative}, we find that
\begin{align*}
\sum_{i=1}^n\E\big[X^i_s \cdot \E[b^i(\bm{X}_s)\,|\,X^i_s] +  c_2 e^{2c_2s}|X^i_s|^2\big] &= \sum_{i=1}^n \E\big[X^i_s \cdot b^i(\bm{X}_s) +  c_2  |X^i_s|^2\big] \\
	&= \E\big[ \bm{X}_s \cdot \bm{b}(\bm{X}_s) +  c_2 |\bm{X}_s|^2\big] \le c_1.
\end{align*}
 Hence, we may take expectations in \eqref{pf:momentbound1} to get
\begin{align*}
\E|\bm{X}_t|^2 &= \sum_{i=1}^n\E|X^i_t|^2 \le e^{-2c_2t}\E|\bm{X}_0|^2 + \int_0^t 2(dn+c_1)e^{-2c_2(t-s)}ds \le e^{-2c_2t}\E|\bm{X}_0|^2 + \frac{nd+c_1}{c_2}.
\end{align*}
This proves the first claim. To prove the second, let $\epsilon > 0$ and let $r > 0$, and multiply both sides of \eqref{pf:momentbound1} by $1_{\{|X^i_t|^2 > r\}}$:
\begin{align}
|X^i_t|^21_{\{|X^i_t|^2 > r\}}  &= e^{-2c_2t} |X^i_0|^21_{\{|X^i_t|^2 > r\}} \nonumber \\
	&\qquad + 2e^{-2c_2t}\int_0^t  e^{2c_2s} \Big(d + X^i_s \cdot \E[b^i(\bm{X}_s)\,|\,X^i_s] +  c_2  |X^i_s|^2\Big) 1_{\{|X^i_t|^2 > r\}}\, ds \nonumber \\
	&\qquad  + 2\sqrt{2}  1_{\{|X^i_t|^2 > r\}}e^{-2c_2t}\int_0^t  e^{2c_2s}  X^i_s \cdot dB^i_s . \label{pf:momentbound2}
\end{align}
Let $M := \sup_{t \ge 0}\E|\bm{X}_t|^2$, which we know to be finite from the first part of the proof.
Find $r' > 0$ such that $\PP(|X^i_0|^2 > r') \le \epsilon$ for all $i$.
Then the first term is handled via
\begin{align}
\E\Big[ |X^i_0|^21_{\{|X^i_t|^2 > r\}} \Big] &\le r'\PP(|X^i_t|^2 > r) + \E\big[|X^i_0|^21_{\{|X^i_0|^2 > r'\}}\big] \nonumber \\
	&\le \frac{r'}{r}M + \E\big[|X^i_0|^21_{\{|X^i_0|^2 > r'\}}\big]. \label{pf:MB-est1}
\end{align}
The second term in \eqref{pf:momentbound2} requires some care. First, note by independence that
\begin{equation*}
\PP(|X^i_t|^2 > r\,|\,X^i_s) = \PP(|X^i_t|^2 > r\,|\,\bm{X}_s) \le \PP(|\bm{X}_t|^2 > r\,|\,\bm{X}_s).
\end{equation*}
Hence, summing over $i$ and taking expectations, we have
\begin{align*}
\sum_{i=1}^n &\E\Big[\Big(d + X^i_s \cdot \E[b^i(\bm{X}_s)\,|\,X^i_s] +  c_2  |X^i_s|^2\Big) 1_{\{|X^i_t|^2 > r\}} \Big] \\
	&= \sum_{i=1}^n\E\Big[\Big(d + X^i_s \cdot  \E[b^i(\bm{X}_s)\,|\,X^i_s]  +  c_2  |X^i_s|^2\Big) \PP(|X^i_t|^2 > r\,|\,X^i_s) \Big] \\
	&= \sum_{i=1}^n\E\Big[\Big(d + X^i_s \cdot  b^i(\bm{X}_s)  +  c_2  |X^i_s|^2\Big) \PP(|X^i_t|^2 > r\,|\,X^i_s) \Big] \\
	&\le \sum_{i=1}^n\E\Big[\Big(d + X^i_s \cdot  b^i(\bm{X}_s)  +  c_2  |X^i_s|^2\Big) \PP(|\bm{X}_t|^2 > r\,|\,\bm{X}_s) \Big] \\
	&= \E\Big[\Big(nd + \bm{X}_s \cdot  b^i(\bm{X}_s)  +  c_2  |\bm{X}_s|^2\Big) \PP(|\bm{X}_t|^2 > r\,|\,\bm{X}_s) \Big] \\
	&\le (nd+c_1)\PP(|\bm{X}_t|^2 > r) \le (nd+c_1)M/r.
\end{align*}
The last step used the assumption \eqref{asmp:momentbound-dissipative} and the tower property.
The final term in \eqref{pf:momentbound2} is handled by Cauchy-Schwarz,
\begin{align*}
\E\bigg[1_{\{|X^i_t|^2 > r\}}e^{-2c_2t}\int_0^t  e^{2c_2s}  X^i_s \cdot dB^i_s  \bigg] &\le \PP(|X^i_t|^2 > r)^{1/2}\E\bigg[\int_0^t e^{-4c_2(t-s)}|X^i_s|^2\,ds\bigg]^{1/2} \!\le \! \frac{M}{ \sqrt{4rc_2}}.
\end{align*}
We now take expectations in \eqref{pf:momentbound2},  summing over $i=1,\ldots,n$ and combining the above estimates, to find
\begin{align*}
\sum_{i=1}^n\E\big[|X^i_t|^21_{\{|X^i_t|^2 > r\}}\big] &\le \frac{nr'M}{r} + \sum_{i=1}^n\E\big[|X^i_0|^21_{\{|X^i_0|^2 > r'\}}\big] + \frac{(d+c_1)M}{c_2r} + \frac{M}{ \sqrt{4rc_2}}.
\end{align*}
Because $\E|X^i_0|^2 < \infty$ for each $i$, we may send $r\to\infty$ followed by $r' \to \infty$ to deduce that
\begin{equation*}
\lim_{r\to\infty}\sup_{t \ge 0} \E\big[|X^i_t|^21_{\{|X^i_t|^2>r\}}\big] = 0. \qedhere
\end{equation*}
\end{proof}

\subsection{An entropy identity}

We state here a well known identity for the relative entropy between the laws of two diffusion processes. In the case of bounded drifts, this follows easily from Girsanov's theorem, while the general case requires a somewhat more careful approximation. We state a version good enough to cover our needs:

\begin{lemma}[Lemma 4.4(i) of \cite{lacker2023hierarchies}] \label{le:entropy-pathspace}
Let $k \in \N$ and $T > 0$, and let $b^1,b^2 : [0,T] \times \R^k \to \R^k$ be Borel measurable. Suppose $x \mapsto b^2(t,x)$ is Lipschitz, uniformly in $t$. For each $i=1,2$, let $Z^i$ be a weak solution of the SDE
\begin{align*}
dZ^i_t = b^i(t,Z^i_t)dt + \sqrt{2}\,dB_t.
\end{align*}
Let  $P^i \in \P(C([0,T];\R^k))$ denote the law of $(Z^i_t)_{t \in [0,T]}$.  Assume also that
\begin{align}
\E\int_0^T|b^1(t,Z^i_t)-b^2(t,Z^i_t)|^2\,dt < \infty, \quad i=1,2. \label{asmp:entropy-girsanov}
\end{align}
Then
\begin{align*}
H(P^1\,|\,P^2) = H(P^1_0\,|\,P^2_0) + \frac{1}{4}\E\int_0^T |b^1(t,Z^1_t)-b^2(t,Z^1_t)|^2\,dt.
\end{align*}
\end{lemma}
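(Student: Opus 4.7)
The plan is to reduce this identity to a textbook Girsanov calculation through two moves: conditioning on the initial position, then handling possibly unbounded drifts by truncation. First, I would apply the chain rule for relative entropy to the canonical map from paths to their initial value,
\[
H(P^1\,|\,P^2) = H(P^1_0\,|\,P^2_0) + \int_{\R^k} H(P^{1,x_0}\,|\,P^{2,x_0})\,P^1_0(dx_0),
\]
where $P^{i,x_0}$ denotes the regular conditional law of $(Z^i_t)_{t\in[0,T]}$ given $Z^i_0=x_0$. (If $P^1_0 \not\ll P^2_0$, both sides are $\infty$ and the claim is trivial.) It then suffices to verify, for $P^1_0$-a.e.\ $x_0$, the conditional identity
\[
H(P^{1,x_0}\,|\,P^{2,x_0}) = \tfrac14\, \E\!\left[\int_0^T |b^1 - b^2|^2(t,Z^1_t)\,dt \,\Big|\, Z^1_0 = x_0\right],
\]
since integrating against $P^1_0$ reassembles the full formula.

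Second, I would attack the conditioned problem by truncation. Set $b^1_N := b^2 + \chi_N(b^1 - b^2)$ for a smooth cutoff with $|b^1_N - b^2| \le N$, and let $Z^{1,N}$ be the strong solution with drift $b^1_N$ starting at $x_0$, which exists uniquely because $b^2$ is Lipschitz and $b^1_N-b^2$ is bounded. Novikov's condition then holds and Girsanov's theorem produces $dP^{1,N,x_0}/dP^{2,x_0} = \mathcal{E}^N_T$, where
\[
\log \mathcal{E}^N_T = \tfrac{1}{\sqrt 2}\int_0^T (b^1_N - b^2)(t,Z^2_t)\cdot dW^2_t - \tfrac14 \int_0^T |b^1_N - b^2|^2(t,Z^2_t)\,dt.
\]
Taking expectations under $P^{1,N,x_0}$, where the $P^{2,x_0}$-Brownian motion $W^2$ acquires drift $(b^1_N - b^2)/\sqrt 2$ and the resulting It\^o integral is a true martingale by boundedness, yields the desired identity with $b^1_N$ in place of $b^1$.

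Third, I would pass $N \to \infty$. On the right-hand side, $b^1_N \to b^1$ pointwise with $|b^1_N - b^2|^2 \le |b^1 - b^2|^2$, so \eqref{asmp:entropy-girsanov} and dominated convergence handle the limit along $Z^1$. On the left, one half of the equality follows from lower semicontinuity of relative entropy under the (total-variation) convergence $P^{1,N,x_0} \to P^{1,x_0}$ furnished by $\mathcal{E}^N_T \to \mathcal{E}_T$ in $L^1(P^{2,x_0})$; the matching upper bound comes from writing $\log(dP^{1,x_0}/dP^{2,x_0})$ explicitly via Girsanov and taking expectation under $P^{1,x_0}$, where assumption \eqref{asmp:entropy-girsanov} along $Z^1$ ensures the stochastic integral has zero mean. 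Combining with the decomposition from Step 1 finishes the proof.

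The main obstacle is exactly this last martingale step: without boundedness of $b^1$ or an a priori Novikov-type control, one cannot directly differentiate measures in the limit and must instead upgrade the $L^2$-hypothesis \eqref{asmp:entropy-girsanov} into a genuine true-martingale statement. A clean execution localizes with stopping times $\tau_N = \inf\{t : \int_0^t |b^1-b^2|^2(s,Z^1_s)\,ds \ge N\}$, applies the bounded version on each $[0, t\wedge \tau_N]$, and invokes uniform integrability on both sides (on the $P^2$ side via the square-integrability of $b^1-b^2$ along $Z^2$, also from \eqref{asmp:entropy-girsanov}) to take the limit; this is essentially the route followed by \cite{lacker2023hierarchies}.
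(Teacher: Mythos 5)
The paper does not prove this lemma; it is cited verbatim as Lemma 4.4(i) of \cite{lacker2023hierarchies}, so there is no ``paper's own proof'' against which to compare. Assessing your sketch on its merits: the overall plan (chain rule for relative entropy to condition on $Z_0$, then Girsanov for the conditioned problem, then remove the boundedness assumption) is the standard and correct route, and the $\tfrac14$-normalization from the $\sqrt{2}$ diffusion coefficient is right. The weak point is precisely the one you flag. In the truncation variant, the identity you obtain after Girsanov has the expectation $\E_{P^{1,N,x_0}}\int_0^T|b^1_N-b^2|^2$ taken under the \emph{changing} measure $P^{1,N,x_0}$, so ``dominated convergence along $Z^1$'' does not apply as stated — one would first need $P^{1,N,x_0}\to P^{1,x_0}$ in total variation together with uniform integrability, but establishing $\mathcal{E}^N_T\to\mathcal{E}_T$ in $L^1(P^{2,x_0})$ is tantamount to the absolute-continuity statement you are trying to prove, so this variant is circular as sketched. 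Your stopping-time localization $\tau_N = \inf\{t:\int_0^t|b^1-b^2|^2\,ds\ge N\}$ is the cleaner fix: on $\mathcal{F}_{\tau_N}$, Novikov holds and you get the identity with $T$ replaced by $\tau_N\wedge T$; the right side then increases to the claimed quantity by monotone convergence, and the left side increases to $H(P^{1,x_0}\,|\,P^{2,x_0})$ using that $\tau_N\uparrow T$ a.s.\ under both measures (here both halves $i=1,2$ of \eqref{asmp:entropy-girsanov} are used) together with the martingale-convergence/monotonicity of relative entropy along the increasing $\sigma$-fields $\mathcal{F}_{\tau_N}\uparrow\mathcal{F}_T$. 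These last two steps are where the actual work lies and your sketch leaves them implicit; to be a complete proof you would need to spell out (i) why the stopped stochastic integral has zero mean under $P^{1,x_0}$, and (ii) the monotone-convergence argument for $H(P^{1,x_0}|_{\mathcal{F}_{\tau_N}}\,|\,P^{2,x_0}|_{\mathcal{F}_{\tau_N}})\to H(P^{1,x_0}\,|\,P^{2,x_0})$. With those supplied, the argument is correct.
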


\subsection{Remarks on symmetry}

Here we record an interesting general principle: the independent projection strengthens symmetries.

As a first example, suppose the drift $\bm{b}$ commutes with permutations in the sense that $\bm{b}(s\bm{x})=s\bm{b}(\bm{x})$ for all permutations $s$ and all $\bm{x} \in (\R^d)^n$. Here, a permutation $s$ of $[n]=\{1,\ldots,n\}$ acts on a vector $\bm{x} =(x^1,\ldots,x^n)$ via $s\bm{x} = (x^{s(1)},\ldots,x^{s(n)})$. Then, the original SDE \eqref{def:SDE-reference} is obviously exchangeable in the sense that $s\bm{Y}\stackrel{d}{=}\bm{Y}$ for all permutations $s$, if it is initialized as such at time zero. This transfers to the independent projection as well, so that $\bm{X}=(X^1,\ldots,X^n)$ is also exchangeable. But an exchangeable vector of independent random variables is in fact iid. In short, exchangeability of the original SDE leads to iid independent projections. 

This first example generalizes considerably. Consider any group $G$ of permutations of $[n]$, and suppose that $\bm{b}$ commutes with this group in the sense that $\bm{b}(s\bm{x})=s\bm{b}(\bm{x})$ holds for $s \in G$ and $\bm{x} \in (\R^d)^n$. Then $s\bm{X} \stackrel{d}{=} \bm{X}$ for all $s \in G$, which by independence is equivalent to $X^{s(i)} \stackrel{d}{=} X^i$ for all $s\in G$ and all $i \in [n]$. In other words, $X^i \stackrel{d}{=} X^j$ whenever there exists $s \in G$ such that $s(i)=j$; that is, the map $i \mapsto \mathrm{Law}(X^i)$ is constant along orbits of the action of  $G$ on $[n]$. In particular, if $G$ acts \emph{transitively} on $[n]$, then $(X^1,\ldots,X^n)$ are again iid.

\section{Proof of the gradient flow Theorem \ref{th:JKO}} \label{se:proof:JKO}

This section gives the proof of Theorem \ref{th:JKO}, which is in fact just a minor modification of the proof given in the original Jordan-Kinderlehrer-Otto paper \cite{jordan1998variational}. As such, we will omit many details, so that we may focus on the few new ideas. See also \cite[Section 8.3]{santambrogio2015optimal} for an excellent alternative  exposition of the JKO proof.

Note that we may equivalently write the optimization problem \eqref{def:JKO} as\footnote{It is well known that a probability density $\mu$ with $\int|x|^2\mu(x)dx < \infty$ satisfies $\int (\mu\log\mu)_- <\infty$, so that $H(\mu)$ is well-defined in $(-\infty,\infty]$; see the remark on page 9 of \cite{jordan1998variational}.}
\begin{equation*}
\inf_{\mu \in \P_2^{\otimes n}(\R^d)} \bigg( H(\mu) - \int_{\R^{dn}} f\,d\mu + \frac{1}{2\tau}\W_2^2(\mu,\mu_{\tau,k})\bigg).
\end{equation*}
The key point in our context is to make use of two straightforward and well known tensorization properties, valid for any $\mu,\nu \in \P^{\otimes n}_2(\R^d)$:
\begin{align*}
H(\mu) =\sum_{i=1}^n H(\mu^i), \qquad \W_2^2(\mu,\nu) = \sum_{i=1}^n \W_2^2(\mu^i,\nu^i).
\end{align*}
With these identities in mind, if $\mu_{\tau,k+1}=\mu^1_{\tau,k+1} \otimes \cdots \otimes \mu^n_{\tau,k+1}$ is an optimizer for \eqref{def:JKO}, then for each coordinate $i$ we see that $\mu^i_{\tau,k+1}$ must be an optimizer of
\begin{equation}
\inf_{\nu \in \P_2(\R^d)}\bigg( H(\nu) - \int_{\R^d} f_{\mu_{\tau,k+1}}^i\,d\nu + \frac{1}{2\tau}\W_2^2(\nu,\mu^i_{\tau,k})\bigg). \label{eq:JKOcoord-i}
\end{equation}
Here, for $\mu \in \P^{\otimes n}_2(\R^d)$ and $i \in \{1,\ldots,n\}$, we define the conditional expectation $f_\mu^i : \R^d \to \R$ by
\begin{equation*}
f_\mu^i(x^i) := \int_{\R^{d(n-1)}} f(x^i,\bm{x}^{-i})\,\mu^{-i}(d\bm{x}^{-i}).
\end{equation*}
This integral is well-defined because $\mu$ has finite second moments and $f$ has quadratic growth, because $f$ has bounded second order derivatives. In addition, $\nabla f_\mu^i$ is Lipschitz with the same constant as $\nabla f$.

Now, the variational problem \eqref{eq:JKOcoord-i} satisfied by coordinate $i$ is exactly the same as the one studied in the original JKO paper \cite{jordan1998variational}, with the function $\Psi$ therein taken to be $\Psi=-f_{\mu_{\tau,k+1}}^i$.
We may thus follow verbatim the beginning of the proof of \cite[Theorem 5.1]{jordan1998variational}, all the way to the inequality (41) therein:
\begin{equation}
\bigg|\int_{\R^d}\Big(\frac{1}{\tau}(\mu^i_{\tau,k}-\mu^i_{\tau,k-1}) \varphi - (\nabla\varphi \cdot \nabla f_{\mu_{\tau,k}}^i+ \Delta \varphi)\mu^i_{\tau,k}\Big)\bigg| \le \frac{\|\nabla^2\varphi\|_\infty}{2\tau}\W_2^2(\mu^i_{\tau,k},\mu^i_{\tau,k-1}), \label{pf:JKOidentity1}
\end{equation}
where the integral is with respect to Lebesgue measure. The same arguments leading to (45) of \cite{jordan1998variational} show that, for each $T > 0$,
\begin{align}
 \sum_{k=1}^{\lceil T/\tau\rceil}\W_2^2(\mu^i_{\tau,k},\mu^i_{\tau,k-1}) &\le C\tau, \label{pf:JKO-W2bound} \\
 \int_{\R^d}|x^i|^2\mu^i_{\tau,k}(dx^i) &\le C, \ \forall k=0,\ldots,\lceil T/\tau\rceil, \label{pf:JKO-momentbound}
\end{align}
for some constant $C$ which may depend on $T$ but not on $i$, $k$, or $\tau$.
As argued in \cite[(47)]{jordan1998variational}, this allows us to extract a subsequence such that $\mu_{\tau} \to \mu$ weakly in $L^1((0,T) \times \R^{dn})$ as $\tau \to 0$ for each $T > 0$, and also $\mu_{\tau}(t) \to \mu(t)$ weakly in $L^1(\R^{dn})$ for each $t > 0$. 
Note also that $\nabla f$ has linear growth, being Lipschitz, so that \eqref{pf:JKO-momentbound} implies
\begin{align}
\int_{\R^d} |\nabla f_{\mu_{\tau,k}}^i(x^i)| \mu_{\tau,k}^i(dx^i) &\le \int_{\R^{dn}} |\nabla_i f(\bm{x})| \mu_{\tau,k}(d\bm{x}) \le C, \ \forall k=0,\ldots,\lceil T/\tau\rceil, \label{pf:JKO-momentbound2}
\end{align}
for some other constant $C$.

With these preparations, we now explain how to send $\tau \downarrow 0$ in \eqref{pf:JKOidentity1}.
Let $\varphi \in C^\infty_c((-\infty,T) \times \R^{d})$, and set $m_\tau =\lceil T/\tau\rceil$. Then, by summation by parts
\begin{align*}
\int_{(0,T) \times \R^d} \mu^i_{\tau}(t,x) \partial_t\varphi(t,x)\,dxdt &= \sum_{k=1}^{ m_\tau+1 }\int_{(k-1)\tau}^{k\tau} \int_{\R^d} \mu^i_{\tau}(t,x) \partial_t\varphi(t,x)\,dxdt \\
	&= \sum_{k=1}^{ m_\tau+1}  \int_{\R^d}  \mu^i_{\tau,k-1}(x) (\varphi(k\tau,x)- \varphi((k-1)\tau,x))\,dxdt \\
	&= -\sum_{k=1}^{ m_\tau }  \int_{\R^d}  (\mu^i_{\tau,k}(x)-\mu^i_{\tau,k-1}(x))  \varphi(k\tau,x) \,dxdt \\
	&\qquad - \int_{\R^d}   \varphi(0,x)\,\mu^i_{0}(x)dx.
\end{align*}
On the other hand, using the fact that $\nabla f$ has quadratic growth along with  \eqref{pf:JKO-momentbound}, we get
\begin{align*}
\int_{(0,T) \times \R^d} &\mu^i_{\tau}(t,x)(\nabla \varphi(t,x)\cdot \nabla f_{\mu_{\tau}(t)}^i(x) + \Delta \varphi(t,x))  \,dxdt \\
	&= \sum_{k=0}^{ m_\tau }\int_{k\tau}^{(k+1)\tau} \int_{\R^d}\mu^i_{\tau,k}(x)(\nabla \varphi(t,x)\cdot \nabla f_{\mu_{\tau,k}}^i(x) + \Delta \varphi(t,x))\,dxdt \\
	&= (m_\tau+1)O(\tau^2) + \sum_{k=0}^{ m_\tau } \tau\int_{\R^d}\mu^i_{\tau,k}(x)(\nabla \varphi(k\tau,x)\cdot \nabla f_{\mu_{\tau,k}}^i(x) + \Delta \varphi(k\tau,x))\,dx ,
\end{align*}
where the constant hidden in  $O(\tau^2)$ depends only on  $\varphi$ and  the constant of \eqref{pf:JKO-momentbound2}.
Note that $m_\tau O(\tau^2) = O(\tau)$ as $\tau \downarrow 0$.
Adding these two identities and recalling \eqref{pf:JKOidentity1} and \eqref{pf:JKO-W2bound}, we find
\begin{align*}
\int_{(0,T) \times \R^d} &\mu^i_{\tau}(t,x)(\partial_t\varphi(t,x) + \nabla \varphi(t,x)\cdot \nabla f_{\mu_{\tau}(t)}^i(x) + \Delta \varphi(t,x))  \,dxdt = O(\tau) + \int_{\R^d}  \varphi(0,x)\,\mu^i_{0}(x)dx.
\end{align*}
Note also by definition of $f_{\mu_{\tau}(t)}^i$ that
\begin{align*}
\int_{\R^d} \nabla \varphi(t,x)\cdot \nabla f_{\mu_{\tau}(t)}^i(x)\mu^i_{\tau}(t,x)\,dx &= \int_{\R^{dn}} \nabla \varphi(t,x^i)\cdot \nabla_i f(\bm{x})\mu_{\tau}(t,\bm{x})\,d\bm{x}.
\end{align*}
Combining the two previous equations, we may use the (subsequential) convergence $\mu_{\tau} \to \mu$ to deduce that
\begin{align*}
\int_{(0,T) \times \R^d} &\mu^i(t,x)(\partial_t\varphi(t,x) + \nabla \varphi(t,x)\cdot \nabla f_{\mu(t)}^i(x) + \Delta \varphi(t,x))  \,dxdt = \int_{\R^d}  \varphi(0,x)\,\mu^i_{0}(x)dx,
\end{align*}
which hold for each $i$ and each $\varphi \in C^\infty_c((-\infty,T) \times \R^{d})$.
This shows that $(\mu^i(t))_{t \ge 0}$ is a weak (measure) solution of the Fokker-Planck equation; if we show that it is weakly continuous, we can apply the superposition principle \cite[Theorem 2.5]{trevisan2016well} to deduce that $\mu^i(t)$ must be the time-marginals of a solution of the corresponding SDE \eqref{def:mainSDE} with $b^i=\nabla_if$. By uniqueness of the SDE, this completes the proof, pending the claimed continuity.

To prove weak continuity of $t \mapsto \mu(t)$, we borrow an argument from \cite{santambrogio2015optimal}, which circumvents PDE arguments used to finish the proof in  \cite{jordan1998variational}.
Let $\widetilde\mu_\tau^i(t)$ for $t \in [(k-1)\tau,k\tau]$ be a $\W_2$-geodesic connecting $\mu^i_{\tau,k-1}$ to $\mu^i_{\tau,k}$. Then $\widetilde\mu^i_\tau(t)$ clearly shares with $\mu^i_\tau(t)$ the same weak limit $\mu^i$  (in the probabilist's sense, and along the same subsequence). The same estimates leading to \cite[inequality (8.1.2)]{santambrogio2015optimal} yield
\begin{align*}
\W_2(\widetilde\mu^i_{\tau}(t),\widetilde\mu^i_{\tau}(s)) &\le C \sqrt{(t-s)},
\end{align*}
for some constant $C$.
Note also that \eqref{pf:JKO-momentbound} implies that $\{\widetilde\mu_\tau(t) : t \le T, \tau \in (0,1)\}$ is tight. We can then apply the Arzel\`a-Ascoli theorem to deduce that the weak convergence $\widetilde\mu_\tau(t) \to \mu(t)$ happens uniformly in $t \in [0,T]$. Thus $\mu$ must itself be weakly continuous, and the proof is complete. \hfill \qedsymbol

\section{Proof of long-time behavior} \label{se:proof:longtime}

This section proves the claims of Section \ref{se:longtime}, begining with some preliminary lemmas. Throughout this section, $f$ is assumed as usual to have Lipschitz gradient, and also here it satisfies \eqref{asmp:dissipative}. And, as usual, $\bm{X}$ is the solution of \eqref{def:mainSDE}, with $\mu[T] \in \P(C([0,T];\R^{dn}))$ denoting the law of $(\bm{X}_t)_{t \in [0,T]}$, and $\mu_t$ denoting the law of $\bm{X}_t$. 

\subsection{Preliminaries}

As a first preparation, we show that entropy becomes finite instantaneously, even if it is not initially so. 

\begin{lemma} \label{le:entropycomesdown}
For each $t > 0$, we have $H(\mu_t\,|\,\rho_*) < \infty$.
\end{lemma}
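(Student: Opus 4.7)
The plan is to pivot through the heat-kernel-smoothed initial law $\widetilde{\mu}_t$, defined as the time-$t$ law of $\bm{X}_0 + \sqrt{2}\bm{B}_t$, i.e.\ the convolution $\mu_0 * \gamma_t$ with $\gamma_t$ denoting the centered Gaussian on $\R^{dn}$ with covariance $2tI$. Two elementary facts about $\widetilde{\mu}_t$ will be used: the pointwise upper bound $\widetilde{\mu}_t(\bm{x}) \le (4\pi t)^{-dn/2}$, and the pointwise lower bound
\[
\widetilde{\mu}_t(\bm{x}) \ge (4\pi t)^{-dn/2}\,\mu_0(B_R)\,\exp\bigl(-(|\bm{x}|+R)^2/(4t)\bigr)
\]
for any $R$ with $\mu_0(B_R) > 0$. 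Together these give $|\log\widetilde{\mu}_t(\bm{x})| \le C_t(1+|\bm{x}|^2)$ for a constant $C_t < \infty$ depending only on $t$, $n$, $d$, and $\mu_0$.

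Step 1 is a Girsanov-type bound. I will apply Lemma \ref{le:entropy-pathspace} on $\R^{dn}$ with $b^1(s,\bm{x}) = \widehat{\bm{b}}[\mu_s](\bm{x})$ and $b^2 \equiv 0$. The averaging in the definition of $\widehat{b}^i[\mu_s]$ cannot increase the spatial Lipschitz constant inherited from $b^i$, so $\widehat{\bm{b}}[\mu_\cdot]$ is spatially Lipschitz uniformly in $s$; together with the second moment bound of Proposition \ref{pr:wellposed}, this yields the integrability hypothesis \eqref{asmp:entropy-girsanov} both under $\mathrm{Law}(\bm{X})$ and under the driftless reference process started from $\mu_0$. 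Since both processes share the initial law, the lemma delivers
\[
H\bigl(\mathrm{Law}(\bm{X}[t])\,\big|\,\mathrm{Law}((\bm{X}_0+\sqrt{2}\bm{B}_\cdot)[t])\bigr) = \tfrac{1}{4}\,\E\int_0^t \bigl|\widehat{\bm{b}}[\mu_s](\bm{X}_s)\bigr|^2\,ds < \infty,
\]
and the data processing inequality applied to the time-$t$ evaluation map gives $H(\mu_t\,|\,\widetilde{\mu}_t) < \infty$.

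Step 2 finishes via the chain-rule decomposition
\[
H(\mu_t\,|\,\rho_*) = H(\mu_t\,|\,\widetilde{\mu}_t) + \int \log(\widetilde{\mu}_t/\rho_*)\,d\mu_t,
\]
valid once the right-hand integral is shown to converge absolutely. Expanding $\log(\widetilde{\mu}_t/\rho_*) = \log\widetilde{\mu}_t - f + \log Z$ and combining the preliminary two-sided bound on $|\log\widetilde{\mu}_t|$ with the quadratic growth $|f(\bm{x})| \le C(1+|\bm{x}|^2)$ that follows from $\nabla f$ being Lipschitz yields $|\log(\widetilde{\mu}_t/\rho_*)(\bm{x})| \le C'_t(1+|\bm{x}|^2)$, which is integrable against $\mu_t$ by Proposition \ref{pr:wellposed}. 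This completes the proof.

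The main obstacle, such as it is, lies in Step 1: carefully checking that $\widehat{\bm{b}}[\mu_\cdot]$ has a spatial Lipschitz constant uniform in $s$, and that the integrability condition of Lemma \ref{le:entropy-pathspace} holds under \emph{both} the original and the driftless reference law. Neither point is substantive beyond the Lipschitz hypothesis on $\bm{b}$ and the second moment bound of Proposition \ref{pr:wellposed}, but both must be verified explicitly before the Girsanov identity can be invoked.
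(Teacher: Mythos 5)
Your proof is correct and follows essentially the same route as the paper's: apply the path-space entropy identity of Lemma~\ref{le:entropy-pathspace} against the driftless reference (a $\sqrt{2}$-Brownian motion started from $\mu_0$), project via data processing to get $H(\mu_t\,|\,\mu_0 * \gamma_t) < \infty$, and then decompose $H(\mu_t\,|\,\rho_*) = H(\mu_t\,|\,\mu_0*\gamma_t) + \int \log\big((\mu_0*\gamma_t)/\rho_*\big)\,d\mu_t$, using quadratic growth of $f$ and the bounded density of $\mu_0 * \gamma_t$. The one cosmetic difference is that you establish a Gaussian lower bound on $\mu_0*\gamma_t$ to get absolute convergence of the last integral, while the paper only uses the uniform upper bound (which suffices, since the negative part of the relative-entropy integrand is automatically bounded by $1/e$); this is immaterial.
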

\begin{proof}
Let $P[t]$ denote the law of $(\bm{X}_0+ \bm{B}_s)_{s \in [0,t]}$, a Brownian motion initialized from law $\mu_0$. Then, using Lemma \ref{le:entropy-pathspace} (with $b^2\equiv 0$),
\begin{align*}
H(\mu[t]\,|\,P[t]) &= \frac14\E\sum_{i=1}^n\int_0^t|\E[\nabla_if(\bm{X}_s)\,|\,X^i_s]|^2\,ds \le \frac14\E \int_0^t|\nabla f(\bm{X}_s) |^2\,ds < \infty,
\end{align*}
because $\nabla f$ has linear growth and $\bm{X}_s$ is integrable by Proposition \ref{pr:wellposed}. By the data processing inequality for relative entropy, $H(\mu_t \,|\, \mu_0 * \gamma_t) \le H(\mu[t] \,|\, P[t]) < \infty$, where $\gamma_t$ is the centered Gaussian measure in $\R^{dn}$ with covariance matrix $tI$. Then, note that
\begin{align*}
H(\mu_t\,|\,\rho_*) &= H(\mu_t\,|\,\mu_0 * \gamma_T) - \int f\,d\mu_t + \int \log (\mu_0* \gamma_t)\,d\mu_t.
\end{align*}
We just saw that $H(\mu_t\,|\,\mu_0 * \gamma_T) < \infty$. We also have $f \in L^1(\mu_t)$ because $f$ has quadratic growth and $\mu_t$ has finite second moment. Finally, note that $\mu_0* \gamma_t \le (2\pi t)^{-dn/2}$ pointwise.
\end{proof}

Next, we show that the measure flow $(\mu_t)$ is uniformly continuous:

\begin{lemma} \label{le:unifcont}
The map $\R_+ \ni t \mapsto \mu_t \in (\P_2(\R^{dn}),\W_2)$ is uniformly continuous.
\end{lemma}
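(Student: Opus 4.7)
The plan is to bound $\W_2(\mu_s,\mu_t)$ by the $L^2$-norm of $\bm{X}_t-\bm{X}_s$, using the synchronous coupling provided by the solution itself, and then to exploit the uniform second-moment bound from Lemma \ref{le:momentbound} together with the Lipschitz (hence linear-growth) assumption on $\bm{b}$.

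First, for any $0 \le s \le t$, the pair $(\bm{X}_s,\bm{X}_t)$ is a coupling of $\mu_s$ and $\mu_t$, so
\begin{equation*}
\W_2^2(\mu_s,\mu_t) \le \E|\bm{X}_t-\bm{X}_s|^2.
\end{equation*}
From the SDE \eqref{def:mainSDE} in the probabilistic form \eqref{def:mainSDE-probabilistic},
\begin{equation*}
\bm{X}_t-\bm{X}_s = \int_s^t \widehat{\bm{b}}[\mu_r](\bm{X}_r)\,dr + \sqrt{2}(\bm{B}_t-\bm{B}_s),
\end{equation*}
so by $(a+b)^2 \le 2a^2+2b^2$ and the Cauchy--Schwarz inequality,
\begin{equation*}
\E|\bm{X}_t-\bm{X}_s|^2 \le 2(t-s)\int_s^t \E|\widehat{\bm{b}}[\mu_r](\bm{X}_r)|^2\,dr + 4dn(t-s).
\end{equation*}

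Second, I would dispense with the conditional expectations via Jensen: for each $i$,
\begin{equation*}
|\widehat{b}^i[\mu_r](X^i_r)|^2 = \big|\E[b^i(\bm{X}_r)\mid X^i_r]\big|^2 \le \E\big[|b^i(\bm{X}_r)|^2\,\big|\,X^i_r\big],
\end{equation*}
so summing over $i$ and taking expectations gives $\E|\widehat{\bm{b}}[\mu_r](\bm{X}_r)|^2 \le \E|\bm{b}(\bm{X}_r)|^2$. The Lipschitz hypothesis on $\bm{b}$ gives $|\bm{b}(\bm{x})|^2 \le 2|\bm{b}(0)|^2 + 2L^2|\bm{x}|^2$, and Lemma \ref{le:momentbound} (which applies because \eqref{asmp:dissipative} together with the global Lipschitz bound on $\nabla f$ delivers the drift dissipativity \eqref{asmp:momentbound-dissipative}) then implies
\begin{equation*}
M := \sup_{r\ge 0}\E|\bm{b}(\bm{X}_r)|^2 < \infty.
\end{equation*}
Combining these estimates yields
\begin{equation*}
\W_2^2(\mu_s,\mu_t) \le 2M(t-s)^2 + 4dn(t-s),
\end{equation*}
which proves uniform continuity (indeed, $1/2$-H\"older continuity) on $\R_+$.

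There is really no serious obstacle here; the only mild subtlety is noticing that dissipativity of $\bm{b}$ in the sense of \eqref{asmp:momentbound-dissipative} does follow from \eqref{asmp:dissipative} combined with the assumption that $\nabla f$ is Lipschitz (which gives linear growth of $\bm{b}$ and thus permits absorbing the $|\bm{b}(0)|\cdot|\bm{x}|$-type term into $c_1-c_2|\bm{x}|^2$ after adjusting $c_1$ and $c_2$). Once the uniform bound $M$ is in hand, the rest is routine SDE estimation.
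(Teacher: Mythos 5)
Your proposal is correct and follows essentially the same route as the paper: bound $\W_2^2(\mu_s,\mu_t)$ via the synchronous coupling $(\bm{X}_s,\bm{X}_t)$, use the SDE representation plus Cauchy--Schwarz, then invoke the uniform second-moment bound from Lemma \ref{le:momentbound} and linear growth of $\bm{b}$. One small clarification: in the long-time section one already has $\bm{b}=\nabla f$, so \eqref{asmp:dissipative} is literally condition \eqref{asmp:momentbound-dissipative}; no Lipschitz-based absorption argument is needed to pass from one to the other.
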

\begin{proof}
Recall that $\nabla f$ is Lipschitz, say with constant $L$.
Noting that $(\bm{X}_t,\bm{X}_s)$ couples $(\mu_t,\mu_s)$, 
\begin{align*}
\W_2^2(\mu_t,\mu_s) &\le \E|\bm{X}_t-\bm{X}_s|^2 \le 2(t-s)\sum_{i=1}^n\int_s^t\E |\E[\nabla_if(\bm{X}_r)\,|\,X^i_r]|^2\,dr + 4\E|\bm{B}_t-\bm{B}_s|^2 \\
	&\le 4(t-s)^2|\nabla f(0)|^2 + 4L^2(t-s)^2\sup_{r \ge 0}\E|\bm{X}_r|^2 + 4dn(t-s).
\end{align*}
The assumption \eqref{asmp:dissipative} lets us apply Lemma \ref{le:momentbound} to deduce that the supremum is finite.
\end{proof}

\begin{remark}
While we opted for direct proofs using the SDE, it is worth noting that Lemmas \ref{le:entropycomesdown} and \ref{le:unifcont} could also be deduced quickly from the convergence of the JKO scheme in Theorem \ref{th:JKO} and estimates thereof.
\end{remark}

We lastly record the stratightforward observation that the solutions of the mean field equations are invariant measures for the SDE. The converse is true and even more straightforward to prove, but we will not need it.

\begin{lemma} \label{le:MF-invariant}
Suppose $\mu_0$ solves the mean field equations \eqref{def:MFequations}. Then $\mu_t = \mu_0$ for all $t \ge 0$.
\end{lemma}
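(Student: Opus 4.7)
The plan is to show that the stationary measure flow $\mu_t \equiv \mu_0$ is a solution of the independent projection SDE \eqref{def:mainSDE}, and then invoke uniqueness from Proposition \ref{pr:wellposed}. Equivalently, since the marginals completely determine the solution (they are independent), I will verify that $\mu_t^i \equiv \mu_0^i$ solves each Fokker-Planck equation in the coupled system \eqref{def:newFokkerPlanck-system}.

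The first step is to unpack the mean field equations \eqref{def:MFequations}. Writing
\begin{equation*}
\log \mu_0^i(x^i) = \E_{\mu_0}[f(\bm{X}) \,|\, X^i=x^i] - \log Z_i,
\end{equation*}
and taking the gradient in $x^i$ (using that $\mu_0$ is a product measure so that $x^i \mapsto \E_{\mu_0}[f(\bm{X})\,|\,X^i=x^i] = \int f(x^i,\bm{x}^{-i})\,\mu_0^{-i}(d\bm{x}^{-i})$ and the dominated convergence theorem justifies differentiation under the integral, since $\nabla f$ has linear growth and $\mu_0 \in \P_2^{\otimes n}(\R^d)$), I obtain
\begin{equation*}
\nabla_i \log \mu_0^i(x^i) = \E_{\mu_0}[\nabla_i f(\bm{X}) \,|\, X^i=x^i] = \widehat{b}^i[\mu_0](x^i),
\end{equation*}
where the last equality uses $\bm{b}=\nabla f$. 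This is the key identity linking the mean field equations to the drift of the independent projection.

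Given this identity, the Fokker-Planck equation \eqref{def:newFokkerPlanck-system} evaluated at the constant-in-time candidate $\mu_t^i=\mu_0^i$ becomes
\begin{equation*}
-\mathrm{div}\bigl(\mu_0^i \,\widehat{b}^i[\mu_0]\bigr) + \Delta \mu_0^i = -\mathrm{div}\bigl(\mu_0^i \,\nabla \log \mu_0^i\bigr) + \Delta \mu_0^i = -\mathrm{div}(\nabla \mu_0^i) + \Delta \mu_0^i = 0.
\end{equation*}
Hence the stationary flow $\mu_t = \mu_0$ is a weak solution of the coupled system \eqref{def:newFokkerPlanck-system}, and correspondingly $(\bm{X}_0, \bm{X}_0, \ldots)$ together with $\bm{B}$ trivially fails to be an SDE solution — but more usefully, one constructs a genuine SDE solution by taking independent OU-type coordinates $dX^i_t = \nabla \log \mu_0^i(X^i_t)\,dt + \sqrt{2}\,dB^i_t$ with $X^i_0\sim\mu_0^i$, for which $\mu_0^i$ is the invariant measure of the classical Langevin dynamics with potential $-\log \mu_0^i$. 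Because $\widehat{b}^i[\mu_0](x) = \nabla \log \mu_0^i(x)$, this coordinate SDE agrees with the independent projection SDE as long as the law remains $\mu_0$. Uniqueness from Proposition \ref{pr:wellposed} then forces this to be \emph{the} solution of \eqref{def:mainSDE}, so $\mathrm{Law}(\bm{X}_t)=\mu_0$ for all $t$.

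The only nuisance is the justification of differentiating under the conditional expectation and verifying Lipschitz regularity of $\nabla \log \mu_0^i$ so that the coordinate SDEs above are well-posed; both are handled by the standing assumption that $\nabla f$ is Lipschitz (which gives $\widehat{b}^i[\mu_0]$ Lipschitz by dominated convergence) and the integrability in $\P_2^{\otimes n}$. No substantial obstacle is expected — the entire statement is really a direct consequence of the observation that the mean field equations are precisely the stationarity condition for the marginal Fokker-Planck system.
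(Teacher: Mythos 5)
Your proof follows the same two-stage structure as the paper's: differentiate the mean field equations to obtain $\nabla \log \mu_0^i(x^i) = \E_{\mu_0}[\nabla_i f(\bm{X})\,|\,X^i=x^i] = \widehat{b}^i[\mu_0](x^i)$, observe that the constant-in-time flow is therefore a stationary solution of the marginal Fokker--Planck system, and conclude by uniqueness of the independent-projection SDE. The difference lies in how you pass from the PDE level back to the SDE level. The paper invokes the superposition principle of Trevisan to assert that the constant weak Fokker--Planck solution is realized as the marginal law of \emph{some} solution of \eqref{def:mainSDE}, then matches initial conditions. You instead construct the SDE solution explicitly, by solving the frozen-coefficient Langevin systems $dX^i_t = \nabla\log\mu_0^i(X^i_t)\,dt + \sqrt{2}\,dB^i_t$ with $X^i_0 \sim \mu_0^i$, independently across $i$, and appealing to the classical fact that a Langevin diffusion with Lipschitz drift $\nabla\log m$ preserves its reversible measure $m$. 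Once that is granted, the frozen process has law $\mu_0$ for all $t$, its drift coincides with the McKean--Vlasov drift $\widehat{b}^i[\mu_t]$ because $\mu_t = \mu_0$, and Proposition \ref{pr:wellposed} identifies it as \emph{the} solution. Both arguments lean on a textbook fact relating SDEs to their Fokker--Planck equations; yours avoids the superposition theorem at the cost of invoking Langevin invariance, and it needs (and correctly supplies) the observation that $\widehat{b}^i[\mu_0]$ inherits the Lipschitz constant of $\nabla f$ so that the frozen SDE is well-posed. A small stylistic note: the parenthetical sentence beginning ``together with $\bm{B}$ trivially fails to be an SDE solution'' is a false start that should be deleted, and ``OU-type'' is a misnomer since $\nabla\log\mu_0^i$ need not be linear.
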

\begin{proof}
Differentiate the mean field equations to find, because $\mu_0$ is a product measure, that
\begin{equation*}
\nabla \log \mu^i_0(x^i) = \nabla\E_{\mu_0}[  f(\bm{X})\,|\,X^i=x^i] = \E_{\mu_*}[\nabla_i f(\bm{X})\,|\,X^i=x^i].
\end{equation*}
Letting $\nu_t=\mu_0$ for all $t \ge 0$, integration by parts shows that
\begin{equation*}
 \frac{d}{dt}\int \varphi\,d\nu^i_t  = 0 = \E_{\mu_0}\big[ \nabla\varphi(X^i) \cdot \E[\nabla_i f(\bm{X})\,|\,X^i ] + \Delta\varphi(X^i )\big].
\end{equation*}
By the superposition principle \cite[Theorem 2.5]{trevisan2016well}, there exists a solution $\bm{X}'$ of the SDE \eqref{def:mainSDE} such that $\bm{X}'_t \sim \nu_t$ for all $t$. By $\nu_0=\mu_0$ and the uniqueness of the SDE (Proposition \ref{pr:wellposed}), we have $\bm{X}_t \stackrel{d}{=} \bm{X}'_t$ for all $t \ge 0$, or $\mu_t=\nu_t = \mu_0$.
\end{proof}

As a final preparation, we prove a crucial stability property for $\widetilde{I}$, which was defined in \eqref{def:projectedFisher}.

\begin{lemma} \label{le:coercive}
Suppose $\nu_m \in \P^{\otimes n}_2(\R^d)$ satisfies 
\begin{equation}
\lim_{m\to\infty} \widetilde{I}(\nu_m\,|\,\rho_*) = 0. \label{asmp:keyI}
\end{equation}
If $\W_2(\nu_m,\nu) \to 0$ for some $\nu \in \P^{\otimes n}_2(\R^d)$, then $\nu$ satisfies  the mean field equations \eqref{def:MFequations}.
\end{lemma}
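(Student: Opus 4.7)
The plan is to use the hypothesis $\widetilde{I}(\nu_m\,|\,\rho_*)\to 0$ as a form of $L^2$ convergence, pass it to the limit against test functions, and recognize the resulting identity as \eqref{def:MFequations}. Since $\nu_m=\nu_m^1\otimes\cdots\otimes\nu_m^n$, the formula $\nabla_i\log(d\nu_m/d\rho_*)(\bm{x})=\nabla\log\nu_m^i(x^i)-\nabla_if(\bm{x})$ together with conditioning on $X^i$ rewrites the projected Fisher information as
\begin{equation*}
\widetilde{I}(\nu_m\,|\,\rho_*)=\sum_{i=1}^n\int_{\R^d}|\nabla\log\nu_m^i(x^i)-\widehat{b}^i[\nu_m](x^i)|^2\,\nu_m^i(dx^i).
\end{equation*}
In particular, for $m$ large enough the weak gradient $\nabla\log\nu_m^i$ exists and belongs to $L^2(\nu_m^i;\R^d)$, and $\|\nabla\log\nu_m^i-\widehat{b}^i[\nu_m]\|_{L^2(\nu_m^i)}\to 0$ for each $i$.

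Fix $i$ and $\psi\in C_c^\infty(\R^d;\R^d)$. Integration by parts against the density $\nu_m^i$ (justified by $\nabla\nu_m^i\in L^1(\R^d)$, which follows from the $L^2$ bound just noted and Cauchy-Schwarz) combined with Cauchy-Schwarz gives
\begin{equation*}
\int \widehat{b}^i[\nu_m]\cdot\psi\,d\nu_m^i=\int\nabla\log\nu_m^i\cdot\psi\,d\nu_m^i+o(1)=-\int\mathrm{div}\,\psi\,d\nu_m^i+o(1),
\end{equation*}
where the $o(1)$ is controlled by $\|\psi\|_\infty\,\widetilde{I}(\nu_m\,|\,\rho_*)^{1/2}$. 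The right-hand side converges to $-\int\mathrm{div}\,\psi\,d\nu^i$ by weak convergence $\nu_m^i\to\nu^i$. For the left-hand side I would use the uniform estimate $|\widehat{b}^i[\nu_m](x^i)-\widehat{b}^i[\nu](x^i)|\le L\,\W_2(\nu_m^{-i},\nu^{-i})$, where $L$ is the Lipschitz constant of $\nabla f$ and the bound comes from Kantorovich duality; combined with the continuity and boundedness on $\mathrm{supp}(\psi)$ of the limit $\widehat{b}^i[\nu]$, this yields $\int\widehat{b}^i[\nu_m]\cdot\psi\,d\nu_m^i\to\int\widehat{b}^i[\nu]\cdot\psi\,d\nu^i$.

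Combining, we arrive at
\begin{equation*}
\int_{\R^d}\widehat{b}^i[\nu]\cdot\psi\,d\nu^i=-\int_{\R^d}\mathrm{div}\,\psi\,d\nu^i,\quad\forall\,\psi\in C_c^\infty(\R^d;\R^d),
\end{equation*}
which says that $\nu^i$ has a weak gradient equal to $\widehat{b}^i[\nu]\,\nu^i$. Because $\mu$ is a product measure, $\widehat{b}^i[\nu](x^i)=\nabla_{x^i}\E_\nu[f(\bm{X})\,|\,X^i=x^i]$, so antidifferentiating gives $\nu^i(dx^i)\propto\exp(\E_\nu[f(\bm{X})\,|\,X^i=x^i])\,dx^i$, which is precisely the mean field equation in the $i$th coordinate. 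Holding for each $i$, this is \eqref{def:MFequations}.

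The main obstacle is the passage to the limit in the product $\widehat{b}^i[\nu_m]\cdot\psi$ integrated against the varying measure $\nu_m^i$, which simultaneously involves nonlinear dependence on $\nu_m^{-i}$ and a moving base measure. The key is to decouple these two sources of variation: the Lipschitz regularity of $\nabla f$ promotes the $\W_2$-convergence $\nu_m^{-i}\to\nu^{-i}$ into \emph{uniform} convergence $\widehat{b}^i[\nu_m]\to\widehat{b}^i[\nu]$ on $\R^d$, after which the base-measure weak convergence $\nu_m^i\to\nu^i$ can be handled separately. Integrability of $\widehat{b}^i[\nu]$ in $L^2(\nu^i)$ follows from linear growth of $\nabla f$ together with $\nu\in\P_2^{\otimes n}(\R^d)$.
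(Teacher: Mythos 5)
Your proof is correct and follows essentially the same route as the paper's: rewrite $\widetilde{I}(\nu_m\,|\,\rho_*)$ coordinatewise as $\sum_i\|\nabla\log\nu_m^i-\widehat{b}^i[\nu_m]\|_{L^2(\nu_m^i)}^2$, pair the residual with a compactly supported test vector field, integrate by parts, and pass to the limit to read off the weak form of the mean field equations. The only step where you genuinely diverge is the limit passage: you first upgrade $\W_2(\nu_m^{-i},\nu^{-i})\to 0$ into \emph{uniform} convergence $\widehat{b}^i[\nu_m]\to\widehat{b}^i[\nu]$ using Lipschitzness of $\nabla f$ and Kantorovich duality, and only then invoke weak convergence of the fixed integrand $\widehat{b}^i[\nu]\cdot\psi$ against $\nu_m^i$; the paper instead passes the limit directly in $\int(\nabla_i f\cdot h+\mathrm{div}\,h)\,d\nu_m$ under the banner of ``boundedness of $h\cdot\nabla f$,'' which is slightly imprecise since $h(x^i)\cdot\nabla_i f(\bm{x})$ is only of linear growth in $\bm{x}^{-i}$ (the fix there is the same $\W_2$-plus-linear-growth argument you make explicit). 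Your approach also avoids the paper's preliminary reduction to strictly positive densities via mixing with a Gaussian, which is a small bonus since that convex combination does not, in general, remain a product measure.
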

\begin{proof}
Let us assume without loss of generality that $\nu_m$ admits a strictly positive density for each $m$; if it does not, we may simply replace it with $\bar\nu_m := (1-(1/m))\nu_m +(1/m)\gamma$, where $\gamma$ is the standard Gaussian, and note easily that $\bar\nu_m$ satisfies the same assumption \eqref{asmp:keyI} as well as $\W_2(\bar\nu_m,\nu) \to 0$. 

Fix $i\in\{1,\ldots,n\}$, and define 
\begin{align*}
\xi_m^i(x^i) := \E_{\nu_m}[\nabla_i f(\bm{X})\,|\,X^i=x^i] - \nabla \log \nu_m^i(x^i).
\end{align*}
We may assume $\widetilde{I}(\nu_m\,|\,\rho_*) < \infty$ for all $m$, which implies that $\log\nu_m^i$ is weakly differentiable with $\nu_m^i$-square-integrable gradient.
The assumption \eqref{asmp:keyI} states that $\E_{\nu_m}|\xi_m^i(X^i)|^2  \to 0$. For any $h \in C^\infty_c(\R^d;\R^d)$, we have
\begin{align*}
\E_{\nu_m}[\xi_m^i(X^i) \cdot h(X^i)] &= \E_{\nu_m}\big[ \nabla_i f(\bm{X}) \cdot h(X^i) - \nabla_i\log \nu_m^i(X^i) \cdot h(X^i)\big].
\end{align*}
By integration by parts, the last term rewrites as
\begin{align*}
\E_{\nu_m}[\nabla \log \nu_m^i(X^i) \cdot h(X^i)\big]=\int_{\R^d} h(x^i) \cdot \nabla\nu_m^i(x^i) \,dx^i = -\E_{\nu_m}[\mathrm{div}\,h(X^i)],
\end{align*}
where we used strict positivity of $\nu_m$ to avoid boundary terms.
Hence,
\begin{align*}
0 &= \lim_m\E_{\nu_m}[\xi_m^i(X^i) \cdot h(X^i)] = \lim_m\E_{\nu_m}\big[ \nabla_i f(\bm{X}) \cdot h(X^i) + \mathrm{div}\, h(X^i)\big] \\
	&= \E_{\nu}\big[ \nabla_i f(\bm{X}) \cdot h(X^i) + \mathrm{div}\, h(X^i)\big],
\end{align*}
where the last limit follows from the weak convergence $\nu_m\to\nu$ and boundedness of $h \cdot \nabla f$ and $\mathrm{div} \,h$. 
Rearranging,
\begin{align*}
\int_{\R^d} \mathrm{div}\,h(x^i) \, \nu^i_m(dx^i) &= - \int_{\R^{dn}}\nabla_i f(\bm{x}) \cdot h(x^i)\,\nu_m(d\bm{x}) \\
	&=  - \int_{\R^d}\E_\nu[\nabla_i f(\bm{X})\,|\,X^i=x^i] \cdot h(x^i)\,\nu_m^i(dx^i).
\end{align*}
As $h$ was arbitrary, we deduce the identity of weak derivatives:
\begin{equation*}
\nabla_i \nu^i(x^i) = \nu^i(x^i)\E[\nabla_i f(\bm{X})\,|\,X^i=x^i]. \qedhere
\end{equation*}
\end{proof}

\subsection{Proof of Theorem \ref{th:main-MF}} \label{se:longtime:proofs}

The convergence analysis revolves around a computation of relative entropy. 
By Lemma \ref{le:entropycomesdown}, we may shift time and assume that $H(\mu_0\,|\,\rho_*) <\infty$.
Let $b^i_t(x^i)=\int \nabla_i f(x^i,\bm{x}^{-i})\,\mu_t^{-i}(d\bm{x}^{-i})$ denote the drift of $\mu^i$, and $\bm{b}_t(\bm{x})=(b^1_t(x^1),\ldots,b^n_t(x^n))$. We perform a well known formal calculation using the Fokker-Planck equation \eqref{def:FokkerPlanck}: 
\begin{align*}
\frac{d}{dt}H(\mu_t\,|\,\rho_*) &= \frac{d}{dt}\int \mu_t\log\frac{\mu_t}{\rho_*} = \int  \log\frac{\mu_t}{\rho_*}\partial_t\mu_t \\
	&= \int  \log\frac{\mu_t}{\rho_*}\big(-\mathrm{div}(\mu_t\bm{b}_t) + \Delta\mu_t\big) \\
	&= \int  \nabla \log\frac{\mu_t}{\rho_*} \cdot \big(\bm{b}_t - \nabla\log\mu_t\big) \,\mu_t,
\end{align*}
where the integrals are with respect to Lebesgue measure on $\R^{dn}$. 
Using $\nabla\log\rho_*=f$ and $b^i_t(x^i)=\E[\nabla_if(\bm{X}_t)\,|\,X^i_t=x^i]$,  this rewrites as
\begin{align}
\frac{d}{dt}H(\mu_t\,|\,\rho_*) &= \sum_{i=1}^n\E\Big[\big(\nabla\log\mu^i_t(X^i_t) - \nabla_i f(\bm{X}_t)\big) \cdot \big(\E[\nabla_if(\bm{X}_t)\,|\,X^i_t]-\nabla\log\mu^i_t(X^i_t)\big)\Big] \nonumber \\
	&= -\sum_{i=1}^n\E\big|\E[\nabla_i f(\bm{X}_t)\,|\,X^i_t] - \nabla_i\log \mu^i_t(X^i_t)\big|^2 \nonumber \\
	&= -\widetilde{I}(\mu_t\,|\,\rho_*). \label{pf:entropycalculation}
\end{align}
where $\widetilde{I}$ was defined in \eqref{def:projectedFisher}.
This implies, for $T > 0$, that
\begin{equation}
H(\mu_T\,|\,\rho_*) + \int_0^T\widetilde{I}(\mu_t\,|\,\rho_*)\,dt \le H(\mu_0\,|\,\rho_*). \label{ineq:entropycalculation}
\end{equation}
Because $H \ge 0$ and $H(\mu_0\,|\,\rho_*) < \infty$, we deduce that and
\begin{align}
\sup_{t \ge 0} H(\mu_t\,|\,\rho_*) < \infty, \qquad 
\int_0^\infty \widetilde{I}(\mu_t\,|\,\rho_*)\,dt < \infty, \label{pf:int0-inf}
\end{align}

The calculations above were formal, in the sense that a proper proof should justify the smoothness and integrability of the quantities involved. The densities $\mu_t(x)$ are all smooth on $(t,x) \in (0,\infty) \times \R^d$ because $\bm{b}$ is smooth and Lipschitz, and one needs only take limits after suitable truncations of the function $x\log x$. In fact, we only need \eqref{ineq:entropycalculation} as an inequality, not equality, which significantly simplifies these approximation arguments.  This is well-traveled terrain, so we omit details; see \cite[Proof of Lemma 2.4]{BogRocSha} for arguments of this nature.

It follows from \eqref{pf:int0-inf} that $\{\mu_t : t \ge 0\}$ is tight, because it is contained in a sublevel set of $H(\cdot\,|\,\rho_*)$. The uniform integrability of Lemma \ref{le:momentbound} then implies that $\{\mu_t : t \ge 0\}$ is precompact in $(\P_2(\R^{dn}),\W_2)$.

Let $\epsilon > 0$. By uniform continuity (Lemma \ref{le:unifcont}), we may find $\delta > 0$ such that $\W_2(\mu_t,\mu_s) \le \epsilon$ for all $t,s \ge 0$ with  $|t-s| \le \delta$. By \eqref{pf:int0-inf} and Tonelli's theorem, we have
\begin{align*}
\int_0^\delta \sum_{k=0}^\infty \widetilde{I}(\mu_{u+k\delta}\,|\,\rho_*)\,du < \infty,
\end{align*}
which implies that there exists a Borel set $I_\delta \subset [0,\delta]$ of full Lebesgue measure such that
\begin{align*}
\lim_{k\to\infty}\widetilde{I}(\mu_{u+k\delta}\,|\,\rho_*) = 0,\quad \forall u \in I_\delta.
\end{align*}
By Lemma \ref{le:coercive}, for each $u \in I_\delta$ we deduce that every $\W_2$-limit point of the sequence $(\mu_{u+k\delta})_{k\in\N}$ must belong to the set $S_{\mathrm{MF}}$ of solutions of the mean field equations \eqref{def:MFequations}.
Because this sequence is precompact, it is easy to deduce that in fact  $\lim_k \W_2(\mu_{u+k\delta},S_{\mathrm{MF}})=0$ for all $u \in I_\delta$.
Now fix $u_* \in I_\delta$, and define $\tau(t)$ for each $t \ge 0$ to be the smallest element of $\{u_*+(k-1)\delta :  k \in \N\}$ which is larger than $t$. Then $|t -\tau(t)| \le \delta$, and thus $\W_2(\mu_{t},\mu_{\tau(t)}) \le \epsilon$. Using the triangle inequality,
\begin{align*}
\W_2(\mu_{t},S_{\mathrm{MF}}) \le \epsilon + \W_2(\mu_{\tau(t)},S_{\mathrm{MF}}).
\end{align*}
We know also that
\begin{align*}
\lim_{t\to\infty}\W_2(\mu_{\tau(t)},S_{\mathrm{MF}}) = \lim_{k\to\infty}\W_2(\mu_{u_*+k\delta},S_{\mathrm{MF}}) =0.
\end{align*}
As $\epsilon$ was arbitrary, we deduce that $\W_2(\mu_{t},S_{\mathrm{MF}})\to 0$.  This proves the first claim of Theorem \ref{th:main-MF}.

It remains to show the converegence of entropy \eqref{eq:entropyconvergence}. For this we appeal to regularity results from the theory of parabolic PDEs, namely a combination of an upper bound on the density with interior H\"older estimates. First, note that on any compact set $S \subset \R^d$, the function $\E[\nabla_i f(\bm{X}_t)\,|\,X^i_t=x^i]$ is bounded in $(t,x^i) \in (0,\infty) \times S$. By It\^o's formula, the measure flow $(\mu^i_t)_{t \ge 0}$ solves (in the distributional sense) the Fokker-Planck equation with this drift. Applying \cite[Corollary 7.2.2]{BKRSbook} shows that (the density of) $\mu^i_t(x^i)$ is bounded on $[\delta,\infty) \times \R^{d}$ for any $\delta > 0$. We refer to \cite[Corollary 6.4.3]{BKRSbook} for a statement of interior H\"older estimates which is suitable for our needs (and in fact much more general):
It tells us that $\mu^i_t$ admits a continuous density for each $t > 0$, and for each $R > 0$ there exist constants $(\alpha,\beta,K)$  such that
\begin{align*}
|\mu_t^i(x)-\mu_{t'}(x')| \le K(|x-x'|^\alpha + |t-t'|^{\beta}), 
\end{align*}
for each $x,x' \in \R^{d}$ with norm at most $R$, and each $t,t' \ge1$ with $|t-t'| \le 1$.
By Arzel\`a-Ascoli, for any compact set $S \subset \R^d$ the family $\{\mu^i_t|_S : t \ge 1\} \subset C(S)$ is precompact.

With these preparations, we are ready to show that $H(\mu_{t_n}\,|\,\rho_*) \to H(\mu_*\,|\,\rho_*)$, for any given $\mu_* \in S_{\mathrm{MF}}$ and any sequence $t_n \to \infty$ for which $\W_2(\mu_{t_n}, \mu_*) \to 0$.  By the above application of Arzel\`a-Ascoli, we know that in fact the density $\mu_{t_n}$ converges pointwise to that of $\mu_*$, uniformly on compact sets in $\R^{dn}$.
For $r > 0$, let $B_r$ denote the centered ball of radius $r$ in $\R^{dn}$. Recall that $\mu_{t_n}$ are uniformly bounded, so that $\log \mu_{t_n} \le C$ for some constant $C > 0$. Then
\begin{align*}
H(\mu_{t_n}\,|\,\rho_*) = \int_{B_r}\mu_{t_n}\log\frac{\mu_{t_n}}{\rho_*} + \int_{B_r^c}\mu_{t_n}\log\frac{\mu_{t_n}}{\rho_*}
\end{align*}
By uniform convergence and the fact that $\rho_*$ is bounded away from zero on $B_r$, we have
\begin{align*}
\int_{B_r}\mu_{t_n}\log\frac{\mu_{t_n}}{\rho_*} \to \int_{B_r}\mu_*\log\frac{\mu_*}{\rho_*}.
\end{align*}
For the other term, use $\log(\mu_{t_n}/\rho_*) \le C - f$ to deduce
\begin{align*}
\int_{B_r^c}\mu_{t_n}\log\frac{\mu_{t_n}}{\rho_*} \le  \int_{B_r^c}(C+|f|)\,\mu_{t_n}.
\end{align*}
Because $f$ has quadratic growth, the uniform integrability of Lemma \ref{le:momentbound} ensures that
\begin{align*}
\lim_{r\to\infty}\sup_n\int_{B_r^c}(C+|f|)\,\mu_{t_n} = 0.
\end{align*}
Using these observations, we may send $n\to\infty$ and then $r\to\infty$ to deduce that $H(\mu_{t_n}\,|\,\rho_*) \to H(\mu_*\,|\,\rho_*)$, completing the proof. \hfill \qedsymbol

\subsection{Proof of Theorem \ref{th:convex-limit}}

The claims of Theorem \ref{th:convex-limit}(i,ii) follow immediately from Theorem \ref{th:main-MF}.
Hence, we prove only (iii) here. 
We couple two solutions of \eqref{def:mainSDE} as follows.
On some probability space, consider random vectors $(X^i_0,\overline{X}^i_0)$ distributed according to an optimal coupling for $\W_2^2(\mu_0^i,\mu_*^i)$, and take them to be independent across $i=1,\ldots,n$. Let $\bm{X}_0=(X^1_0,\ldots,X^n_0)$ and $\bm{\overline{X}}=(\overline{X}^1_0,\ldots,\overline{X}^n_0)$. Because $\mu_0$ and $\mu_*$ are product measures, it is straightforward to check that $(\bm{X}_0,\bm{\overline{X}}_0)$ is an optimal coupling for $\W_2^2(\mu_0,\mu_*)$.
Let $\bm{B}$ be a Brownian motion, independent of $(\bm{X}_0,\bm{\overline{X}}_0)$.
Let $(\F^i_t)_{t \ge 0}$ be the filtration generated by the process $(X^i_0,\overline{X}^i_0,B^i_t)_{t \ge 0}$.
In this probability space, let $\bm{X}$ and $\bm{\overline{X}}$ respectively denote the unique strong solutions of the SDE \eqref{def:mainSDE}, driven by the same Brownian motion $\bm{B}$, and initialized respectively from $\bm{X}_0$ and $\bm{\overline{X}}_0$. Note by Lemma \ref{le:MF-invariant} that $\bm{\overline{X}}_t \sim \mu_*$ for all $t > 0$.

Note that $\F^i_t$ and $\F^j_t$ are independent for $i \neq j$, by construction.
By strong well-posedness, $X^i$ is adapted to $(\F^i_t)$. By independence, 
\begin{equation*}
\E[\nabla_i f(\bm{X}_t)\,|\,X^i_t] = \int_{\R^{d(n-1)}} \nabla_if(X^i_t,\bm{x}^{-i})\,\mu_t^{-i}(d\bm{x}^{-i})  
= \E[\nabla_i f(\bm{X}_t)\,|\,\F^i_t].
\end{equation*}
Similarly with $\bm{\overline{X}}$ in place of $\bm{X}$. Then, by It\^o's formula,
\begin{align*}
d|X^i_t-\overline{X}^i_t|^2 &= 2(X^i_t-\overline{X}^i_t) \cdot \big(\E[\nabla_i f(\bm{X}_t)\,|\,X^i_t] - \E[\nabla_i f(\bm{\overline{X}}_t)\,|\,\overline{X}^i_t]\big)dt \\
	&= 2\E\Big[(X^i_t-\overline{X}^i_t) \cdot (\nabla_i f(\bm{X}_t) - \nabla_i f(\bm{\overline{X}}_t)) \,\Big|\,\F^i_t\Big].
\end{align*}
Integrate and use the tower property to get
\begin{align*}
\frac{d}{dt}\E|\bm{X}_t- \bm{\overline{X}}_t|^2 &= 2\sum_{i=1}^n \E\Big[(X^i_t-\overline{X}^i_t) \cdot (\nabla_i f(\bm{X}_t) - \nabla_i f(\bm{\overline{X}}_t)) \Big] \\
	&= 2\E\big[ (\bm{X}_t - \bm{\overline{X}}_t) \cdot (\nabla f(\bm{X}_t) - \nabla f(\bm{\overline{X}}_t))\big] \\
	&\le - 2 \kappa \E|\bm{X}_t- \bm{\overline{X}}_t|^2. 
\end{align*}
By Gr\"onwall's inequality and the fact that $(\bm{X}_0,\bm{\overline{X}}_0)$ is optimally coupled,
\begin{equation*}
\W_2^2(\mu_t,\mu_*) \le \E|\bm{X}_t- \bm{\overline{X}}_t|^2 \le e^{-2\kappa t}\E|\bm{X}_0- \bm{\overline{X}}_0|^2 = e^{-2\kappa t}\W_2^2(\mu_t,\mu_*). 
\end{equation*} 
{\ } \vskip-1.12cm \hfill\qedsymbol

\subsection{Proof of log-Sobolev inequality, Theorem \ref{th:LSI}} \label{se:LSI:proofs}

Fix $\mu_0 \in \P^{\otimes n}(\R^d)$ with $H(\mu_0\,|\,\rho_*) < \infty$. Let $\bm{X}$ be the solution of \eqref{def:mainSDE} with $\bm{X}_0 \sim \mu_0$, and let $\mu_t$ be the law of $\bm{X}_t$ for each $t > 0$.
Recall the computation \eqref{pf:entropycalculation}:
\begin{equation*}
\frac{d}{dt}\widetilde{H}(\mu_t\,|\,\rho_*) = \frac{d}{dt}H(\mu_t\,|\,\rho_*) = -\widetilde{I}(\mu_t\,|\,\rho_*) .
\end{equation*} 
Following the strategy of Bakry-\'Emery \cite{BakryEmery}, we will differentiate a second time and show that
\begin{equation}
\frac{d}{dt} \widetilde{I}(\mu_t\,|\,\rho_*) \le -2\kappa \widetilde{I}(\mu_t\,|\,\rho_*). \label{pf:BakryEmery}
\end{equation}
Using \eqref{pf:BakryEmery}, the proof concludes quickly:
By Gr\"onwall's inequality, $\widetilde{I}(\mu_t\,|\,\rho_*) \le e^{-2\kappa t}\widetilde{I}(\mu_0\,|\,\rho_*)$. 
Theorem \ref{th:convex-limit}(iii) and Theorem \ref{th:main-MF} together imply $\lim_{t\to\infty}\widetilde{H}(\mu_t\,|\,\rho_*) = 0$. Hence,
\begin{align*}
\widetilde{H}(\mu_0\,|\,\rho_*) = \int_0^\infty \widetilde{I}(\mu_t\,|\,\rho_*)\,dt \le \frac{1}{2\kappa}\widetilde{I}(\mu_0\,|\,\rho_*).
\end{align*}

To aid in the proof of \eqref{pf:BakryEmery} we introduce some shorthand.
Define a product measure $\nu_t:=\nu^1_t\otimes\cdots\otimes\nu^n_t$ by taking $\nu^i_t$ to have density proportional to $x^i \mapsto \exp \E[f(\bm{X}_t)\,|\,X^i_t=x^i]$. Note that $\bm{X}_t \sim \mu_t$ in the expectation here and below. This way,
\begin{align*}
\nabla_i\log\nu_t(\bm{X}_t)=\E[\nabla_i f(\bm{X}_t)\,|\,X^i_t] = \E[\nabla_i\log \rho_*(\bm{X}_t)\,|\,X^i_t].
\end{align*}
Abbreviate $h^i_t=\log\mu^i_t/\nu^i_t$ and $h_t=\log \mu_t/\nu_t$, and note crucially that because $\mu_t$ and $\nu_t$ are product measures, $h_t(\bm{x})=h^1_t(x^1)+\cdots+h^n_t(x^n)$ is additively separable. In particular,
\begin{align}
\nabla_i h_t(\bm{x}) = \nabla h_t^i(x^i) = \nabla\log\mu^i_t(x^i) - \E[\nabla_i\log \rho_*(\bm{X}_t)\,|\,X^i_t=x^i]. \label{pf:LSI-0}
\end{align}
Let us note also for later use that
\begin{align*}
\nabla^2_i h_t(\bm{x}) = \nabla^2\log\mu^i_t(x^i) - \E[\nabla_i^2\log \rho_*(\bm{X}_t)\,|\,X^i_t=x^i]. 
\end{align*}
With this notation, we may write  
\begin{equation}
\widetilde{I}(\mu_t\,|\,\rho_*) = \int_{\R^{dn}}|\nabla h_t|^2\,\mu_t.\label{pf:BakryEmery2}
\end{equation} 
The integral here and below is with respect to Lebesgue measure. 
We will differentiate the right-hand side of \eqref{pf:BakryEmery2}, proceeding again formally and omitting the fairly standard mollification arguments.
The Fokker-Planck equation \eqref{def:FokkerPlanck} (with $\bm{b}=\nabla f$) for $\mu$ can be written compactly as $\partial_t\mu = \nabla \cdot \big(\mu \nabla h)$
\begin{align*}
\partial_t\mu = \nabla \cdot \big(\mu \nabla h).
\end{align*}
Using this, we have
\begin{align}
\frac{d}{dt}\int_{\R^{dn}}|\nabla h_t|^2\mu_t &= 2\int_{\R^{dn}}\big[  \nabla h_t \cdot \partial_t \nabla h_t - \langle \nabla h_t, \nabla^2h_t \nabla h_t\rangle\big]\mu_t. \label{pf:LSI-1}
\end{align}
We simplify the first term of \eqref{pf:LSI-1} as follows. Note that
\begin{align}
\partial_t \nabla_i h_t = \partial_t \nabla \log \mu^i_t - \partial_t \E[\nabla_i f(\bm{X}_t)\,|\,X^i_t=\cdot]. \label{pf:LSI-2}
\end{align}
First, we have
\begin{align*}
\partial_t \nabla \log \mu_t &= \nabla \frac{\partial_t\mu_t}{\mu_t} = \nabla \bigg(\frac{1}{\mu_t} \nabla \cdot \big(\mu_t  \nabla h_t \big)\bigg) \\
	&= \nabla(\nabla \log \mu_t \cdot \nabla h_t) + \nabla \Delta h_t \\
	&= \nabla^2\log \mu_t \nabla h_t + \nabla^2 h_t \nabla\log\mu_t + \nabla \Delta h_t.
\end{align*}
By integration by parts,
\begin{align*}
\int_{\R^{dn}}(\nabla h_t \cdot\nabla \Delta h_t)\mu_t &= \sum_{i,j=1}^{dn}\int_{\R^{dn}}\partial_i h_t \partial_{ijj}h_t \mu_t \\
	&= -\sum_{i,j=1}^{dn}\int_{\R^{dn}}\partial_{ij} h_t \partial_{ij}h_t \mu_t  - \sum_{i,j=1}^{dn}\int_{\R^{dn}}\partial_i h_t \partial_{ij}h_t \partial_j \mu_t  \\
	&= -\int_{\R^{dn}}\|\nabla^2h_t\|_{\mathrm{Frob}}^2\mu_t - \int_{\R^{dn}}\langle \nabla h_t,\nabla^2h_t\nabla\log\mu_t\rangle\,\mu_t.
\end{align*}
Hence,
\begin{align}
\int_{\R^{dn}} \nabla h_t \cdot \partial_t\nabla\log \mu_t \,\mu_t &= \int_{\R^{dn}}\Big(\langle \nabla h_t,\nabla^2\log \mu_t \nabla h_t\rangle + \langle \nabla h_t,\nabla^2 h_t \nabla\log\mu_t\rangle \\
	&\qquad\qquad - \|\nabla^2h_t\|_{\mathrm{Frob}}^2 - \langle \nabla h_t,\nabla^2h_t\nabla\log\mu_t\rangle\Big)\mu_t \nonumber \\
		&= \int_{\R^{dn}}\Big(\langle \nabla h_t,\nabla^2\log \mu_t \nabla h_t\rangle - \|\nabla^2h_t\|_{\mathrm{Frob}}^2 \Big)\mu_t. \label{pf:LSI-3}
\end{align}
We next simplify the second term of \eqref{pf:LSI-2}, by computing
\begin{align*}
\partial_t \E[\nabla_i f(\bm{X}_t)\,|\,X^i_t=x^i] &=  \int_{\R^{d(n-1)}} \nabla_i f(x^i,\bm{x}^{-i})\,\partial_t \mu_t^{-i}(\bm{x}^{-i})\,d\bm{x}^{-i} \\
	&= \int_{\R^{d(n-1)}} \nabla_i f(x^i,\bm{x}^{-i})   \nabla_{-i} \cdot \big(\mu_t^{-i} \nabla_{-i}h_t\big)(\bm{x}^{-i}) \,d\bm{x}^{-i} \\
	&= - \sum_{j \neq i}\int_{\R^{d(n-1)}} \nabla_i\nabla_j^\top  f(x^i,\bm{x}^{-i}) \nabla_j^\top h_t(\bm{x}^{-i}) \,\mu_t^{-i}(\bm{x}^{-i})d\bm{x}^{-i},
\end{align*}
where $\nabla_{-i}$ denotes the gradient with respect to $(x^j)_{j \neq i}$.
Hence,
\begin{align}
\sum_{i=1}^n\E[\nabla_i h_t \cdot \partial_t \E[\nabla_i f(\bm{X}_t)\,|\,X^i_t]] &= - \sum_{i=1}^n\sum_{j \neq i}\int_{\R^{dn}} \langle \nabla_i h_t,\nabla_i\nabla_j^\top  f\nabla_j h_t\rangle \,\mu_t. \label{pf:LSI-4}
\end{align}
Plugging \eqref{pf:LSI-3} and \eqref{pf:LSI-4} into \eqref{pf:LSI-2}, we have
\begin{align*}
\int_{\R^{dn}}\nabla h_t \cdot \partial_t\nabla h_t\,\mu_t &= \int_{\R^{dn}}\Big(\langle \nabla h_t,\nabla^2\log \mu_t \nabla h_t\rangle - \|\nabla^2h_t\|_{\mathrm{Frob}}^2 + \sum_{i=1}^n\sum_{j \neq i}  \langle \nabla_i h_t,\nabla_i\nabla_j^\top f\nabla_j h_t\rangle\Big)\,\mu_t.
\end{align*}
Noting that $\nabla^2h_t=\nabla^2\log\mu_t - \nabla^2\log\nu_t$, we thus return to \eqref{pf:LSI-1} to find
\begin{align*}
\frac{d}{dt}\int_{\R^{dn}}|\nabla h_t|^2\mu_t &= 2\int_{\R^{dn}}\Big(  \langle \nabla h_t, \nabla^2\log\nu_t \nabla h_t\rangle - \|\nabla^2h_t\|_{\mathrm{Frob}}^2 + \sum_{i=1}^n\sum_{j \neq i}  \langle \nabla_i h_t,\nabla_i\nabla_j^\top f\nabla_j h_t\rangle\Big)\mu_t.
\end{align*}
By definition of $\nu_t$, the matrix $\nabla^2\log \nu_t(\bm{x})$ is block-diagonal with $i$th block given by $\E[\nabla_i^2f(\bm{X}_t)\,|\,X^i_t=x^i]$. Recalling \eqref{pf:LSI-0}, we find that
\begin{align*}
\int_{\R^{dn}} \langle \nabla h_t, \nabla^2\log \nu_t \nabla h_t\rangle  \,\mu_t &= \sum_{i=1}^n\E\big[\langle \nabla h^i_t(X^i_t), \E[\nabla_i^2f(\bm{X}_t)\,|\,X^i_t] \nabla h^i_t(X^i_t)\rangle\big] \\
	&= \sum_{i=1}^n\E\big[\langle \nabla h^i_t(X^i_t),  \nabla_{i}^2f(\bm{X}_t)  \nabla h^i_t(X^i_t)\rangle\big].
\end{align*}
Hence,
\begin{align*}
\frac{d}{dt}\int_{\R^{dn}}|\nabla h_t|^2\mu_t &= 2\int_{\R^{dn}}\Big(   - \|\nabla^2h_t\|_{\mathrm{Frob}}^2 + \sum_{i=1}^n\sum_{j=1}^n  \langle \nabla_i h_t\nabla_i\nabla_j f,\nabla_j h_t\rangle\Big)\mu_t \\
	&= 2\int_{\R^{dn}}\Big(   - \|\nabla^2h_t\|_{\mathrm{Frob}}^2 +  \langle \nabla h_t, \nabla^2 f\nabla h_t\rangle\Big)\mu_t.
\end{align*}
Discard the negative term and use the assumed concavity $\nabla^2 f \le -\kappa I$ to get
\begin{align*}
\frac{d}{dt}\int_{\R^{dn}}|\nabla h_t|^2\mu_t &\le - 2\kappa \int_{\R^{dn}}|\nabla h_t|^2\mu_t,
\end{align*}
which  proves \eqref{pf:BakryEmery} because of the identity \eqref{pf:BakryEmery2}. \hfill\qedsymbol

\section{Proof of Theorem \ref{th:entropicoptimality} on entropic optimality} \label{se:proof:entropicoptimality}

In this section, we work with $n$ copies of the canonical space $\Omega^i = C(\R_+;\R^d)$, for $i=1,\ldots,n$. On $\Omega^i$ we let $X^i$ denote the coordinate process, $X^i_t(\omega)=\omega_t$, and we let $\FF^i=(\F^i_t)_{t \ge 0}$ denote its filtration. Let $\Omega=\Omega^1 \times \cdots \times \Omega^n$ denote the product space, with each process $X^i$ and filtration $\FF^i$ lifting to $\Omega$ in the obvious manner. Let $\bm{X}=(X^1,\ldots,X^n)$.

We begin by proving \eqref{eq:entopt-identity}. The entropy identity of Lemma \ref{le:entropy-pathspace} shows that
\begin{equation*}
H(\mu[t]\,|\,\rho[t]) = H(\mu_0\,|\,\rho_0) + \frac14 \int_0^t \sum_{i=1}^n\E_\mu\big|\E_\mu[b^i(\bm{X}_s)\,|\,X^i_s]- b^i(\bm{X}_s)\big|^2\,ds.
\end{equation*}
Indeed, the condition \eqref{asmp:entropy-girsanov} for Lemma \ref{le:entropy-pathspace} holds here because $\nabla f$ has linear growth (being Lipschitz) and because $\mu_t$ and $\rho_t$ have bounded second moments for bounded time sets (see Proposition \ref{pr:wellposed}).
The quantity
\begin{align*}
\E_\mu\big|\E_\mu[b^i(\bm{X}_s)\,|\,X^i_s]- b^i(\bm{X}_s)\big|^2
\end{align*}
is finite for each $s$ by square-integrability of $\bm{X}_s$, and we will see later that it depends continuously on $s$. This implies
\begin{equation*}
\H'_0(\mu\,|\,\rho) = \lim_{t \downarrow 0}\frac{H(\mu[t]\,|\,\rho[t]) - H(\mu_0\,|\,\rho_0)}{t} = \frac14 \sum_{i=1}^n\E_\mu\big|\E_\mu[b^i(\bm{X}_0)\,|\,X^i_0]- b^i(\bm{X}_0)\big|^2,
\end{equation*}
which proves \eqref{eq:entopt-identity}.

To prove \eqref{ineq:entopt}, fix $\nu \in \P^{\otimes n}(C(\R_+;\R^d))$ with $\nu_0=\mu_0$. Let $\nu^i \in \P(C(\R_+;\R^d))$ denote the $i$th marginal for each $i$. Similarly, let $\rho^i \in \P(C(\R_+;\R^d))$ denote the $i$th marginal of $\rho$.

If $H(\nu[t]\,|\,\rho[t]) =\infty$ for all $t > 0$, then trivially $\H'_0(\nu\,|\,\rho) = \infty$, and there is nothing to prove. We may thus assume that $H(\nu[T]\,|\,\rho[T]) < \infty$ for some $T > 0$. It follows that $H(\nu[t]\,|\,\rho[t]) \le H(\nu[T]\,|\,\rho[T]) < \infty$ for all $t \in [0,T]$.
In addition, we have $H(\nu^i[t]\,|\,\rho^i[t]) < \infty$ for all $i$.

We next apply Girsanov's theorem, in the form of \cite{leonard2012girsanov} which is particularly well-suited to our needs, as it is designed under a finite-entropy assumption rather than standard but restrictive conditions like Novikov's. Indeed, the finite entropy $H(\nu^i[T]\,|\,\rho^i[T]) < \infty$ ensures by \cite[Theorem 2.1]{leonard2012girsanov} that there exists a $\FF^i$-progressively measurable process $\beta^i$ on $\Omega^i$, square-integrable under $dt \otimes d\nu^i$, such that 
\begin{align*}
X^i_t = \beta^i_tdt + \sqrt{2} \,d\widetilde{B}^i_t, \quad t \in (0,T),
\end{align*}
where $\widetilde{B}^i$ is a $\FF^i$-Brownian motion under $\nu^i$.
We may again lift the processes $\beta^i$ and $\widetilde{B}^i$ to the product space $\Omega$.
Using the entropy identity of \cite[Theorem 2.3]{leonard2012girsanov}, we have
\begin{align*}
H(\nu[t]\,|\,\rho[t]) = H(\nu_0\,|\,\rho_0) + \frac14 \int_0^t \sum_{i=1}^n\E_\nu\big|\beta^i_s - b^i(\bm{X}_s)\big|^2\,ds.
\end{align*}
Because $\beta^i_s$ is $\F^i_s$-measurable, we have
\begin{align}
\E_\nu\big|\beta^i_s - b^i(\bm{X}_s)\big|^2 &= \E_\nu\big|\beta^i_s - \E_\nu[b^i(\bm{X}_s)\,|\,\F^i_s]\big|^2 + \E_\nu\big|\E_\nu[b^i(\bm{X}_s)\,|\,\F^i_s] - b^i(\bm{X}_s)\big|^2 \nonumber \\
	&\ge \E_\nu\big|\E_\nu[b^i(\bm{X}_s)\,|\,\F^i_s] - b^i(\bm{X}_s)\big|^2. \label{pf:entopt-ineq1}
\end{align}
Combining the last two displays,
\begin{align*}
\H'_0(\nu\,|\,\rho) &\ge \liminf_{t \downarrow 0} \frac{1}{4t} \int_0^t \sum_{i=1}^n\E_\nu\big|\E_\nu[b^i(\bm{X}_s)\,|\,\F^i_s] - b^i(\bm{X}_s)\big|^2\,ds.
\end{align*}
If we show that the right-hand side of \eqref{pf:entopt-ineq1} is continuous in $s$, then, recalling that $\nu_0=\mu_0$, we deduce
\begin{align*}
\H'_0(\nu\,|\,\rho) &\ge \frac14 \sum_{i=1}^n\E_\mu\big|\E_\mu[b^i(\bm{X}_0)\,|\,X^i_0] - b^i(\bm{X}_0)\big|^2.
\end{align*}
Combined with \eqref{eq:entopt-identity}, this proves \eqref{ineq:entopt}.

It remains to show that the right-hand side of \eqref{pf:entopt-ineq1} is continuous in $s$, which was also used earlier in the proof in the case $\nu=\mu$. To prove this, note that the square-integrability of $\beta^i$ easily implies $\E_\nu|X^i_s-X^i_t|^2 \to 0$ as $t \to s$. Since $b^i$ is Lipschitz, it follows that $\E_\nu|b^i(\bm{X}_s)-b^i(\bm{X}_t)|^2 \to 0$ as $t \to s$. Note by independence that
\begin{align*}
\E_\nu[b^i(\bm{X}_s)\,|\,\F^i_s] &= \E\big[b^i(x^i,\bm{X}^{-i}_s)\big]\big|_{x^i=X^i_s} = \E_\nu[b^i(\bm{X}_s)\,|\,\F^i_t],
\end{align*}
which easily implies a similar $L^2$-continuity for $s\mapsto \E_\nu[b^i(\bm{X}_s)\,|\,\F^i_s]$. \hfill \qedsymbol

\begin{remark} \label{re:equalitycase}
The equality cases of \eqref{ineq:entopt} are easily identified from the above proof:  In order to have equality, it must be the case that the inequality \eqref{pf:entopt-ineq1} collapses to equality in the small-$t$ limit. That is, equality holds in \eqref{pf:entopt-ineq1} if and only if
\begin{align*}
\liminf_{t\downarrow 0} \frac{1}{t}\int_0^t  \E_\nu\big|\beta^i_s - \E_\nu[b^i(\bm{X}_s)\,|\,\F^i_s]\big|^2\,ds = 0 , \qquad \forall i=1,\ldots,n.
\end{align*}
In the case that $s \mapsto \beta^i_s \in L^2(\nu)$ is continuous at $s=0$, this implies that $\beta^i_0=\E_\mu[b^i(\bm{X}_0)\,|\,X^i_0]$ a.s.
\end{remark}

\section{Proofs for the proximity of the independent projection}

\subsection{Proof of Theorem \ref{th:proximity}}

A standard entropy identity computes the path-space relative entropy $H(\mu[T]\,|\,\rho[T])$ in terms of the squared $L^2(\mu[T])$-norm between their drifts, plus the time-zero entropy; see Lemma \ref{le:entropy-pathspace}. In our setting, this yields
\begin{align*}
H(\mu[T]\,|\,\rho[T]) &= H(\mu_0\,|\,\rho_0) + \frac14\int_0^T\sum_{i=1}^n\E\big| b^i(\bm{X}_t) - \E[b^i(\bm{X}_t)\,|\,X^i_t]\big|^2dt.
\end{align*}
This can be rewritten as a conditional variance, where the variance of a random vector is defined as the sum of the variances of the coordinates:
\begin{align*}
H(\mu[T]\,|\,\rho[T]) &= H(\mu_0\,|\,\rho_0) + \frac14\int_0^T\sum_{i=1}^n\E\,\Var(b^i(\bm{X}_t)\,|\,X^i_t)dt.
\end{align*}
It is shown in \cite[Theorem 4.2]{CattiauxGuillin} that $\mu^i_t$, as the law of an SDE with $L$-Lipschitz drift, satisfies a Poincar\'e inequality with constant $c_t$, for each $i$. The product measure $\mu^{-i}_t$ thus satisfies the Poincar\'e inequality with the same constant, and the claim \eqref{ineq:proximity1} follows. \hfill \qedsymbol

\subsection{Proof of Theorem \ref{th:proximity-uniform}}

We make use of a well known calculation of the time-derivative of the relative entropy between solutions of two different Fokker-Planck equations, similar to that of Section \ref{se:longtime:proofs}. Formally, if one is given solutions $\nu^i$ of Fokker-Planck equations $\partial_t\nu^i = -\mathrm{div}(\nu^ib^i) + \Delta \nu^i$ with different velocity fields $b^i$, for $i=1,2$, then
\begin{equation*}
\frac{d}{dt}H(\nu^1_t\,|\,\nu^2_t) = \int \bigg[ (b^1 - b^2) \cdot \nabla \log \frac{d\nu^1_t}{d\nu^2_t} - \Big|\nabla \log \frac{d\nu^1_t}{d\nu^2_t}\Big|^2\bigg]\,d\nu^1_t.
\end{equation*}
Using Young's inequality and integrating, this implies for any $t > s \ge 0$ that
\begin{equation}
H(\nu^1_t\,|\,\nu^2_t) + \frac{1}{2}\int_s^t I(\nu^1_u\,|\,\nu^2_u) \,du \le H(\nu^1_s\,|\,\nu^2_s) + \frac{1}{2}\int_s^t \int |b^1_u-b^2_u|^2\,d\nu^1_u  \,du, \label{pf:proximity2-entropy}
\end{equation}
where we recall that relative Fisher information $I$ was defined in \eqref{def:FisherInfo}.
The inequality \eqref{pf:proximity2-entropy} can be made rigorous and general more easily than the preceding identity, via mollification and lower semicontinuity arguments; see \cite[Lemma 2.4]{Bogachev2016} for a version under the sole assumption that $(b^1,b^2)$ are measurable and locally bounded, or the discussion around \cite[Lemma 3.1]{LackerLeFlem}.
In our context, \eqref{pf:proximity2-entropy} specializes to
\begin{equation*}
 H(\mu_t\,|\,\rho_t) + \frac{1}{2}\int_s^t I(\mu_u\,|\,\rho_u)\,du \le H(\mu_s\,|\,\rho_s) + \frac{1}{2}\sum_{i=1}^n\int_s^t \E\big[ \big|\E[\nabla_i f(\bm{X}_u)\,|\,X^i_u] - \nabla_i f(\bm{X}_u)\big|^2\big]\,du,
\end{equation*}
Recall that the measure flow $(\rho_u)_{u \ge 0}$ arises from the Markov process with infinitesimal generator $\varphi \mapsto \nabla f\cdot \nabla \varphi + \Delta \varphi$. The $\kappa$-concavity of $f$ ensures that $\rho_t$ thus satisfies log-Sobolev inequality $H(\cdot\,|\,\rho_t) \le (1/ 2\eta)I(\cdot\,|\,\rho_t)$, where $\eta=\min(\kappa,\eta_0)$.
Indeed, this variant of the famous Bakry-\'Emery result may be found in \cite[Corollary 3.7]{malrieu2001logarithmic}, after noting that
\[
1/2\eta = \max(1/ 2\kappa,1/2\eta_0 ) \ge \frac{1}{2\kappa}(1-e^{-2\kappa t}) + \frac{1}{2\eta_0}e^{-2\kappa t},
\]  
where the right-hand side is the sharper (but time-dependent) constant given by \cite[Corollary 3.7]{malrieu2001logarithmic}.
Hence,  
\begin{align*}
 H(\mu_t\,|\,\rho_t) + \eta \int_s^tH(\mu_u\,|\,\rho_u)\,du &\le H(\mu_s\,|\,\rho_s) + \frac{1}{2}\sum_{i=1}^n\int_s^t \E\big[ \big|\nabla_i f(\bm{X}_u) - \E[\nabla_i f(\bm{X}_u)\,|\,X^i_u]\big|^2\big]\,du.
\end{align*}
The drift $b^i(u,x) := \E[\nabla_i f(\bm{X}_u)\,|\,X^i_u=x]$ of the $i$th component of the independent projection satisfies the dissipativity property
\begin{align*}
\big(b^i(u,y)-b^i(u,z)\big) \cdot (y-z) &= \int_{(\R^d)^{n-1}} \big(\nabla_i f(y,\bm{x}^{-i}) - \nabla_i f(z,\bm{x}^{-i})\big) \cdot (y-z) \,\mu_u^{-i}(d\bm{x}^{-i}) \le -\kappa |y-z|^2,
\end{align*}
for $y, z \in \R^d$.
It follows from \cite[Theorem 4.2]{CattiauxGuillin} that $\mu^i_t$  satisfies a Poincar\'e inequality with constant 
\[
e^{-2\kappa t}c_0 + \frac{1}{ \kappa}(1-e^{-2\kappa t}) \le \max(c_0,1/\kappa) = c.
\]
This is true for each $i$, and in particular $\mu_t^{-i}$ satisies a Poincar\'e inequality with the same constant.
Hence,
\begin{align*}
 H(\mu_t\,|\,\rho_t) + \eta \int_s^tH(\mu_u\,|\,\rho_u)\,du &\le H(\mu_s\,|\,\rho_s) + \frac{c}{2}\sum_{i=1}^n\sum_{j \neq i}\int_s^t \E\big[ \big|\nabla_{ij} f(\bm{X}_u)\big|^2\big]\,du.
\end{align*}
Apply Gronwall's inequality (precisely, in the form of \cite[Lemma B.1]{LackerLeFlem}) to deduce
\begin{align*}
H(\mu_t\,|\,\rho_t) &\le e^{- \eta  t }H(\mu_0\,|\,\rho_0) + \frac{c}{2}\sum_{i=1}^n\sum_{j \neq i}\int_0^te^{-  \eta (t-s) }\E\big[ \big|\nabla_{ij} f(\bm{X}_s)\big|^2\big]\,ds. 
\end{align*}
{ \ } \vskip-1.2cm \hfill \qedsymbol

\subsection{Proof of Corollary \ref{co:propchaos-uniform}}

The evenness of $V$ and symmetry of $A$ imply that $\nabla_{ij}f(\bm{x}) =  A_{ij}\nabla^2 V(x^i-x^j)$, and it follows from the nonnegativity of the entries of $A$  that the function $f$ is $\kappa$-concave.
As noted just above in the proof of Theorem \ref{th:proximity-uniform}, $\rho_t$ satisfies the LSI with constant $\eta$, for all $t \ge 0$. It was shown by Otto and Villani \cite{otto2000generalization} this this implies the transport inequality
\begin{equation*}
\W_2^2(\nu,\rho_t) \le \frac{2}{\eta}H(\nu\,|\,\rho_t), \quad \forall \nu \in \P((\R^d)^n).
\end{equation*}
Combined with the subadditivity inequality \eqref{def:subadditivity}, this yields the first claimed inequality. To prove the second, we simply apply Theorem \ref{th:proximity-uniform}. \hfill \qedsymbol

\bibliographystyle{amsplain}

\bibliography{biblio}
\end{document}